 \tikzset{
  % style to apply some styles to each segment of a path
  on each segment/.style={
    decorate,
    decoration={
      show path construction,
      moveto code={},
      lineto code={
        \path [#1]
        (\tikzinputsegmentfirst) -- (\tikzinputsegmentlast);
      },
      curveto code={
        \path [#1] (\tikzinputsegmentfirst)
        .. controls
        (\tikzinputsegmentsupporta) and (\tikzinputsegmentsupportb)
        ..
        (\tikzinputsegmentlast);
      },
      closepath code={
        \path [#1]
        (\tikzinputsegmentfirst) -- (\tikzinputsegmentlast);
      },
    },
  },
  % style to add an arrow in the middle of a path
  mid arrow/.style={postaction={decorate,decoration={
        markings,
        mark=at position 0.6 with {\arrow[#1]{stealth}} % 可根据具体情况调整数字0.6
      }}},
}
\date{}
\renewcommand{\uppercasenonmath}[1]{}
\numberwithin{equation}{section} \theoremstyle{plain}
\newtheorem{theorem}{Theorem}[section]
\newtheorem{corollary}[theorem]{Corollary}
\newtheorem{lemma}[theorem]{Lemma}
\newtheorem{proposition}[theorem]{Proposition}
\theoremstyle{definition}
\newtheorem{definition}[theorem]{Definition}
\newtheorem{example}[theorem]{Example}
\newtheorem{remark}[theorem]{Remark}
\newtheorem*{ack*}{ACKNOWLEDGEMENTS}
\newcommand{\oo}{\otimes}
\newcommand{\pf}{\noindent\begin {proof}}
\newcommand{\epf}{\end{proof}}
\newcommand{\ra}{\rightarrow}
\newcommand{\Hom}{\mbox{\rm Hom}}
\newcommand{\im}{\mbox{\rm Im}}
\newcommand{\coker}{\mbox{\rm Coker}}
\newcommand{\ke}{\mbox{\rm Ker}}
\def\alg{\mathit{\Lambda}}
\def\pd{\mathsf{pd}}
\newcommand{\s}{\Omega}
	\newcommand{\BMod}{B\text{-}\mathrm{Mod}}
	\newcommand{\modA}{\mathrm{mod}\text{-}A}
	\newcommand{\modB}{\mathrm{mod}\text{-}B}
	\newcommand{\modL}{\mathrm{mod}\text{-}\alg}
    \newcommand{\modLe}{\mathrm{mod}\text{-}\alg/\alg e\alg}
    \newcommand{\modeLe}{\mathrm{mod}\text{-}e\alg e}
    \newcommand{\I}{\mathrm{Id}_{M}\oo}
	\def\add{\mathop{\rm add}\nolimits}
	\def\IT{\mathop{\rm IT.dist}\nolimits}
\begin{document}
\begin{center}
{{\bf\large
Igusa--Todorov properties of recollements of abelian categories\footnote{
Yu-Zhe Liu was partly supported by
the Scientic Research Foundation of Guizhou University (Grant Nos. [2022]53, [2022]65, [2023]16),
the Guizhou Provincial Basic Research Program (Natural Science) (Grant Nos. ZK[2024]YiBan066 and ZD[2025]085),
and the National Natural Science Foundation of China (Grant Nos. 12401042 and 12171207).
}}

\vspace{0.5cm}   Peiru Yang, Yajun Ma\footnote{{\color{white}\hspace{1mm}} Corresponding author.}, Yu-Zhe Liu}

\end{center}

$$\bf  Abstract$$
%\bigskip
\leftskip0truemm \rightskip0truemm \noindent
In this paper, we investigate the behavior of Igusa--Todorov properties under recollements of abelian categories. In particular, we study how the Igusa--Todorov distances of the categories involved in a recollement are related.
Applications are given to Artin algebras, especially to Morita context rings.
\leftskip10truemm \rightskip10truemm \noindent
\\[2mm]
{\bf Keywords:} Recollements of abelian categories; Igusa--Todorov distance; $(m,n)$-Igusa--Todorov algebras; Morita context rings.\\
{\bf 2020 Mathematics Subject Classification:} 18G80, 18G25, 16E30.

\leftskip0truemm \rightskip0truemm
\section{ \bf Introduction}

Let $\alg$ be an Artin algebra and let $\modL$ denote the category of finitely generated right $\alg$-modules.
Recall that the finitistic dimension $\mathsf{fin.dim}\alg$ of $\alg$ is defined as the supremum of the projective dimension of all finitely generated right $\alg$-modules of finite projective dimension.
The finitistic dimension conjecture asserts that $\mathsf{fin.dim}\alg<\infty$ for any Artin algebra $\alg.$

There esists a variety of literature on the study of the finitistic dimension of Artin algebras. Recently, Xi in \cite{xi2004finitistic,xi2006finitistic,xi2008finitistic}
developed various methods to detect finiteness of the finitistic dimension using Igusa--Todorov function introduced in \cite{IT}.
Based on Xi's work, Wei  introduced the concept of Igusa--Todorov algebras, and proved that the finitistic dimension conjecture holds for all Igusa--Todorov algebras in \cite{W}.
In particular, this class of algebras contains algebras with radical cube zero, monomial algebras, and algebras of representation dimension at most 3 among others.
However, T. Conde \cite{Conde} showed that not all Artin algebras are Igusa--Todorov algebras.
In order to measure how far an algebra is from being an Igusa--Todorov algebra, Zhang and Zheng \cite{ZZ1} introduced the concept of the Igusa--Todorov distance of Artin algebras, and studied its behavior under stable equivalences, singular equivalence of Morita type with level and recollements of derived module categories. Moreover, they proved that this distance is not only finite but also can bound the dimension of the singularity category.
On the other hand, Zheng and Huang \cite{Zheng-huang2022} gave an upper bound for the dimension of bounded derived categories of $n$-Igusa--Todorov algebras in terms of $n.$
To further study the dimension of derived categories, Zheng \cite{Z} introduced $(m,n)$-Igusa--Todorov algebras as a generalization of $n$-Igusa--Todorov algebras, and derived an upper bound for the dimension of bounded derived categories of $(m,n)$-Igusa--Todorov algebras in terms of $m$ and $n.$
We should note that the Igusa--Todorov distance of an Artin algebra is defined in terms of the concept of $(m,n)$-Igusa--Todorov algebra for some nonnegative integers $m$ and $n;$ see \cite[Definition 3.3]{ZZ1}.
Recently, Ma, Zhang and Liu \cite{MZL} also investigated Igusa--Todorov distances and the dimesion of bounded derived categories under the cleft extensions of abelian categories.

Recollements of abelian categories are exact sequences of abelian categories
$$0 \ra \mathscr{A}\xrightarrow{\sf i} \mathscr{B}\xrightarrow{\sf e}\mathscr{C}\ra 0$$
where both the inclusion functor $\mathsf{i}:\mathscr{A}\ra \mathscr{B}$ and the quotient functor $\mathsf {e}:\mathscr{B}\ra \mathscr{C}$ have left and right adjoints.
Recollements have been introduced first in the context of triangulated categories by Beilinson, Bernstein and Deligne \cite{B}, in their axiomatization of the Grothendieck's six functors for derived categories of sheaves obtained from the stratification of spaces.
It should be noted that recollements of abelian categories appear quite naturally in various settings and are omnipresent in representation theory, providing a proper framework for investigating the homological connections among these categories.
For example, Psaroudakis \cite{P} investigated homological properties of recollements of abelian categories, and then gave a series of bounds among the global, finitistic and representation dimensions of abelian categories linked by such recollements of abelian categories; Psaroudakis, Skarts{\ae}terhagen and Solberg \cite{PSS} studied singularity equivalences, Gorensteinness, and finite generation conditions in recollements of abelian categories; Zhang and Zhu \cite{Zhang} investigated the Gorenstein global dimension in recollements of abelian categories.

Inspired by this, the main purpose of this paper is to investigate the behavior of Igusa--Todorov properties under recollements of abelian categories.

Let $m$ and $n$ be nonnegative integers.
In order to study $(m,n)$-Igusa--Todorov algebras and Igusa--Todorov distances within a more general framework, Ma, Zheng and Liu introduced the concept of $(m,n)$-Igusa--Todorov categories in \cite{MZL}.
Let $\mathscr{A}$ be an abelian category with enough projectives.
Recall that abelian category $\mathscr{A}$ is called an $(m,n)$-Igusa--Todorov category if there exists an object $U\in \mathscr{A}$ such that for any $n$-syzygy object $A$, there exists an exact sequence
	$$0\ra U_{m}\ra U_{m-1}\ra\cdots\ra U_{1}\ra U_{0}\ra A\ra 0,$$
	with $U_{i}\in \add (U)$ for $0\leq i\leq m$.
 Let $\alg$ be an Artin algebra, and let $\mathrm{mod}$-$\alg$ be the category of finitely generated right $\alg$-modules.
 We note that $\modL$ is an $(m,n)$-Igusa--Todorov category if and only if $\alg$ is an $(m,n)$-Igusa--Todorov algebra in the sense of \cite[Definition 2.1]{Z}.
Our first result is the following, which is  listed as Theorem \ref{main theorem}.

\begin{theorem}\label{thm0}
	Let the following diagram be a recollement of abelian categories with $\mathsf{Proj}\mathscr{B}=\add(P)$ for some projective object $P$.
	$$\scalebox{0.85}{\xymatrixcolsep{2.0pc}\xymatrix{
			\mathscr{A}\ar[rr]^{\mathsf{i}} && \mathscr{B}\ar@/_2.0pc/[ll]_{\sf q}\ar@/^2.0pc/[ll]^{\sf p}\ar[rr]^{\sf e} && \mathscr{C} \ar@/_2.0pc/[ll]_{\sf l}\ar@/^2.0pc/[ll]^{\sf r} } }$$
Assume that $\mathscr{A}$ is an $(m,r)$-Igusa--Todorov category and $\mathscr{C}$ is an $(n,s)$-Igusa--Todorov category.
Then the following statements hold.
\begin{enumerate}
\item If both $\sf l$ and $\sf p$ are exact, then $\mathscr{B}$ is a $(2m+n+2,\max\{r,s\})$-Igusa--Todorov category.
\item If $\sf l, p$ and $\sf q$ are exact, then $\mathscr{B}$ is an $(m+n+1,\max\{r,s\})$-Igusa--Todorov category.
\end{enumerate}
\end{theorem}

We note that the concept of the Igusa--Todorov distance is defined in terms of $(m,n)$-Igusa--Todorov category for some nonnegative integers $m$ and $n$; see Definition \ref{def:IT distance}.
To further state our results precisely, we recall from \cite{P} the notion of the $\mathscr{A}$-relative global dimension of $\mathscr{B}$ , which is defined by
$$\mathsf{gl.dim}_{\mathscr{A}}\mathscr{B}:=\sup\{\mathsf{pd}_{\mathscr{B}}\mathsf{i}(A)\mid A\in \mathscr{A}\}.$$
Applying Theorem \ref{thm1} and Corollary \ref{corollary:IT}, we obtain a bound for the Igusa--Todorov distance of $\mathscr{B}$ in terms of the Igusa--Todorov distances of $\mathscr{A}$ and $\mathscr{C}$; see Corollary \ref{corollary:main}.

\begin{theorem}\label{thm2}
Let the following diagram be a recollement of abelian categories with $\mathsf{Proj}\mathscr{B}=\add(P)$ for some projective object $P$.
	$$\scalebox{0.85}{\xymatrixcolsep{2.0pc}\xymatrix{
			\mathscr{A}\ar[rr]^{\mathsf{i}} && \mathscr{B}\ar@/_2.0pc/[ll]_{\sf q}\ar@/^2.0pc/[ll]^{\sf p}\ar[rr]^{\sf e} && \mathscr{C} \ar@/_2.0pc/[ll]_{\sf l}\ar@/^2.0pc/[ll]^{\sf r} } }$$
 Assume that $\mathsf{l}$ is exact.
\begin{enumerate}
\item If $\mathsf{gl.dim}_{\mathscr{A}}\mathscr{B}<\infty,$ then
$$\IT(\mathscr{C})\leq\IT(\mathscr{B})\leq\IT(\mathscr{C})+2.$$
Moreover, if $\sf q$ is exact, then $$\IT(\mathscr{C})\leq\IT(\mathscr{B})\leq\IT(\mathscr{C})+1.$$
\item If  $\mathsf{p}$ is exact, then
$$\IT(\mathscr{C})\leq\IT(\mathscr{B})\leq 2\IT(\mathscr{A})+\IT(\mathscr{C})+2.$$
Moreover, if $\mathsf{q}$ is exact, then
$$\IT(\mathscr{C})\leq\IT(\mathscr{B})\leq \IT(\mathscr{A})+\IT(\mathscr{C})+1.$$
\end{enumerate}
\end{theorem}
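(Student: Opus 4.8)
The plan is to deduce Theorem~\ref{thm2} from the structural results already in place --- Theorem~\ref{main theorem} (= Theorem~\ref{thm0}), its refinement Theorem~\ref{thm1}, and the monotonicity Corollary~\ref{corollary:IT} --- by reading off the first parameter of the resulting Igusa-Todorov structures. Write $a=\IT(\mathscr{A})$ and $c=\IT(\mathscr{C})$. By Definition~\ref{def:IT distance} there are nonnegative integers $r,s$ such that $\mathscr{A}$ is an $(a,r)$-Igusa-Todorov category and $\mathscr{C}$ is a $(c,s)$-Igusa-Todorov category. For the lower bound $\IT(\mathscr{C})\le\IT(\mathscr{B})$, which holds in both cases, I would argue as follows (this is Corollary~\ref{corollary:IT}): since $\mathsf{l}$ is exact and is a left adjoint of the exact quotient functor $\mathsf{e}$, it preserves projectives and projective resolutions, and $\mathsf{e}\mathsf{l}\cong\mathrm{Id}_{\mathscr{C}}$; hence if $C$ is an $n$-syzygy in $\mathscr{C}$ then $\mathsf{l}C$ is an $n$-syzygy in $\mathscr{B}$, and applying the exact functor $\mathsf{e}$ to a defining $\add$-resolution of $\mathsf{l}C$ furnished by the $(\IT(\mathscr{B}),n)$-Igusa-Todorov structure of $\mathscr{B}$ transports that structure down to $C$, with $\mathsf{e}U$ playing the role of the distinguished object.

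For part~(2), both $\mathsf{l}$ and $\mathsf{p}$ are exact, so Theorem~\ref{main theorem}(1) applies verbatim with $(m,r)$ and $(n,s)$ the witnesses above: $\mathscr{B}$ is a $(2a+c+2,\max\{r,s\})$-Igusa-Todorov category, whence $\IT(\mathscr{B})\le 2a+c+2=2\IT(\mathscr{A})+\IT(\mathscr{C})+2$. If in addition $\mathsf{q}$ is exact, Theorem~\ref{main theorem}(2) gives that $\mathscr{B}$ is an $(a+c+1,\max\{r,s\})$-Igusa-Todorov category, so $\IT(\mathscr{B})\le\IT(\mathscr{A})+\IT(\mathscr{C})+1$. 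Combining with the lower bound from the previous paragraph yields the two chains of inequalities in~(2).

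For part~(1) the exactness of $\mathsf{p}$ is no longer assumed, so Theorem~\ref{main theorem} cannot be invoked directly; here I would use Theorem~\ref{thm1}. The point is that when $\mathsf{gl.dim}_{\mathscr{A}}\mathscr{B}=d<\infty$, the $\mathscr{A}$-part appearing in any sufficiently high syzygy (in $\mathscr{B}$) of an object of $\mathscr{B}$ becomes projective, hence lies in $\add(P)$; this plays the role of the hypothesis ``$\mathscr{A}$ is an $(m,r)$-Igusa-Todorov category'' in the syzygy analysis underlying Theorem~\ref{main theorem}, but with the $\mathscr{A}$-contribution costing $0$ rather than $2m$ (resp.\ $m$) steps and with $d$ entering only the second parameter. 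Thus Theorem~\ref{thm1} gives that $\mathscr{B}$ is a $(c+2,\max\{d,s\})$-Igusa-Todorov category when only $\mathsf{l}$ is exact, and a $(c+1,\max\{d,s\})$-Igusa-Todorov category when $\mathsf{q}$ is moreover exact, so $\IT(\mathscr{B})\le\IT(\mathscr{C})+2$, resp.\ $\IT(\mathscr{B})\le\IT(\mathscr{C})+1$; together with the lower bound this proves~(1).

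The genuinely technical content is thus entirely in Theorem~\ref{thm1}: controlling the $\mathscr{A}$- and $\mathscr{C}$-components of the syzygies of an arbitrary object of $\mathscr{B}$ through the six recollement functors and the hypothesis $\mathsf{Proj}\,\mathscr{B}=\add(P)$, and verifying that the exactness of $\mathsf{l}$ (and, when available, of $\mathsf{p}$ and $\mathsf{q}$) is exactly what makes these components compatible with forming $\add$-resolutions. Once that is available, the proof of Theorem~\ref{thm2} is bookkeeping with the two parameters of the Igusa-Todorov structures and the definition of $\IT$; the only point requiring a little care is that the witnesses for $\mathscr{A}$ and $\mathscr{C}$ carry existential second parameters $r,s$ (and, in part~(1), the relative global dimension $d$), so the second parameter of the resulting structure on $\mathscr{B}$ must be taken to be the maximum of these and of the shift introduced by the recollement --- which is harmless, since $\IT$ records only the first parameter.
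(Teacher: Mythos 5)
Your proposal is correct and follows essentially the same route as the paper, which deduces the theorem by combining the lower bound (Proposition~\ref{prop2}), the upper bound via the relative global dimension (Corollary~\ref{corollary:IT}, built on Proposition~\ref{thm1}), and the upper bounds via the Igusa--Todorov data of $\mathscr{A}$ and $\mathscr{C}$ (Corollary~\ref{main theorem1}, built on Theorem~\ref{main theorem}). The only slip is attributional: the lower bound $\IT(\mathscr{C})\leq\IT(\mathscr{B})$ is Proposition~\ref{prop2}, not Corollary~\ref{corollary:IT} (which is the upper bound in part~(1)), though the argument you sketch for it --- transporting the $\add$-resolution of $\s^{k}(\mathsf{l}(C))$ down through the exact functor $\mathsf{e}$ using $\mathsf{el}\cong\mathrm{Id}_{\mathscr{C}}$ --- is exactly the paper's.
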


We mention that there exists the recollement of module categories  $(\modLe,\modL,\modeLe)$ for any idempotent $e=e^{2}$ in an Artin algebra $\alg$.
Following \cite[Section 8]{P}, we denote by $\mathsf{gl.dim}_{\alg/\alg e\alg}\alg=\sup\{\mathsf{pd}X_{\alg}\mid X\in \mathrm{mod}\text{-}\alg/\alg e\alg\}.$
Hence as an application of Theorems \ref{thm0} and \ref{thm2}, we have the following result.

\begin{corollary}
Let $\alg$ be an Artin algebra and let $e=e^{2}$ be an idempotent in $\alg.$ Assume that $e\alg$ is a projective left $e\alg e$-module. Then the following statements hold.
\begin{enumerate}
\item Assume that $\alg/\alg e\alg$ is an $(m,r)$-Igusa--Todorov algebra and $e\alg e$ is an $(n,s)$-Igusa--Todorov algebra. If $\alg/\alg e\alg$ is a projective right $\alg$-module, then $\alg$ is a $(2m+n+2,\max\{r,s\})$-Igusa--Todorov algebra.
\item Assume that $\alg/\alg e\alg$ is an $(m,r)$-Igusa--Todorov algebra and $e\alg e$ is an $(n,s)$-Igusa--Todorov algebra. If $\alg/\alg e\alg$ is a projective left $\alg$-module and $\alg/\alg e\alg$ is a projective right $\alg$-module, then $\alg$ is an $(m+n+1,\max\{r,s\})$-Igusa--Todorov algebra.
\item Assume that $\alg/\alg e\alg$  and $e\alg e$ are syzygy-finite algebras.  If $\alg/\alg e\alg$ is a projective left $\alg$-module and $\alg/\alg e\alg$ is a projective right $\alg$-module, then $\alg$ is an Igusa--Todorov algebra.
\item If $\mathsf{gl.dim}_{\alg/\alg e\alg}\alg<\infty$, then
$$\IT(e\alg e)\leq\IT(\alg)\leq\IT(e\alg e)+2.$$
Moreover, if $\alg/\alg e\alg$ is a projective left $\alg$-module, then $$\IT(e\alg e)\leq\IT(\alg)\leq\IT(e\alg e)+1.$$
\item If $\alg/\alg e\alg$ is a projective right $\alg $-module, then
$$\IT(e\alg e)\leq\IT(\alg)\leq 2\IT(\alg/\alg e\alg)+\IT(e\alg e)+2.$$
 Moreover, if $\alg/\alg e\alg$ is a projective left $\alg$-module, then
$$\IT(e\alg e)\leq\IT(\alg)\leq \IT(\alg/\alg e\alg)+\IT(e\alg e)+1.$$
\end{enumerate}
\end{corollary}

The paper is structured as follows. In Section 2, we introduce essential notations and results of recollements of abelian categories, and give some examples that will be used in the sequel to illustrate our results. Section 3 is dedicated to the proofs of Theorem 1.1 and Theorem 1.2. In Section 4, we explore applications of these results to Morita context rings, and recollements of module categories arising from pairs $(\alg,e)$ consisting of an Artin algebra $\alg$ and an idempotent $e\in \alg.$

\section{ \bf Preliminaries}
\subsection{Conventions}
All algebras considered in this paper are Artin algebras and all modules are finitely generated.
For an Artin algebra $\alg$, we usually work with right $\alg$-modules and write $\modL$ for the category of finitely generated right $\alg$-modules.

All abelian categories considered are assumed to
	have enough projectives. Let $\mathscr{A}$ be an abelian category.
We denote by $\mathsf{Proj} \mathscr{A}$ the full subcategory of $\mathscr{A}$ consisting of projective objects.
For an object $U$ of $\mathscr{A}$, we use $\add (U)$ to
denote the subcategory of $\mathscr{A}$ consisting of
direct summands of finite direct sums of $U$.
%Let $X$ be an object in $\mathscr{A}$.
%Then there is an epimorphism $f: P\longrightarrow X$ in $\mathscr{A}$ with $P$
%projective. The kernel of $f$ is called the syzygy of $X$ and is denoted by $\Omega_{\mathscr{A}}^{}(X)$.
%Additionally, define $\Omega_{\mathscr{A}}^{0}$ as the identity functor in $\mathscr{A}$ and $\Omega_{\mathscr{A}}^{1}:=\Omega_{\mathscr{A}}$.
%Inductively, for a subcategory $\mathcal{U}\subseteq \mathscr{A}$,
%we define
%$\Omega_{\mathscr{A}}^{1}(\mathcal{U}):=\{\Omega_{\mathscr{A}}^{1}(U)\,|\,U\in \mathcal{U}\} $,
%for any $i\geq 2$,
%$\Omega_{\mathscr{A}}^{i}(\mathcal{U})$ inductively to be the subcategory consisting of syzygies of objects in $\Omega_{\mathscr{A}}^{i-1}(\mathcal{U})$, i.e.,
%$\Omega_{\mathscr{A}}^{i}(\mathcal{U})=\Omega_{\mathscr{A}}(\Omega_{\mathscr{A}}^{i-1}\mathcal{U}).$
%We call $\Omega_{\mathscr{A}}^{i}(\mathcal{U})$ the $i$-th syzygy of $\mathcal{U}$. In particular
%we write $\Omega_{\mathscr{A}}^{n}(X):=\Omega_{\mathscr{A}}^{1}(\Omega_{\mathscr{A}}^{n-1}(X))$ for each positive integer $n.$

\subsection{Recollements of abelian categories}\label{Section2.2}
We first recall the definition of a recollement of abelian categories, which was deeply investigated by Psaroudakis (\cite{P}).

\begin{definition}(\cite[Definition 2.1]{P})
	A {\bf recollement situation} between abelian category $\mathscr{A},\mathscr{B}$ and $\mathscr{C}$ is a diagram
	$$\scalebox{0.95}{\xymatrixcolsep{3pc}\xymatrix{
			\mathscr{A}\ar[rr]^{\mathsf{i}} && \mathscr{B}\ar@/_2.0pc/[ll]_{\sf q}\ar@/^2.0pc/[ll]^{\sf p}\ar[rr]^{\sf e} && \mathscr{C} \ar@/_2.0pc/[ll]_{\sf l}\ar@/^2.0pc/[ll]^{\sf r} } }$$
	satisfying the following:
	\begin{enumerate}
		\item $(\sf l,\sf e,\sf r)$ is an adjoint triple.
		\item $(\sf q,\sf i,\sf p)$ is an adjoint triple.
		\item The functors $\sf i,\sf l$ and $\sf r$ are fully faithful.
        \item $\im \sf i=\ke \sf e$
	\end{enumerate}
In this case, $\mathscr{B}$ is said to be the recollement of $\mathscr{A}$ and $\mathscr{B}$ and we denote the recollement by $(\mathscr{A},\mathscr{B},\mathscr{C})$.
\end{definition}

 In the following lemma, we  collect some basic properties of a recollement situation of abelian categories. For more details, we refer to \cite{P,PSS}.

\begin{lemma}{\rm (\cite[Proposition 2.2]{PSS}) }\label{lem of recollement}
	Let $(\mathscr{A},\mathscr{B},\mathscr{C})$ be a recollement of abelian categories. The following statements hold:
\begin{enumerate}
\item The functors $\mathsf{e}:\mathscr{B}\ra \mathscr{C}$ and $\mathsf{i}:\mathscr{A}\ra \mathscr{B}$ are exact.
\item The compositions $\sf ei, ql$ and $\sf pr$ are zero.
\item The unit $\mathrm{Id}_{\mathscr{C}}\xrightarrow{} \mathsf{el}$  of the adjoint pair $(\mathsf{l},\mathsf{e})$ and the counit $\mathsf{qi}\xrightarrow{}\mathrm{Id}_{\mathscr{A}}$ of the adjoint
pair $(\sf q, \sf i)$ are invertible.

\item The functors $\mathsf{l}$ and $\mathsf{q}$ preserve projective objects.
\item Let $B\in \mathscr{B}$. Then we have the exact sequence
$$0\ra \ke \mu_{B}\ra \mathsf{le}(B)\xrightarrow{\mu_{B}} B\xrightarrow{\alg_{B}} \mathsf{iq}(B)\ra 0$$
with $\ke \mu_{B}\in \sf i(\mathscr{A})$, where $\mu:\mathsf{le}\ra \mathrm{Id}_{\mathscr{B}}$ is the counit of $(\mathsf{l},\mathsf{e})$ and $\alg:\mathrm{Id}_{\mathscr{B}}\ra \mathsf{iq}$ is the unit of $(\mathsf{q},\mathsf{i}).$
		
\item Assume that $\mathsf{q}:\mathscr{B}\ra \mathscr{A}$ is exact.
Then for any $B\in \mathscr{B}$, there is an exact sequence
		$$0\ra \mathsf{le}(B)\xrightarrow{\mu_{B}} B\xrightarrow{\alg_{B}} \mathsf{iq}(B)\ra 0.$$
\end{enumerate}
\end{lemma}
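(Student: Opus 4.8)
The plan is to treat the six assertions in order, getting the first four from formal properties of adjoint functors and then spending the real effort on (5), from which (6) follows immediately.

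For (1), I would invoke the standard facts that a left adjoint preserves colimits and a right adjoint preserves limits: since $\mathsf{i}$ has a left adjoint $\mathsf{q}$ and a right adjoint $\mathsf{p}$ it preserves both kernels and cokernels, hence is exact, and likewise $\mathsf{e}$ has left adjoint $\mathsf{l}$ and right adjoint $\mathsf{r}$, so it is exact. For (2), $\mathsf{ei}=0$ is immediate from $\im\mathsf{i}=\ke\mathsf{e}$; for $\mathsf{ql}$, I would compute, for $C\in\mathscr{C}$ and $A\in\mathscr{A}$, $\Hom_{\mathscr{A}}(\mathsf{ql}(C),A)\cong\Hom_{\mathscr{B}}(\mathsf{l}(C),\mathsf{i}(A))\cong\Hom_{\mathscr{C}}(C,\mathsf{ei}(A))=0$ using the two adjunctions and $\mathsf{ei}=0$, so $\mathsf{ql}(C)=0$ (its identity is zero), and symmetrically $\Hom_{\mathscr{A}}(A,\mathsf{pr}(C))\cong\Hom_{\mathscr{B}}(\mathsf{i}(A),\mathsf{r}(C))\cong\Hom_{\mathscr{C}}(\mathsf{ei}(A),C)=0$ gives $\mathsf{pr}=0$. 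For (3), I would quote the general principle that in an adjunction the left adjoint is fully faithful exactly when the unit is invertible and the right adjoint is fully faithful exactly when the counit is invertible: applied to $(\mathsf{l},\mathsf{e})$ with $\mathsf{l}$ fully faithful this gives the unit $\mathrm{Id}_{\mathscr{C}}\to\mathsf{el}$ invertible, and to $(\mathsf{q},\mathsf{i})$ with $\mathsf{i}$ fully faithful this gives the counit $\mathsf{qi}\to\mathrm{Id}_{\mathscr{A}}$ invertible. For (4), I would use that a left adjoint of an exact functor preserves projectives: for a projective $P\in\mathscr{C}$, $\Hom_{\mathscr{B}}(\mathsf{l}(P),-)\cong\Hom_{\mathscr{C}}(P,\mathsf{e}(-))$ is exact since $\mathsf{e}$ is exact by (1), so $\mathsf{l}(P)$ is projective; the same argument with $\mathsf{i}$ in place of $\mathsf{e}$ shows $\mathsf{q}$ preserves projectives.

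The substantive step is (5). Fix $B\in\mathscr{B}$, let $\mu\colon\mathsf{le}\to\mathrm{Id}_{\mathscr{B}}$ and $\lambda\colon\mathrm{Id}_{\mathscr{B}}\to\mathsf{iq}$ be as stated, and let $\eta\colon\mathrm{Id}_{\mathscr{C}}\to\mathsf{el}$ be the unit of $(\mathsf{l},\mathsf{e})$. The first move is to apply $\mathsf{e}$: the triangle identity $\mathsf{e}(\mu_B)\circ\eta_{\mathsf{e}(B)}=\mathrm{id}_{\mathsf{e}(B)}$ together with $\eta$ invertible (part (3)) shows $\mathsf{e}(\mu_B)$ is an isomorphism. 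Factoring $\mu_B$ as $\mathsf{le}(B)\twoheadrightarrow\im\mu_B\hookrightarrow B$ and using exactness of $\mathsf{e}$, I conclude that both $\ke\mu_B$ and $\coker\mu_B$ lie in $\ke\mathsf{e}=\im\mathsf{i}$; in particular $\ke\mu_B\in\mathsf{i}(\mathscr{A})$, which is one of the two claims. It remains to identify $\coker\mu_B$ with $\mathsf{iq}(B)$ via $\lambda_B$ and to check exactness at $B$ and surjectivity of $\lambda_B$. For this I would first note $\lambda_B\circ\mu_B=0$: by naturality of $\lambda$ this equals $\mathsf{iq}(\mu_B)\circ\lambda_{\mathsf{le}(B)}$, and $\mathsf{q}(\mu_B)$ has source $\mathsf{qle}(B)=(\mathsf{ql})\mathsf{e}(B)=0$ by (2). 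Hence $\lambda_B$ factors as $B\xrightarrow{p}\coker\mu_B\xrightarrow{\bar\lambda}\mathsf{iq}(B)$. Writing $\coker\mu_B\cong\mathsf{i}(A')$, applying the right-exact functor $\mathsf{q}$ to $\im\mu_B\hookrightarrow B\twoheadrightarrow\mathsf{i}(A')$ and using $\mathsf{q}(\im\mu_B)=0$ (it is a quotient of $\mathsf{qle}(B)=0$) gives that $\mathsf{q}(p)\colon\mathsf{q}(B)\to\mathsf{qi}(A')\cong A'$ is an isomorphism; then naturality of $\lambda$ on $p$ plus the triangle identity $\mathsf{i}(\epsilon_{A'})\circ\lambda_{\mathsf{i}(A')}=\mathrm{id}$ with $\epsilon\colon\mathsf{qi}\to\mathrm{Id}_{\mathscr{A}}$ invertible forces $\lambda_B$ to have the same kernel and cokernel as $p$. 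Therefore $\lambda_B$ is epi with $\ke\lambda_B=\ke p=\im\mu_B$, and the four-term sequence is exact. The main obstacle here is precisely the bookkeeping in this last identification: keeping straight which units/counits are being applied, which naturality square is used, and invoking the two triangle identities in the right direction; everything else is formal.

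Finally, for (6): assume $\mathsf{q}$ is exact. Applying $\mathsf{q}$ to $0\to\ke\mu_B\to\mathsf{le}(B)$ and using $\mathsf{q}(\mathsf{le}(B))=0$ gives $\mathsf{q}(\ke\mu_B)=0$; since $\ke\mu_B\cong\mathsf{i}(A'')$ for some $A''$ by (5) and $\mathsf{qi}(A'')\cong A''$ by (3), we get $A''=0$, so $\ke\mu_B=0$ and $\mu_B$ is a monomorphism. Deleting the now-trivial leftmost term from the exact sequence of (5) yields the short exact sequence $0\to\mathsf{le}(B)\xrightarrow{\mu_B}B\xrightarrow{\lambda_B}\mathsf{iq}(B)\to 0$, as claimed.
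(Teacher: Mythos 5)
Your proposal is correct, and for the only part the paper actually proves — item (6), which the proof mislabels as (5) — your argument (apply the exact functor $\mathsf{q}$, use $\mathsf{ql}=0$ and $\mathsf{qi}\simeq\mathrm{Id}_{\mathscr{A}}$ to conclude the kernel term vanishes) is essentially identical to the paper's. Items (1)--(5) are cited by the paper to \cite[Proposition 2.2]{PSS} rather than proved; your from-scratch arguments for them, including the careful identification of $\coker\mu_B$ with $\mathsf{iq}(B)$ via the naturality square for $\lambda$ and the triangle identities, are sound.
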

\begin{proof}
We only give the proof of $(5)$.
By (4), we have the exact sequence
\begin{equation}\label{se.recollement}
0\ra \mathsf{i}(A)\ra \mathsf{le}(B)\xrightarrow{\mu_{B}} B\xrightarrow{\alg_{B}} \mathsf{iq}(B)\ra 0
\end{equation}
with some $A\in \mathscr{A}$.
Since $\mathsf{q}$ is exact by assumption, it follows that
the sequence
$$0\ra \mathsf{qi}(A)\ra \mathsf{qle}(B)\xrightarrow{\mathsf{q}(\mu_{B})} \mathsf{q}(B)\xrightarrow{\mathsf{q}(\alg_{B})} \mathsf{qiq}(B)\ra 0$$
is exact.
We mention that $\mathsf{ql}=0$ and $\mathsf{qi}\simeq \mathrm{Id}_{\mathscr{A}}$.
Thus $A=0$ and \eqref{se.recollement} becomes an exact sequence
$$0\ra \mathsf{le}(B)\xrightarrow{\mu_{B}} B\xrightarrow{\alg_{B}} \mathsf{iq}(B)\ra 0.$$
The proof is completed.
\end{proof}

\begin{example}(\cite[Example 2.3]{PSS})\label{idempotent}
Let $\alg$ be an Artin algebra and let $e=e^{2}$ be an idempotent in $\alg.$
Then we have the following recollement of abelian categories:
$$\scalebox{0.85}{\xymatrixcolsep{2pc}\xymatrix{
			\modLe\ar[rr]^{\mathrm{inc}} && \modL\ar@/_2.0pc/[ll]_{-\otimes_{\alg}\alg/\alg e\alg}\ar@/^2.0pc/[ll]^{\Hom_{\alg}(\alg/\alg e\alg,-)}\ar[rr]^{e(-)} && \modeLe \ar@/_2.0pc/[ll]_{-\otimes_{e\alg e}e\alg }\ar@/^2.0pc/[ll]^{\Hom_{e\alg e}(\alg e,-)} } }$$
Every $\alg/\alg e\alg$-module is annihilated by $\alg e\alg$ and thus the category $\modLe$ is the kernel of the functor $e(-).$
\end{example}

\begin{example}\label{ex:Morita ring}
Let $A$ and $B$ be two Artin algebras, $_{B}M_{A}$  a $B$-$A$-bimodule, $_{A}N_{B}$ an $A$-$B$-bimodule. Consider the Morita ring
$$\alg =\left(\begin{matrix}  A & {_{A}}N_{B}\\  {_{B}}M_{A} & B \\\end{matrix}\right)$$
with zero bimodule homomorphism, i.e. the multiplication is given by $$\left(\begin{matrix}  a & n \\  m & b \\\end{matrix}\right)\cdot\left(\begin{matrix}  a' & n' \\  m' & b' \\\end{matrix}\right)=\left(\begin{matrix}  aa' & an'+nb' \\  ma'+bm' & bb' \\\end{matrix}\right).$$

The description of modules over Morita rings $\alg$ is well known (see \cite{Gao,eg82}).
Let $\mathcal{M}(\alg)$ be the category whose objects are tuples $(X,Y,f,g),$ where $X\in\modA,~Y\in \modB,~f\in\Hom_{B}(X\oo_{A}N,Y)$ and $g\in\Hom_{A}(Y\oo_{B}M,X).$

A morphism $(X,Y,f,g)\ra(X',Y',f',g')$ is a pair $(a,b)$ of homomorphisms, where $a:X\ra X'$ is an $A$-homomorphism and $b:Y\ra Y'$ is a $B$-homomorphism such that the following diagrams are commutative:
 \begin{align*}
 \xymatrixcolsep{0.5pc}\xymatrix{
   X\oo_{A} N \ar[d]_{a\oo\mathrm{Id}_{N}}  \ar[rrr]^{f} & & &   Y \ar[d]^{b} & \\
  X'\oo_{A}N  \ar[rrr]^{f'} & & &  Y'}
  \xymatrixcolsep{0.5pc}\xymatrix{
   Y\oo_{B} M \ar[d]_{b\oo\mathrm{Id}_{M}}  \ar[rrr]^{g} & & &   X \ar[d]^{a} & \\
  Y'\oo_{B}M  \ar[rrr]^{g'} & & &  X'.}
\end{align*}
Then the categories $\modL$ and $\mathcal{M}(\alg)$ are equivalent.
For more details, we refer to \cite{GP}.
From now on we identify the modules over $\alg$ with the objects of $\mathcal{M}(\alg).$
Then there is a recollement of abelian categories
\begin{align*}
\scalebox{0.85}{\xymatrixcolsep{2pc}\xymatrix{
			\modA\ar[rr]^{\mathsf{Z}_{A}} && \modL\ar@/_2.0pc/[ll]_{\mathsf {C}_{A}}\ar@/^2.0pc/[ll]^{\mathsf{K}_{A}}\ar[rr]^{\mathsf {U}_{B}} && \modB \ar@/_2.0pc/[ll]_{\mathsf{T}_{B}}\ar@/^2.0pc/[ll]^{\mathsf{H}_{B}} } }
\end{align*}
where the functors are given as follows:
\begin{enumerate}
\item The functor $\mathsf{T}_{B}:\modB\ra \modL$ is defined on objects $Y\in \modB$ by $\mathsf{T}_{B}(Y)=(Y\otimes_{B}M,Y,0,1)$ and given a morphism $b:Y\ra Y'$ in $\modB$ then $\mathsf{T}_{B}(b)=(b\otimes \mathrm{Id}_{M},b)$ is a morphism in $\modL.$

\item The functor $\mathsf{U}_{B}:\modL\ra \modB$ is defined on objects  $(X,Y,f,g)$ by $\mathsf{U}_{B}(X,Y,f,g)=Y$ and given a morphism $(a,b):(X,Y,f,g)\ra (X',Y',f',g')$ in $\modL$ then $\mathsf{U}_{B}(a,b)=b.$

\item The functor $\mathsf{H}_{B}:\modB\ra \modL$ is defined on objects $Y\in \modB$ by $$\mathsf{H}_{B}(Y)=(\Hom_{B}(N,Y),Y,\varepsilon_{Y},0)$$ and given a morphism $b:Y\ra Y'$ in $\modB$ then $\mathsf{H}_{B}(b)=(\Hom_{B}(N,b),b)$ is a morphism in $\modL,$ where
    $\varepsilon_{}:\Hom_{B}(N,-)\otimes_{A}N\ra \mathrm{Id}_{\BMod}$
  is the counit of adjoint pair $(-\oo_{A}N,\Hom_{B}(N,-)).$

\item The functor $\mathsf{Z}_{A}:\modA\ra \modL$ is defined on objects $X\in \modA$ by $\mathsf{Z}_{A}(X)=(X,0,0,0)$ and given a morphism $a:X\ra X'$ in $\modA$ then $\mathsf{Z}_{A}(a):(X,0,0,0)\ra (X',0,0,0)$ is the induced morphism in $\modL$.
\item The functor $\mathsf{C}_{A}:\modL\ra \modA$ is defined on objects $(X,Y,f,g)\in \modL$ by $\mathsf{C}_{A}(X,Y,f,g)=\coker g$ and given a morphism $(a,b):(X,Y,f,g)\ra (X',Y',f',g')$ in $\modL$ then $\mathsf{C}_{A}(a,b):\coker g\ra \coker g'$ is the induced morphism in $\modA$.
\item The functor $\mathsf{K}_{A}:\modL\ra \modA$ is defined on objects $(X,Y,f,g)\in \modL$ by $\mathsf{K}_{A}(X,Y,f,g)=\ke \widetilde{f},$ where $\widetilde{f}:  X\to\Hom_{B}(N,Y)$ is defined as
    $\widetilde{f}(x)(n)=f(x\otimes n)$ for all $x\in X, n\in N$, and given a morphism $(a,b):(X,Y,f,g)\ra (X',Y',f',g')$ in $\modL$ then $\mathsf{K}_{A}(a,b):\ke \widetilde{g}\ra \ke \widetilde{g'}$ is the induced morphism in $\modA$.
\end{enumerate}
\end{example}

\section{\bf Igusa--Todorov properties}
In this section we first recall the notion of the Igusa--Todorov distance of an abelian category, which is a generalization of the Igusa--Todorov distance of an Artin algebra given in \cite{Z1}.

\begin{definition}\label{def:syzygy object}
Let $\mathscr{A}$ be an abelian category and $n$ a positive integer.
If there is an exact sequence
$$0\ra K\ra P_{n-1}\xrightarrow{d_{n-1}} P_{n-2}\xrightarrow{d_{n-2}} \cdots \ra P_{0}\xrightarrow{d_{0}} X\ra 0$$
in $\mathscr{A}$ such that each $P_{i}$ is  projective for $0\leq i\leq n-1$, then $K$ is called an $n$-syzygy object of $X$.
\end{definition}

\begin{remark}\label{rem:syzygy object}
For $X\in \mathscr{A}$, we know that its $n$-syzygy objects are not unique up to isomorphism, but are unique up to projective summands. Hence, for the sake of brevity,  we  denote by $\Omega^{n}(X)$ an $n$-syzygy object of $X$, and  denote by $\Omega^{n}(\mathscr{A})$ the subcategory of $\mathscr{A}$ consisting of all $n$-syzygy objects.
\end{remark}

%\begin{remark}
%Given an object $X$ in an abelian category $\mathscr{A}$, its $n$-syzygy objects are not unique up to isomorphism, but unique up to projective summands.
%\end{remark}

\begin{definition}\label{def-IT-cat}(\cite[Definition 3.3]{MZL})
	Let $m$ and $n$ be nonnegative integers. Then an abelian category $\mathscr{A}$
	 is called an $(m,n)$-{\bf Igusa--Todorov category} if there exists an object $U\in \mathscr{A}$ such that for any $A\in \s^{n}(\mathscr{A})$, there is an exact sequence
	$$0\ra U_{m}\ra U_{m-1}\ra\cdots\ra U_{1}\ra U_{0}\ra A\ra 0,$$
	with $U_{i}\in \add (U)$ for $0\leq i\leq m$.
\end{definition}

\begin{definition}\label{def:IT distance}(\cite[Definition 3.4]{MZL})
	Let $\mathscr{A}$ be an abelian category.
	We set the {\bf Igusa--Todorov distance} of $\mathscr{A}$ as follows
	$$\IT(\mathscr{A}):=\inf\{m \mid\mathscr{A}\text{ is an}~ (m,n)\text{-Igusa--Todorov category for some nonnegative integer}~n\}.$$
\end{definition}

\begin{definition}(\cite[Definition 2.1]{Z})\label{(m,n)-IT}
Let $m$ and $n$ be nonnegative integers. An Artin algebra $\alg$ is called an {\bf $(m,n)$-Igusa--Todorov algebra} if there exists a $\alg$-module $V\in \modL$ such that  for any $M\in \modL$ there is an exact sequence
$$0\ra V_{m}\ra V_{m-1}\ra \cdots \ra V_{1}\ra V_{0}\ra \s^{n}(M)\ra 0$$
with $V_{i}\in \add(V)$ for each $0\leq i\leq m$.
\end{definition}

\begin{remark}
Let $\alg$ be an Artin algebra. Then $\modL$ is an $(m,n)$-Igusa--Todorov category if and only if $\alg$ is an an $(m,n)$-Igusa--Todorov algebra.
It follows that the Igusa--Todorov distance of $\modL$ equals the Igusa--Todorov distance of $\alg$; see \cite[definition 2.27]{Z1}.
\end{remark}

\begin{remark}\label{rem:IT}
Let $\alg$ be an Artin algebra and let $n$ be a nonnegative integer.
\begin{enumerate}
\item  $(m,n)$-Igusa--Todorov algebras are $(m,n+i)$-Igusa--Todorov algebras for any $i\geq 0$.
\item Recall from \cite[Definition 2.2]{W} that $\alg$ is called an {\bf $n$-Igusa--Todorov algebra} if there exists a $\alg$-module
$U$ such that for any $\alg$-module $X\in \Omega^{n}(\modL)$ there exists an exact sequence $0\ra U_{1}\ra U_{0}\ra X\ra 0$
with $U_{i}\in \mathrm{add}(U)$ for each $0\leq i \leq 1$.
 Furthermore, $\alg$ is called an {\bf Igusa--Todorov algebra} if it is an $n$-Igusa--Todorov algebra for some nonnegative integer $n.$
 By Definition \ref{(m,n)-IT}, we know that $(1,n)$-Igusa--Todorov algebras are precisely $n$-Igusa--Todorov algebras, and the $(0,n)$-Igusa--Todorov algebras are precisely {\bf $n$-syzygy finite algebras}, that is, $\Omega^{n}(\modL)$ is representation-finite for some nonnegative integer $n$.

\end{enumerate}
\end{remark}

\begin{remark}
Let $\alg$ be an Artin algebra.

\begin{enumerate}
\item By Definition \ref{def:IT distance}, $\alg$ is an $(m,n)$-Igusa--Todorov algebra if and only if $\IT(\modL)\leq m.$
In particular, $\alg$ is an Igusa--Todorov algebra  if and only if $\IT(\modL)\leq1.$
\item Recall that $\alg$ is called an {\bf syzygy-finite algebra} if $\alg$ is an $n$-syzygy-finite algebra for some nonnegative integer $n.$
     Then, by Definition \ref{def:IT distance}, $\alg$ is syzygy-finite if and only if $\IT(\modL)=0.$
\end{enumerate}
\end{remark}

\begin{example}{\rm
		Let $k$ be a field, and let $n\geq 1$ be an integer, and let
		$\alg$ be the quantum exterior algebra
		$$\alg=k[ X_{1},\cdots, X_{n}]/(X_{i}^{2},\{X_{i}X_{j}-q_{ij}X_{j}X_{i}\}_{i<j}),$$
		where $0\neq q_{ij}\in k$ and all the $q_{ij}$ are the roots of unity.
		We know from \cite[Example 2.35]{Z1} that $\IT(\modL)=n-1.$
	}
\end{example}

The following lemma can be found for Atrin algebras in \cite[Lemma 3.3]{XuD16}.
For the convenience of the reader, we provide the proof.
\begin{lemma} \label{IT-lem1}
	Let $\mathscr{A}$ be an abelian category. Then
	for any exact sequence
	$$0\longrightarrow X_{1} \longrightarrow  X_{2} \longrightarrow  X_{3} \longrightarrow 0$$
	in $\mathscr{A}$, we have the following exact sequence
	$$0\longrightarrow \s^{1}(X_{3})\longrightarrow X_{1}\oplus P \longrightarrow X_{2}\longrightarrow 0,$$
with  $P\in \mathsf{Proj}\mathscr{A}$.
\end{lemma}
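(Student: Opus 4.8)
The plan is to build the desired sequence from a single pullback square. First I would use that $\mathscr{A}$ has enough projectives to choose an epimorphism $\pi\colon P\ra X_{3}$ with $P\in\mathsf{Proj}\mathscr{A}$; by Definition~\ref{def:syzygy object} the kernel $\ke\pi$ is a $1$-syzygy object, which I denote $\s^{1}(X_{3})$. Since $1$-syzygy objects are unique only up to projective summands (Remark~\ref{rem:syzygy object}) and since $P$ may always be enlarged by a projective direct summand, it is harmless to work with this specific representative.

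Next, I would form the pullback of the given epimorphism $g\colon X_{2}\ra X_{3}$ along $\pi$:
$$\xymatrix{ E \ar[r] \ar[d] & P \ar[d]^{\pi} \\ X_{2} \ar[r]^{g} & X_{3}. }$$
Standard properties of pullbacks in an abelian category (or the snake lemma applied to this square) yield two short exact sequences. Because $g$ is epic, the projection $E\ra P$ is epic with kernel canonically isomorphic to $\ke g=X_{1}$, giving
$$0\longrightarrow X_{1}\longrightarrow E\longrightarrow P\longrightarrow 0;$$
symmetrically, because $\pi$ is epic, the projection $E\ra X_{2}$ is epic with kernel canonically isomorphic to $\ke\pi=\s^{1}(X_{3})$, giving
$$0\longrightarrow \s^{1}(X_{3})\longrightarrow E\longrightarrow X_{2}\longrightarrow 0.$$

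Finally, since $P$ is projective the first sequence splits, so $E\cong X_{1}\oplus P$; substituting this isomorphism into the second sequence produces exactly
$$0\longrightarrow \s^{1}(X_{3})\longrightarrow X_{1}\oplus P\longrightarrow X_{2}\longrightarrow 0,$$
which is the claim. There is no serious obstacle here: the only points needing minor care are the identification of the kernels of the two pullback projections (a routine diagram chase) and the remark above that fixing one syzygy representative costs nothing, since $\s^{1}$ is well defined up to projective summands.
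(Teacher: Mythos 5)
Your proof is correct and follows essentially the same route as the paper: both form the pullback of $X_{2}\ra X_{3}$ along a projective cover $P\ra X_{3}$, extract the two short exact sequences from the resulting diagram, and use projectivity of $P$ to split the row $0\ra X_{1}\ra E\ra P\ra 0$, identifying $E\cong X_{1}\oplus P$. No issues.
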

\begin{proof}
 Since $\mathscr{A}$ has enough projectives, there exists an exact sequence
$$0\ra \s^{1}(X_{3})\ra P\xrightarrow{f} X_{3}\ra 0$$
 such that $P$ is projective.
 Consider the following pullback diagram:
 $$\xymatrix{&&0\ar[d]&0\ar[d]\\
		&&\s^{1}(X_{3})\ar@{=}[r]\ar[d]&\s^{1}(X_{3})\ar[d]\\
		0\ar[r]&X_{1}\ar[r]\ar@{=}[d]&Z\ar[d]\ar[r]&P\ar[d] \ar[r]&0\\
		0\ar[r] &X_{1}\ar[r]&X_{2}\ar[d]\ar[r]\ar[d]&X_{3}\ar[r]\ar[d]&0 \\&&0&0}$$
Since $P$ is projective, the exact sequence
$$0\ra X_{1}\ra Z\ra P\ra 0$$
splits and so $Z\cong X_{1}\oplus P$.
It implies that there exists an exact sequence
$$0\ra \s^{1}(X_{3})\ra X_{1}\oplus P \ra X_{2}\ra 0$$
with $P\in \mathsf{Proj}\mathscr{A}$, as desired.
\end{proof}

\begin{lemma}\label{lem-exact-syzygy}
	Let $\mathscr{A}$ and $\mathscr{B}$ be abelian categories and $n \geq 0$, and let $\mathsf{F}: \mathscr{A}\to \mathscr{B}$ be an exact
	functor preserving projective objects.
Then for any $X\in \mathscr{A}$ we have $$\mathsf{F}(\s^{n}(X))\oplus P'\cong \s^{n}(\mathsf{F}(X))\oplus P$$
for some projective objects $P$ and $P'$ in $\mathscr{B}.$
\end{lemma}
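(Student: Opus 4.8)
The plan is to reduce everything to the generalized Schanuel principle recorded in Remark~\ref{rem:syzygy object}, namely that any two $n$-syzygy objects of a fixed object of an abelian category with enough projectives agree up to projective summands. Concretely, I would exhibit $\mathsf{F}(\Omega^{n}(X))$ as \emph{one} $n$-syzygy object of $\mathsf{F}(X)$; since $\Omega^{n}(\mathsf{F}(X))$ is by definition another, the asserted stable isomorphism follows at once.

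First I would dispose of the degenerate case $n=0$, where $\Omega^{0}$ is the identity on both sides and the claim holds with $P=P'=0$. For $n\geq 1$, I would fix, as provided by Definition~\ref{def:syzygy object}, an exact sequence
$$0\ra \Omega^{n}(X)\ra P_{n-1}\xrightarrow{d_{n-1}} P_{n-2}\ra\cdots\ra P_{0}\xrightarrow{d_{0}} X\ra 0$$
in $\mathscr{A}$ with every $P_{i}$ projective. Breaking this into the short exact sequences $0\ra \im d_{i}\ra P_{i-1}\ra \im d_{i-1}\ra 0$ (with the usual conventions $\im d_{n}=\Omega^{n}(X)$ and $\im d_{0}=X$ at the two ends), applying the exact and additive functor $\mathsf{F}$ to each of them, and splicing the resulting short exact sequences back together, I obtain an exact sequence
$$0\ra \mathsf{F}(\Omega^{n}(X))\ra \mathsf{F}(P_{n-1})\ra\cdots\ra \mathsf{F}(P_{0})\ra \mathsf{F}(X)\ra 0$$
in $\mathscr{B}$. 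Since $\mathsf{F}$ preserves projective objects, each $\mathsf{F}(P_{i})$ is projective in $\mathscr{B}$, so this sequence witnesses that $\mathsf{F}(\Omega^{n}(X))$ is an $n$-syzygy object of $\mathsf{F}(X)$.

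To finish, I would apply Remark~\ref{rem:syzygy object} to the two $n$-syzygy objects $\mathsf{F}(\Omega^{n}(X))$ and $\Omega^{n}(\mathsf{F}(X))$ of $\mathsf{F}(X)$, obtaining projective objects $P,P'$ in $\mathscr{B}$ with $\mathsf{F}(\Omega^{n}(X))\oplus P'\cong \Omega^{n}(\mathsf{F}(X))\oplus P$, as desired. I do not expect a genuine obstacle here; the only points needing a little care are the bookkeeping convention for $n=0$ and the (standard, but worth stating) fact that an exact functor carries a long exact sequence to a long exact sequence via its short-exact decomposition. A more hands-on alternative would be an induction on $n$ using Schanuel's lemma at each step together with $\Omega^{n}=\Omega^{n-1}\Omega^{1}$ up to projectives, but the argument above is cleaner since it offloads all the stable bookkeeping onto Remark~\ref{rem:syzygy object}.
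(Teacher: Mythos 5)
Your proposal is correct and follows essentially the same route as the paper: apply the exact, projective-preserving functor $\mathsf{F}$ to a defining exact sequence for $\s^{n}(X)$ to exhibit $\mathsf{F}(\s^{n}(X))$ as an $n$-syzygy object of $\mathsf{F}(X)$, then invoke the uniqueness up to projective summands from Remark \ref{rem:syzygy object}. The extra care you take with the $n=0$ case and with splicing short exact sequences is fine but not needed beyond what the paper already does.
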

\begin{proof}
By Definition \ref{def:syzygy object} there exists an exact sequence
\begin{equation}\label{preserve syzygy}
0\ra \s^{n}(X)\ra P_{n-1}\ra P_{n-2}\ra \cdots \ra P_{0}\ra X\ra 0
\end{equation}
in $\mathscr{A}$ with each $P_{i}$ projective for $0\leq i\leq n-1$.
Since $\mathsf{F}$ is exact, applying $\sf F$ to \eqref{preserve syzygy}, we obtain an exact sequence
\begin{equation*}
0\ra \mathsf{F}(\s^{n}(X))\ra \mathsf{F}(P_{n-1})\ra \mathsf{F}(P_{n-2})\ra \cdots \ra \mathsf{F}(P_{0})\ra \mathsf{F}(X)\ra 0.
\end{equation*}
We mention that each $\mathsf{F}(P_{i})$ is projective for $0\leq i\leq n-1$ as $\mathsf{F}$ preserves projective objects.
It follows that $\mathsf{F}(\s^{n}(X))$ is an $n$-syzygy object of $\mathsf{F}(X)$.
Consequently, by Remark \ref{rem:syzygy object}, we have $\mathsf{F}(\s^{n}(X))\oplus P'\cong \s^{n}(\mathsf{F}(X))\oplus P$
for some projective objects $P$ and $P'$ in $\mathscr{B}.$
\end{proof}

The following lemma is key to the proof of Theorem \ref{main theorem}.
\begin{lemma}\label{IT-lem2}
Let  $0\ra K\ra  A\ra B\ra 0$  be an exact sequence in an abelian category $\mathscr{A}$.
	If there is an exact sequence
	$$0\ra V_{m}\xrightarrow{d_{m}}V_{m-1}\xrightarrow{d_{m-1}}\cdots\ra V_{1}\xrightarrow{d_{1}}V_{0}\xrightarrow{d_{0}}\Omega^{t}(A)\ra 0$$
	for some nonnegative integer $t$,
	then we have the following exact sequence
	$$0\ra \Omega^{t+m}(K)\ra V_{m}\oplus P_{m}\ra\cdots \ra V_{1}\oplus P_{1}\ra V_{0}\oplus P_{0}\ra \Omega^{t}(B)\ra 0,$$
	where each $P_{i}$ belongs to $\mathsf{Proj}\mathscr{A}$ for $0\leq i\leq m$.
\end{lemma}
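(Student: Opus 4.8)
The plan is to reduce the statement to two ingredients. First, a Horseshoe-type short exact sequence
$$0\to\Omega^{t}(K)\to\Omega^{t}(A)\oplus P_{0}\to\Omega^{t}(B)\to 0 \qquad (\star)$$
for some $P_{0}\in\mathsf{Proj}\mathscr{A}$, in which $\Omega^{t}(A)$ may be taken to be the very object appearing in the hypothesis. Second, an inductive ``splicing'' that glues the given exact sequence ending in $\Omega^{t}(A)$ together with a projective resolution of $\Omega^{t}(K)$ so as to build the desired exact sequence ending in $\Omega^{t}(B)$; this second part is where Lemma \ref{IT-lem1} does all the work.

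To obtain $(\star)$ I would invoke the Horseshoe Lemma for $0\to K\to A\to B\to 0$, which produces an exact sequence $0\to\Omega^{t}(K)\to W\to\Omega^{t}(B)\to 0$ with $W$ a $t$-syzygy of $A$; since $t$-syzygies are unique up to projective summands (Remark \ref{rem:syzygy object}), we have $W\oplus Q\cong\Omega^{t}(A)\oplus P_{0}$ for suitable projectives $Q,P_{0}$, and adding $Q$ to the middle and right-hand terms (re-reading $\Omega^{t}(B)\oplus Q$ as a $t$-syzygy of $B$) gives $(\star)$. Alternatively, $(\star)$ can be proved by induction on $t$ using only Lemma \ref{IT-lem1}: granting it for $t$, apply Lemma \ref{IT-lem1} three successive times --- to $(\star)$, then to the resulting sequence, then to that one --- and finally absorb the spurious projective summand appearing on the right back into $\Omega^{t+1}(B)$.

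Now for the splicing. Write the hypothesis sequence as $0\to V_{m}\xrightarrow{d_{m}}\cdots\xrightarrow{d_{1}}V_{0}\xrightarrow{d_{0}}\Omega^{t}(A)\to 0$, put $C_{0}=\Omega^{t}(A)$ and $C_{j}=\im d_{j}=\ke d_{j-1}$ for $1\le j\le m$, so that $0\to C_{j+1}\to V_{j}\xrightarrow{\tilde d_{j}}C_{j}\to 0$ is exact for $0\le j\le m$, where $C_{m+1}=0$ because $d_{m}$ is monic. I then construct objects $N_{0},\dots,N_{m}$ recursively. Let $N_{0}$ be the kernel of the composite epimorphism $V_{0}\oplus P_{0}\xrightarrow{\tilde d_{0}\oplus\mathrm{id}}\Omega^{t}(A)\oplus P_{0}\to\Omega^{t}(B)$ (the second map being the one from $(\star)$); computing the kernel of a composite of epimorphisms yields both $0\to C_{1}\to N_{0}\to\Omega^{t}(K)\to 0$ and $0\to N_{0}\to V_{0}\oplus P_{0}\to\Omega^{t}(B)\to 0$. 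Inductively, suppose $0\to C_{j}\to N_{j-1}\to\Omega^{t+j-1}(K)\to 0$ is available; applying Lemma \ref{IT-lem1} to it yields a projective object $P_{j}$ and a surjection $C_{j}\oplus P_{j}\twoheadrightarrow N_{j-1}$ whose kernel is a $1$-syzygy of $\Omega^{t+j-1}(K)$, hence an $(t+j)$-syzygy $\Omega^{t+j}(K)$ of $K$, and precomposing with $\tilde d_{j}\oplus\mathrm{id}\colon V_{j}\oplus P_{j}\twoheadrightarrow C_{j}\oplus P_{j}$ (whose kernel is $C_{j+1}$) produces a surjection $V_{j}\oplus P_{j}\twoheadrightarrow N_{j-1}$ whose kernel $N_{j}$ sits in an exact sequence $0\to C_{j+1}\to N_{j}\to\Omega^{t+j}(K)\to 0$.

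Finally I would splice: the epimorphisms $V_{0}\oplus P_{0}\twoheadrightarrow\Omega^{t}(B)$ and $V_{j}\oplus P_{j}\twoheadrightarrow N_{j-1}\hookrightarrow V_{j-1}\oplus P_{j-1}$ for $1\le j\le m$ assemble into an exact sequence
$$0\to N_{m}\to V_{m}\oplus P_{m}\to\cdots\to V_{0}\oplus P_{0}\to\Omega^{t}(B)\to 0,$$
and since $C_{m+1}=0$ the sequence $0\to C_{m+1}\to N_{m}\to\Omega^{t+m}(K)\to 0$ identifies $N_{m}$ with an $(t+m)$-syzygy of $K$; this is exactly the assertion. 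The main thing to watch is the persistent bookkeeping of projective summands: syzygies are defined only up to projective direct summands, so in proving $(\star)$ one must repeatedly re-interpret a sum ``$X\oplus(\text{projective})$'' as a legitimate syzygy object, and one must check that the composite $V_{j}\oplus P_{j}\twoheadrightarrow N_{j-1}$ really has the claimed kernel --- which is precisely the ``kernel of a composite of epimorphisms'' short exact sequence. The terminal case $j=m$, where $C_{m+1}=0$ forces $N_{m}$ to be an honest $(t+m)$-syzygy of $K$ rather than merely an extension of one, is what makes the output sequence \emph{start} with $\Omega^{t+m}(K)$.
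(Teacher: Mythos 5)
Your proposal is correct and follows essentially the same route as the paper's proof: the Horseshoe Lemma produces the comparison sequence $(\star)$ relating $\Omega^{t}(K)$, $\Omega^{t}(A)$ and $\Omega^{t}(B)$ up to projective summands, and the recursive construction of the $N_{j}$ via Lemma \ref{IT-lem1} followed by taking the kernel of a composite of epimorphisms is exactly the paper's iterated pullback argument (your $N_{j}$ and $C_{j+1}$ are the paper's intermediate objects and $\ke d_{j}$). The bookkeeping of projective summands and the terminal observation that $C_{m+1}=0$ identifies $N_{m}$ with $\Omega^{t+m}(K)$ are handled just as in the paper.
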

\begin{proof}
	We denote $\ke d_{i}$ by $K_{i}$ for $0\leq i\leq m$.
	Consider the exact sequence
	\begin{equation}
		\label{se.3.1}
		0\ra K_{0}\ra V_{0}\ra \s^{t}(A)\ra 0.
	\end{equation}
Since $0\ra K\ra  A\ra B\ra 0$ is  exact, by the horseshoe Lemma and Remark \ref{rem:syzygy object}, we have the exact sequence
$$0\ra \s^{t}(K)\oplus Q_{0}\ra \s^{t}(A)\oplus P_{0}\ra \s^{t}(B)\ra 0$$
for some projective objects $P_{0}$ and $Q_{0}$.
Consider the following pullback diagram:
	$$\xymatrix{&0\ar[d]&0\ar[d]\\
		&K_{0}\ar@{=}[r]\ar[d]&K_{0}\ar[d]\\
		0\ar[r]&Z\ar[r]\ar[d]&V_{0}\oplus P_{0}\ar[d]\ar[r]&\s^{t}(B)\ar@{=}[d] \ar[r]&0\\
		0\ar[r] &\s^{t}(K)\oplus Q_{0}\ar[d]\ar[r]&\s^{t}(A)\oplus P_{0}\ar[d]\ar[r]\ar[d]&\s^{t}(B)\ar[r]&0 \\&0&0&}$$
Applying Lemma \ref{IT-lem1} to the left column in this diagram, we obtain the exact sequence
	$$0\ra \s^{t+1}(K)\ra K_{0}\oplus P_{1}\ra Z\ra 0$$ with $P_{1}\in \mathsf{Proj}\mathscr{A}$.
Next consider the exact sequence
	\begin{equation*}
		\label{se.3.2}
		0\ra K_{1} \ra V_{1}\ra K_{0}\ra 0
	\end{equation*}
	and the pullback diagram
	$$\xymatrix{&0\ar[d]&0\ar[d]\\
		&K_{1}\ar@{=}[r]\ar[d]&K_{1}\ar[d]\\
		0\ar[r]&M\ar[r]\ar[d]&V_{1}\oplus P_{1}\ar[d]\ar[r]&Z\ar@{=}[d] \ar[r]&0\\
		0\ar[r] &\s^{t+1}(K)\ar[d]\ar[r]&K_{0}\oplus P_{1}\ar[d]\ar[r]\ar[d]&Z\ar[r]&0 \\&0&0.&}$$
	Connecting the middle rows in above two diagrams together, one gets an exact sequence
	$$0\ra M\ra V_{1}\oplus P_{1}\ra V_{0}\oplus P_{0}\ra \s^{t}(B)\ra 0.$$
Continuing the process, we obtain exact sequences
	$$0\ra K_{m}\ra X\ra \s^{t+m}(K)\ra 0$$
	and
	$$0\ra X\ra V_{m}\oplus P_{m}\ra V_{m-1}\oplus P_{m-1}\ra \cdots\ra V_{0}\oplus P_{0}\ra \s^{t}(B)\ra 0$$
	with each $P_{i}$ projective for $0\leq i\leq m$.
	But $K_{m}=0$, thus $X\cong \s^{t+m}(K).$
	It follows that we obtain the  exact sequence
	$$0\ra\s^{t+m}(K)\ra V_{m}\oplus P_{m}\ra V_{m-1}\oplus P_{m-1}\ra\cdots \ra V_{0}\oplus P_{0}\ra \s^{t}(B)\ra 0,$$
	which completes the proof of the lemma.
\end{proof}

We begin with the following observation.
\begin{proposition}\label{prop1}
Let $(\mathscr{A},\mathscr{B},\mathscr{C})$ be a recollement of abelian categories such that $\sf l$ is exact.
Assume that there exists an exact sequence in $\mathscr{C}$ $$0\ra W_{n}\ra W_{n-1}\ra \cdots \ra W_{1}\ra W_{0}\ra \s^{t}(\mathsf e(B))\ra 0$$
for $B\in \mathscr{B}$ and some nonnegative integer $t$.
\begin{enumerate}
\item Then there is an exact sequence
\begin{equation}\label{IT.9}
 0\ra \mathsf{l}(W_{n})\ra\mathsf{l}(W_{n})\ra \cdots \ra \mathsf{l}(W_{1})\ra X\ra \s^{t}(\mathsf{le}(B))\ra 0
\end{equation}
with $X\in \add(\mathsf{l}(W_{0})\oplus Q)$ for some $Q\in \mathsf{Proj}\mathscr{B}$.
\item  Then there is an exact sequence
\begin{multline*}
0\ra \s^{t+n+2}(\mathsf {iq}(B))\ra \s^{t+n}(\mathsf i(A))\oplus Q_{n+1}\ra \mathsf{l}(W_{n})\oplus Q_{n}\ra \cdots \\
\ra \mathsf{l}(W_{1})\oplus Q_{1}\ra X\oplus Q_{0}\ra \s^{t}(B)\ra 0
\end{multline*}
for some $A\in \mathscr{A}$, where each $Q_{i}\in \mathsf{Proj}\mathscr{B}$ and $X\in \add(\mathsf{l}(W_{0})\oplus Q)$ for some $Q\in \mathsf{Proj}\mathscr{B}$.
\item If $\mathsf{q}$ is exact, then we have the exact sequence
$$0\ra \s^{t+n+1}(\mathsf{iq}(B))\ra\mathsf{l}(W_{n})\oplus P_{n}\ra \cdots \ra \mathsf{l}(W_{1})\oplus P_{1}\ra X\oplus P_{0}\ra \s^{t}(B)\ra 0,$$
where each $P_{i}\in \mathsf{Proj}\mathscr{B}$, and $X\in \add(\mathsf{l}(W_{0})\oplus Q)$ for some $Q\in \mathsf{Proj}\mathscr{B}$.
\end{enumerate}
\end{proposition}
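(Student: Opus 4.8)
The plan is to read (1) as a transport-of-structure statement for the exact functor $\sf l$, and then to obtain (2) and (3) by feeding the resolution \eqref{IT.9} produced in (1) into the canonical (four- or three-term) exact sequences attached to an object $B\in\mathscr{B}$ in Lemma \ref{lem of recollement}(5)--(6), using Lemma \ref{IT-lem1} and Lemma \ref{IT-lem2} repeatedly as the transfer tool.

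For (1), I would first apply the exact functor $\sf l$ to the hypothesised sequence, getting an exact sequence $0\ra\mathsf{l}(W_{n})\ra\cdots\ra\mathsf{l}(W_{0})\ra\mathsf{l}(\s^{t}(\mathsf{e}(B)))\ra 0$. Since $\sf l$ preserves projectives by Lemma \ref{lem of recollement}(4), Lemma \ref{lem-exact-syzygy} identifies $\mathsf{l}(\s^{t}(\mathsf{e}(B)))$ with $\s^{t}(\mathsf{le}(B))$ up to projective summands: $\mathsf{l}(\s^{t}(\mathsf{e}(B)))\oplus P'\cong\s^{t}(\mathsf{le}(B))\oplus P$. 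Enlarging the last two terms of the sequence by $P'$, substituting this isomorphism, and then splitting the spurious projective $P$ off the right-hand end (pull back the final epimorphism along $\s^{t}(\mathsf{le}(B))\hookrightarrow\s^{t}(\mathsf{le}(B))\oplus P$; the complementary short exact sequence splits since $P$ is projective) turns $\mathsf{l}(W_{0})\oplus P'$ into a direct summand $X$ of itself and gives \eqref{IT.9}, with $Q=P'$.

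For (2), I would invoke the four-term exact sequence $0\ra\mathsf{i}(A)\ra\mathsf{le}(B)\xrightarrow{\mu_{B}}B\xrightarrow{\lambda_{B}}\mathsf{iq}(B)\ra 0$ of Lemma \ref{lem of recollement}(5) --- this is where the object $A\in\mathscr{A}$ comes from --- and split it as $0\ra\mathsf{i}(A)\ra\mathsf{le}(B)\ra C\ra 0$ and $0\ra C\ra B\ra\mathsf{iq}(B)\ra 0$ with $C=\im\mu_{B}$. Applying Lemma \ref{IT-lem1} to the latter yields $0\ra\s^{1}(\mathsf{iq}(B))\ra C\oplus P\ra B\ra 0$, in which $C\oplus P$ is now a \emph{middle} term. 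Then I would run Lemma \ref{IT-lem2} twice: first on $0\ra\mathsf{i}(A)\ra\mathsf{le}(B)\ra C\ra 0$ fed by \eqref{IT.9}, producing an exact sequence $0\ra\s^{t+n}(\mathsf{i}(A))\ra\mathsf{l}(W_{n})\oplus P_{n}\ra\cdots\ra X\oplus P_{0}\ra\s^{t}(C)\ra 0$ with $P_{i}$ projective; and then, after enlarging its last two terms by a projective so that it resolves $\s^{t}(C\oplus P)$ (legitimate by Remark \ref{rem:syzygy object}), on $0\ra\s^{1}(\mathsf{iq}(B))\ra C\oplus P\ra B\ra 0$, which introduces the term $\s^{t+n+1}(\s^{1}(\mathsf{iq}(B)))=\s^{t+n+2}(\mathsf{iq}(B))$. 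Absorbing all the auxiliary projectives into objects $Q_{i}$ produces exactly the asserted sequence.

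Part (3) is the same argument with one layer fewer: when $\sf q$ is exact, Lemma \ref{lem of recollement}(6) replaces the four-term sequence by $0\ra\mathsf{le}(B)\xrightarrow{\mu_{B}}B\xrightarrow{\lambda_{B}}\mathsf{iq}(B)\ra 0$, so there is no $\mathsf{i}(A)$, and a single use of Lemma \ref{IT-lem1} (to move $\mathsf{le}(B)\oplus P$ into the middle) followed by a single use of Lemma \ref{IT-lem2} fed by \eqref{IT.9} suffices, now producing $\s^{t+n}(\s^{1}(\mathsf{iq}(B)))=\s^{t+n+1}(\mathsf{iq}(B))$. The main obstacle is organisational rather than conceptual: keeping the bookkeeping of projective direct summands straight and, crucially, making sure that at each invocation of Lemma \ref{IT-lem2} the object whose $\s^{t}$ is already resolved sits in the \emph{middle} of the short exact sequence being fed in. It is precisely this requirement that forces the detour through Lemma \ref{IT-lem1}, since in Lemma \ref{lem of recollement}(5)--(6) the object we control ($C$, respectively $\mathsf{le}(B)$) occurs as a sub-object of $0\ra C\ra B\ra\mathsf{iq}(B)\ra 0$ rather than as a middle term.
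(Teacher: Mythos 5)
Your proposal is correct and follows essentially the same route as the paper: for (1) apply the exact functor $\sf l$, invoke Lemma \ref{lem-exact-syzygy}, and strip the spurious projective summand by a pullback; for (2) and (3) splice the four-term (resp.\ three-term) sequence of Lemma \ref{lem of recollement}(5)--(6) with \eqref{IT.9} via Lemma \ref{IT-lem1} and two (resp.\ one) applications of Lemma \ref{IT-lem2}, in the same order and with the same syzygy bookkeeping. Your explicit note that the second application of Lemma \ref{IT-lem2} must be fed a resolution of $\s^{t}(\ke\lambda_{B}\oplus P)$ rather than of $\s^{t}(\ke\lambda_{B})$ is a point the paper passes over silently, but it is handled exactly as you say via Remark \ref{rem:syzygy object}.
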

\begin{proof}
(1)
By assumption, there exists an exact sequence
\begin{equation}\label{kse1}
0\ra W_{n}\ra W_{n-1}\ra \cdots \ra W_{1}\ra W_{0}\ra \s^{t}(\mathsf{e}(B))\ra 0.
\end{equation}
Since $\sf l$ preserves projective objects by Lemma \ref{lem of recollement} and is exact by assumption, it follows from Lemma \ref{lem-exact-syzygy}  that $$\s^{t}(\mathsf{le}(B))\oplus Q\cong \mathsf{l}(\s^{t}(\mathsf{e}(B)))\oplus P$$ for some projective objects $Q$ and $P$ in $\mathscr{B}$.
Hence applying $\sf l$ to the \eqref{kse1}, we get the exact sequence
\begin{equation}
		\label{IT.4}
		0\ra \mathsf{l}(W_{n})\ra\mathsf{l}(W_{n-1})\ra\cdots \ra \mathsf{l}(W_{1})\ra\mathsf{l}(W_{0})\oplus P\ra \s^{t}(\mathsf{le}(B))\oplus Q\ra 0.
	\end{equation}	
We denote by $K=\ke(\mathsf{l}(W_{0})\oplus P\ra \s^{t}(\mathsf{le}(B))\oplus Q)$.
Thus we get exact sequences
\begin{equation*}
0\ra K\ra \mathsf{l}(W_{0})\oplus P\ra \s^{t}(\mathsf{le}(B))\oplus Q\ra 0
\end{equation*}
and
\begin{equation}\label{IT.7}
0\ra \mathsf{l}(W_{n})\ra \mathsf{l}(W_{n-1})\ra \cdots \ra \mathsf{l}(W_{1})\ra K\ra 0.
\end{equation}
Consider the following pullback diagram:
$$\xymatrix{&&0\ar[d]&0\ar[d]\\
		0\ar[r]&K\ar@{=}[d]\ar[r]&X\ar[d]\ar[r]&\s^{t}(\mathsf{le}(B))\ar[d]\ar[r]&0\\
		0\ar[r]&K\ar[r]&\mathsf{l}(W_{0})\oplus P \ar[d]\ar[r]&\s^{t}(\mathsf{le}(B))\oplus Q\ar[d] \ar[r]&0\\
		 &&Q\ar[d]\ar@{=}[r]&Q\ar[d]&& \\&&0&0}$$
Thus we get an exact sequence
\begin{equation}\label{IT.8}
0\ra K\ra X\ra \s^{t}(\mathsf{le}(B))\ra 0
\end{equation}
 such that $X$ is a direct summand of $\mathsf{l}(W_{0})\oplus P$ as $Q$ is projective.
 Connecting exact sequences \eqref{IT.7} and \eqref{IT.8} together, we get the desired exact sequence
\begin{equation*}
 0\ra \mathsf{l}(W_{n})\ra\mathsf{l}(W_{n})\ra \cdots \ra \mathsf{l}(W_{1})\ra X\ra \s^{t}(\mathsf{le}(B))\ra 0
 \end{equation*}
 with $X\in \add(\mathsf{l}(W_{0})\oplus Q)$.

(2) By Lemma \ref{lem of recollement}(5), we have the exact sequence
$$0\ra \mathsf{i}(A)\ra \mathsf{le}(B)\xrightarrow{\mu_{B}} B\xrightarrow{\alg_{B}} \mathsf{iq}(B)\ra 0$$
for some $A\in \mathscr{A}$. Consider the exact sequence
\begin{equation}\label{IT.10}
0\ra \mathsf{i}(A)\ra \mathsf{le}(B)\ra \ke\alg_{B}\ra 0.
\end{equation}
Applying Lemma \ref{IT-lem2} to \eqref{IT.9} and \eqref{IT.10}, we obtain the following exact sequence
\begin{equation}\label{IT.11}
0\ra \s^{t+n}(\mathsf{i}(A))\ra  \mathsf{l}(W_{n})\oplus P_{n}\ra \cdots \ra \mathsf{l}(W_{1})\oplus P_{1}\ra X\oplus P_{0}\ra \s^{t}(\ke\alg_{B})\ra 0
\end{equation}
with each $P_{i}\in \mathsf{Proj}\mathscr{B}.$
On the other hand, using Lemma \ref{IT-lem1} to the exact sequence
\begin{equation*}
0\ra \ke\alg_{B}\ra B\ra\mathsf{iq}(B)\ra 0,
\end{equation*}
we obtain an exact sequence
\begin{equation}\label{IT.12}
0\ra \s(\mathsf{iq}(B))\ra \ke\alg_{B}\oplus P'\ra B\ra 0
\end{equation}
with $P'\in \mathsf{Proj}\mathscr{B}$.
Using Lemma \ref{IT-lem2} to \eqref{IT.11} and \eqref{IT.12} again, we have the following exact sequence
\begin{multline}
0\ra\s^{t+n+2}(\mathsf{iq}(B))\ra \s^{t+n}(\mathsf{i}(A))\oplus Q_{n+1}\ra\\  \mathsf{l}(W_{n})\oplus P_{n}\oplus Q'_{n}\ra \cdots \ra \mathsf{l}(W_{1})\oplus P_{1}\oplus Q'_{1}\ra X\oplus P_{0}\oplus Q'_{0}\ra \s^{t}(B)\ra 0
\end{multline}
with each $Q'_{i}\in \mathsf{Proj}\mathscr{B}$ for $0\leq i\leq n+1$ and $X\in \add(\mathsf{l}(W_{0})\oplus Q)$ for some $Q\in \mathsf{Proj}\mathscr{B}$.
Set $Q_{i}=P_{i}\oplus Q'_{i}$ for $0\leq i\leq n$. Thus we get the desired exact sequence.

(3) Since  $\mathsf{q}$ is exact, it follows from Lemma \ref{lem of recollement}(6) that there exists an exact sequence
$$0\ra \mathsf{le}(B)\xrightarrow{\mu_{B}} B\xrightarrow{\alg_{B}} \mathsf{iq}(B)\ra 0.$$
By Lemma \ref{IT-lem1}, we obtain an exact sequence
\begin{equation}\label{IT.21}
0\ra \s(\mathsf{iq}(B))\ra \mathsf{le}(B)\oplus P\ra B\ra 0
\end{equation}
with $P\in \mathsf{Proj}\mathscr{B}$.
By (1), we know that there is an exact sequence
\begin{equation}\label{IT.22}
 0\ra \mathsf{l}(W_{n})\ra\mathsf{l}(W_{n})\ra \cdots \ra \mathsf{l}(W_{1})\ra X\ra \s^{t}(\mathsf{le}(B))\ra 0
\end{equation}
with $X\in \add(\mathsf{l}(W_{0})\oplus Q)$ for some $Q\in \mathsf{Proj}\mathscr{B}$.
Using Lemma \ref{IT-lem2} to \eqref{IT.21} and \eqref{IT.22} again, we have the following exact sequence
\begin{equation}
 0\ra \s^{t+n+1}(\mathsf{iq}(B))\ra\mathsf{l}(W_{n})\oplus P_{n}\ra \cdots \ra \mathsf{l}(W_{1})\oplus P_{1}\ra X\oplus P_{0}\ra \s^{t}(B)\ra 0
\end{equation}
with each $P_{i}\in \mathsf{Proj}\mathscr{B}$ and $X\in \add(\mathsf{l}(W_{0})\oplus Q)$.
\end{proof}

To proceed further we recall from \cite{P} the notion of the {\bf $\mathscr{A}$-relative global dimension} of $\mathscr{B}$ , which is defined by
$$\mathsf{gl.dim}_{\mathscr{A}}\mathscr{B}:=\sup\{\mathsf{pd}_{\mathscr{B}}\mathsf{i}(A)\mid A\in \mathscr{A}\}.$$
\begin{proposition}\label{thm1}
Let $(\mathscr{A},\mathscr{B},\mathscr{C})$ be a recollement of abelian categories with $\mathsf{Proj}\mathscr{B}=\add(P)$ for some projective object $P$.
 Assume that $\sf l$ is exact and $\mathscr{C}$ is an $(n,t)$-Igusa--Todorov category.
Then the following statements hold.
\begin{enumerate}
\item If $n+t<\mathsf{gl.dim}_{\mathscr{A}}\mathscr{B}<\infty$, then $\mathscr{B}$ is a $(\mathsf{gl.dim}_{\mathscr{A}}\mathscr{B}-t+1,t)$-Igusa--Todorov category.
\item If $n+t\geq \mathsf{gl.dim}_{\mathscr{A}}\mathscr{B}$, then $\mathscr{B}$ is an $(n+2,t)$-Igusa--Todorov category.
\item Assume that $\sf q$ is exact. If $n+t+1<\mathsf{gl.dim}_{\mathscr{A}}\mathscr{B}<\infty$, then $\mathscr{B}$ is a $(\mathsf{gl.dim}_{\mathscr{A}}\mathscr{B}-t,t)$-Igusa--Todorov category.
\item Assume that $\sf q$ is exact.
If $n+t+1\geq \mathsf{gl.dim}_{\mathscr{A}}\mathscr{B}$, then $\mathscr{B}$ is an $(n+1,t)$-Igusa--Todorov category.
\end{enumerate}
\end{proposition}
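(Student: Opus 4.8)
The plan is to treat all four statements uniformly: put $U:=\mathsf{l}(W)\oplus P$, where $W\in\mathscr{C}$ is a test object witnessing that $\mathscr{C}$ is $(n,t)$-Igusa-Todorov, and show that $U$ works for $\mathscr{B}$ with syzygy degree $t$ and the stated value of $m$. Since every object of $\s^{t}(\mathscr{B})$ is $\s^{t}(B)$ for some $B\in\mathscr{B}$, it suffices to produce, for an arbitrary such $B$, an exact sequence $0\ra U_{m}\ra\cdots\ra U_{0}\ra\s^{t}(B)\ra 0$ with all $U_{i}\in\add(U)$. First I would apply the $(n,t)$-Igusa-Todorov property of $\mathscr{C}$ to the $t$-syzygy object $\s^{t}(\mathsf{e}(B))$, obtaining an exact sequence $0\ra W_{n}\ra\cdots\ra W_{0}\ra\s^{t}(\mathsf{e}(B))\ra 0$ with $W_{i}\in\add(W)$, and then feed it into Proposition \ref{prop1}: part $(2)$ yields an exact sequence resolving $\s^{t}(B)$ whose terms in degrees $0,\dots,n$ all lie in $\add(\mathsf{l}(W)\oplus P)=\add(U)$ — here one uses that $\mathsf{l}$ is additive so $\mathsf{l}(W_{i})\in\add(\mathsf{l}(W))$, that $X\in\add(\mathsf{l}(W_{0})\oplus Q)$, and that $\mathsf{Proj}\mathscr{B}=\add(P)$ — followed by a two-term tail involving $\s^{t+n}(\mathsf{i}(A))$ and $\s^{t+n+2}(\mathsf{iq}(B))$; when $\mathsf{q}$ is exact, part $(3)$ gives the same picture with only a one-term tail $\s^{t+n+1}(\mathsf{iq}(B))$. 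The rest of the argument is to control these tail terms via $d:=\mathsf{gl.dim}_{\mathscr{A}}\mathscr{B}$, using that $d$ bounds $\mathsf{pd}_{\mathscr{B}}\mathsf{i}(A')\le d$ for every $A'\in\mathscr{A}$; in particular $\mathsf{pd}_{\mathscr{B}}\mathsf{i}(A)\le d$ and $\mathsf{pd}_{\mathscr{B}}\mathsf{iq}(B)=\mathsf{pd}_{\mathscr{B}}\mathsf{i}(\mathsf{q}(B))\le d$.

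For $(2)$, when $n+t\ge d$, both $\s^{t+n}(\mathsf{i}(A))$ and $\s^{t+n+2}(\mathsf{iq}(B))$ are projective, hence lie in $\add(P)\subseteq\add(U)$, so the sequence of Proposition \ref{prop1}$(2)$ is already a resolution of $\s^{t}(B)$ of length $n+2$ with all terms in $\add(U)$; thus $\mathscr{B}$ is $(n+2,t)$-Igusa-Todorov. Likewise, for $(4)$, when $\mathsf{q}$ is exact and $n+t+1\ge d$, the term $\s^{t+n+1}(\mathsf{iq}(B))$ is projective and Proposition \ref{prop1}$(3)$ directly provides a resolution of length $n+1$; thus $\mathscr{B}$ is $(n+1,t)$-Igusa-Todorov.

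For $(1)$ and $(3)$ the tail has positive projective dimension and must be resolved and spliced on. In case $(3)$, with $\mathsf{q}$ exact and $n+t+1<d<\infty$, one has $\mathsf{pd}_{\mathscr{B}}\s^{t+n+1}(\mathsf{iq}(B))\le d-(t+n+1)$; choosing a projective resolution of this object of length $\le d-t-n-1$ with terms in $\add(P)$ and splicing it onto the leftmost term of the sequence in Proposition \ref{prop1}$(3)$ (into which it embeds as a subobject), one obtains an exact sequence of length at most $(d-t-n-1)+(n+1)=d-t$ with all terms in $\add(U)$, whence $\mathscr{B}$ is $(\mathsf{gl.dim}_{\mathscr{A}}\mathscr{B}-t,t)$-Igusa-Todorov. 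In case $(1)$, with $n+t<d<\infty$, I would truncate the sequence of Proposition \ref{prop1}$(2)$ at degree $n$ to get $0\ra G\ra F_{n}\ra\cdots\ra F_{0}\ra\s^{t}(B)\ra 0$ (with $F_{i}\in\add(U)$) together with the short exact sequence $0\ra\s^{t+n+2}(\mathsf{iq}(B))\ra\s^{t+n}(\mathsf{i}(A))\oplus Q_{n+1}\ra G\ra 0$; from $\mathsf{pd}_{\mathscr{B}}\s^{t+n}(\mathsf{i}(A))\le d-t-n$ and $\mathsf{pd}_{\mathscr{B}}\s^{t+n+2}(\mathsf{iq}(B))\le\max\{0,d-t-n-2\}$, the standard estimate for projective dimension along this short exact sequence gives $\mathsf{pd}_{\mathscr{B}}G\le d-t-n$; splicing a projective resolution of $G$ of length $\le d-t-n$ onto $F_{n}$ then yields a resolution of $\s^{t}(B)$ of length at most $(d-t-n)+(n+1)=d-t+1$ with all terms in $\add(U)$, whence $\mathscr{B}$ is $(\mathsf{gl.dim}_{\mathscr{A}}\mathscr{B}-t+1,t)$-Igusa-Todorov.

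The exactness bookkeeping in the splicings is routine: one only checks that $G$ (resp. $\s^{t+n+1}(\mathsf{iq}(B))$) is simultaneously a subobject of $F_{n}$ and a quotient of the top term of its projective resolution, so the concatenation remains exact; and the hypothesis $\mathsf{Proj}\mathscr{B}=\add(P)$ is precisely what allows $U$ to be chosen independently of $B$. The one step that needs genuine care — and which is exactly responsible for the extra $+1$ in $(1)$ as against $(3)$ — is the bound $\mathsf{pd}_{\mathscr{B}}G\le d-t-n$ for the cokernel $G$ in case $(1)$; I expect this to be the main (if minor) obstacle, everything else following formally from Proposition \ref{prop1} and the projective-dimension bounds supplied by $\mathsf{gl.dim}_{\mathscr{A}}\mathscr{B}$.
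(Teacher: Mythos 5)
Your proposal is correct and follows essentially the same route as the paper: feed the Igusa--Todorov sequence for $\s^{t}(\mathsf{e}(B))$ into Proposition \ref{prop1}, and in cases (1) and (3) bound the projective dimension of the tail (your $G$ is exactly the paper's cokernel $C$, and your ``standard estimate'' is the cited \cite[Lemma 4.2]{P}) before splicing on a projective resolution with terms in $\add(P)$. The length counts and the identification $\add(\mathsf{l}(W)\oplus P)=\add(U)$ all match the paper's argument.
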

\begin{proof}
Since $\mathscr{C}$ is an $(n,t)$-Igusa--Todorov category, it follows that for any $B\in \mathscr{B}$ there exist an object $W\in \mathscr{C}$ and an exact sequence
\begin{equation*}
0\ra W_{n}\ra W_{n-1}\ra \cdots \ra W_{0}\ra \s^{t}(\mathsf{e}(B))\ra 0
\end{equation*}
with $W_{i}\in \add(W)$ for $0\leq i\leq n$.
Then by Proposition \ref{prop1}, we obtain the following exact sequence
\begin{equation}\label{IT.17}
0\ra \s^{t+n+2}(\mathsf {iq}(B))\ra \s^{t+n}(\mathsf i(A))\oplus Q_{n+1}\ra \mathsf{l}(W_{n})\oplus Q_{n}\ra \cdots \ra X\oplus Q_{0}\ra \s^{t}(B)\ra 0
\end{equation}
for some $A\in \mathscr{A}$, where each $Q_{i}\in \mathsf{Proj}\mathscr{B}$ and $X\in \add(\mathsf{l}(W)\oplus P)$.
We denote by $C$ the cokernel of $\s^{t+n+2}(\mathsf{iq}(B))\ra \s^{t+n}(\mathsf{i}(A))\oplus Q_{n+1}.$
Thus we have the exact sequence
$$0\ra \s^{t+n+2}(\mathsf{iq}(B))\ra \s^{t+n}(\mathsf{i}(A))\oplus Q_{n+1}\ra C\ra 0.$$
We know from \cite[Lemma 4.2]{P} that
$$\mathsf{pd}_{\mathscr{B}}C\leq \mathsf{max}\{\mathsf{pd}_{\mathscr{B}}\s^{t+n+2}(\mathsf{iq}(B))+1,\mathsf{pd}_{\mathscr{B}}\s^{t+n}(\mathsf{i}(A))\}.$$

(1) Assume that $n+t<\mathsf{gl.dim}_{\mathscr{A}}\mathscr{B}<\infty$.
Then
$$\mathsf{pd}_{\mathscr{B}}C\leq \mathsf{max}\{\mathsf{pd}_{\mathscr{B}}\s^{t+n+2}(\mathsf{iq}(B))+1,\mathsf{pd}_{\mathscr{B}}\s^{t+n}(\mathsf{i}(A))\}
\leq\mathsf{gl.dim}_{\mathscr{A}}\mathscr{B}-(t+n).$$
We put $\mathsf{gl.dim}_{\mathscr{A}}\mathscr{B}=k$.
It follows that there is an exact sequence
$$0\ra P_{k-(t+n)}\ra \cdots \ra P_{1}\ra P_{0}\ra C\ra 0$$
with each $P_{i}\in \mathsf{Proj}\mathscr{B},$
which gives the following exact sequence
\begin{equation}\label{IT.20}
\scalebox{1}{$
0\ra P_{k-(t+n)}\ra\cdots \ra P_{1} \ra P_{0}\ra  \mathsf{l}(W_{n})\oplus Q_{n}\ra \cdots \ra \mathsf{l}(W_{1})\oplus Q_{1}\ra X\oplus Q_{0}\ra \s^{t}(B)\ra 0$}.
\end{equation}
Since $\mathsf{Proj}\mathscr{B}=\add(P)$ for some projective object $P$ and $X\in \add(\mathsf{l}(W)\oplus P)$, it follows that each term in \eqref{IT.20}, with the exception of $\s^{t}(B)$, belongs to $\add(P\oplus \mathsf{l}(W)).$
It implies that $\mathscr{B}$ is a $(\mathsf{gl.dim}_{\mathscr{A}}\mathscr{B}-t+1,t)$-Igusa--Todorov category.

(2) If $\mathsf{gl.dim}_{\mathscr{A}}\mathscr{B}\leq t+n,$ then $\s^{t+n+2}(\mathsf{iq}(B))$ and $\s^{t+n}(\mathsf{i}(A))$ are projective.
Since $\mathsf{Proj}\mathscr{B}=\add(P)$ by assumption, it follows that $\s^{t+n+2}(\mathsf{iq}(B))$ and $\s^{t+n}(\mathsf{i}(A))$ belong to $\add(P).$
Hence each term in \eqref{IT.17}, with the exception of $\s^{t}(B)$, belongs to $\add(\mathsf{l}(W)\oplus P)$. Hence $\mathscr{B}$ is an $(n+2,t)$-Igusa--Todorov category.

The assertions (3) and (4) can be similarly proved by using Proposition \ref{prop1}(3).
\end{proof}

\begin{corollary}\label{corollary:IT}
Let $(\mathscr{A},\mathscr{B},\mathscr{C})$ be a recollement of abelian categories with $\mathsf{Proj}\mathscr{B}=\add(P)$ for some projective object $P$ with $\mathsf{gl.dim}_{\mathscr{A}}\mathscr{B}<\infty$.
\begin{enumerate}
\item If $\sf l$ is exact, then
$\IT(\mathscr{B})\leq \IT(\mathscr{C
})+2.$
\item If both $\sf l$ and $\sf q$ are exact, then
$\IT(\mathscr{B})\leq \IT(\mathscr{C
})+1.$
\end{enumerate}
\end{corollary}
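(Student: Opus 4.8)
The plan is to read this off directly from Proposition \ref{thm1}. Both inequalities are vacuous when $\IT(\mathscr{C})=\infty$, so we may assume $n:=\IT(\mathscr{C})<\infty$. Since the infimum in Definition \ref{def:IT distance} is taken over a nonempty subset of the nonnegative integers, it is attained: there is a nonnegative integer $t_{0}$ for which $\mathscr{C}$ is an $(n,t_{0})$-Igusa-Todorov category. First I would record the elementary fact (the abelian-category analogue of Remark \ref{rem:IT}(3)) that an $(m,n)$-Igusa-Todorov category is automatically an $(m,n+i)$-Igusa-Todorov category for every $i\geq 0$: if $A$ is an $(n+i)$-syzygy object of some $X$, then truncating the defining resolution exhibits $A$ as an $n$-syzygy object of an $i$-syzygy object of $X$, so $\Omega^{n+i}(\mathscr{C})\subseteq\Omega^{n}(\mathscr{C})$ and the required $\add(U)$-resolutions carry over. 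Hence $\mathscr{C}$ is an $(n,t)$-Igusa-Todorov category for every $t\geq t_{0}$.

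For part (1), since $\mathsf{gl.dim}_{\mathscr{A}}\mathscr{B}<\infty$ I would choose $t\geq t_{0}$ large enough that $n+t\geq\mathsf{gl.dim}_{\mathscr{A}}\mathscr{B}$, and then invoke Proposition \ref{thm1}(2): as $\sf l$ is exact and $\mathscr{C}$ is $(n,t)$-Igusa-Todorov with $n+t\geq\mathsf{gl.dim}_{\mathscr{A}}\mathscr{B}$, it follows that $\mathscr{B}$ is an $(n+2,t)$-Igusa-Todorov category, whence $\IT(\mathscr{B})\leq n+2=\IT(\mathscr{C})+2$ by Definition \ref{def:IT distance}. For part (2), when $\sf q$ is also exact, I would instead choose $t\geq t_{0}$ with $n+t+1\geq\mathsf{gl.dim}_{\mathscr{A}}\mathscr{B}$ and apply Proposition \ref{thm1}(4), obtaining that $\mathscr{B}$ is an $(n+1,t)$-Igusa-Todorov category, so $\IT(\mathscr{B})\leq\IT(\mathscr{C})+1$.

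There is essentially no real obstacle; the only point requiring care is that the chosen parameters must be fed into parts (2) and (4) of Proposition \ref{thm1}, \emph{not} into (1) and (3). Parts (1) and (3) apply in the complementary range (roughly $n+t<\mathsf{gl.dim}_{\mathscr{A}}\mathscr{B}$), and there their conclusions give $\IT(\mathscr{B})\leq\mathsf{gl.dim}_{\mathscr{A}}\mathscr{B}-t+1$ (resp. $\mathsf{gl.dim}_{\mathscr{A}}\mathscr{B}-t$), which in that range is $\geq n+2$ and hence too weak. Enlarging the syzygy parameter $t$, which is legitimate by the observation in the first paragraph, is exactly the device that places us in the favorable cases (2)/(4), and this is precisely where the hypothesis $\mathsf{gl.dim}_{\mathscr{A}}\mathscr{B}<\infty$ enters.
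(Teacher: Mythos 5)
Your proposal is correct and follows essentially the same route as the paper: take $n=\IT(\mathscr{C})$, use the fact that an $(n,t)$-Igusa-Todorov category is $(n,t+i)$-Igusa-Todorov for all $i\geq 0$ (Remark \ref{rem:IT}(3)) to enlarge $t$ until $n+t\geq\mathsf{gl.dim}_{\mathscr{A}}\mathscr{B}$, and then apply Proposition \ref{thm1}(2) (resp.\ (4)). Your explicit justification of the syzygy-enlargement step at the level of abelian categories is a small but welcome addition, since the paper's remark is stated only for algebras.
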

\begin{proof}
We only prove (1).
Assume that $\IT(\mathscr{C})=n$.
Then $\mathscr{C}$ is an $(n,t)$-Igusa--Todorov category for some nonnegative integer $t.$
By Remark \ref{rem:IT}(3), we may assume that $n+t\geq \mathsf{gl.dim}_{\mathscr{A}}\mathscr{B}.$
Hence $\mathscr{B}$ is an $(n+2,t)$-Igusa--Todorov category, which implies that $\IT(\mathscr{B})\leq \IT(\mathscr{C
})+2.$
\end{proof}

The following results describe the Igusa--Todorov properties of $\mathscr{B}$ in terms of those of $\mathscr{A}$ and $\mathscr{C}.$
\begin{theorem}\label{main theorem}
Let $(\mathscr{A},\mathscr{B},\mathscr{C})$ be a recollement of abelian categories with $\mathsf{Proj}\mathscr{B}=\add(P)$ for some projective object $P$. Assume that $\mathscr{A}$ is an $(m,r)$-Igusa--Todorov category and $\mathscr{C}$ is an $(n,s)$-Igusa--Todorov category. Then the following statements hold.
\begin{enumerate}
\item If both $\sf l$ and $\sf p$ are exact, then $\mathscr{B}$ is a $(2m+n+2,\max\{r,s\})$-Igusa--Todorov category.
\item If $\sf l, p$ and $\sf q$ are exact, then $\mathscr{B}$ is an $(m+n+1,\max\{r,s\})$-Igusa--Todorov category.
\end{enumerate}
\end{theorem}
\begin{proof}
We set $t=\max\{r,s\}$.
Then $\mathscr{A}$ and $\mathscr{C}$ can be seen as an $(m,t)$-Igusa--Todorov category and an $(n,t)$-Igusa--Todorov category respectively by Remark \ref{rem:IT}.
Thus for any $B\in \mathscr{B}$ there exist an object $W\in \mathscr{C}$ and an exact sequence in $\mathscr{C}$
$$0\ra W_{n}\ra W_{n-1}\ra \cdots \ra W_{0}\ra \s^{t}(\mathsf e(B))\ra 0$$
with each $W_{i}\in \add(W).$

(1) By Proposition \ref{prop1}(2), there exists an exact sequence
$$0\ra \s^{t+n+2}(\mathsf {iq}(B))\ra \s^{t+n}(\mathsf i(A))\oplus Q_{n+1}\ra \mathsf{l}(W_{n})\oplus Q_{n}\ra \cdots \ra X\oplus Q_{0}\ra \s^{t}(B)\ra 0$$
for some $A\in \mathscr{A}$, where each $Q_{i}\in \mathsf{Proj}\mathscr{B}$ and $X\in \add(\mathsf{l}(W)\oplus P)$.
We denote by $C$ the cokernel of $\s^{t+n+2}(\mathsf{iq}(B))\ra \s^{t+n}(\mathsf{i}(A))\oplus Q_{n+1}$. Then we have exact sequences
\begin{equation}\label{IT.15}
0\ra\s^{t+n+2}(\mathsf{iq}(B))\ra \s^{t+n}(\mathsf{i}(A))\oplus Q_{n+1}\ra C\ra 0.
\end{equation}
and
\begin{equation*}
0\ra C\ra \mathsf{l}(W_{n})\oplus Q_{n}\ra \cdots \ra X\oplus Q_{0}\ra \s^{t}(B)\ra 0.
\end{equation*}
We mention that $\sf p$ is exact by assumption and $(\mathsf{i},\mathsf{p})$ is an adjoint pair. Hence $\mathsf{i}$ preserves projectives.
Since $\sf i$ is exact, it follows from Lemma \ref{lem-exact-syzygy} that
$$\s^{t+n}(\mathsf{i}(A))\oplus P'\cong  \mathsf{i}(\s^{t+n}(A))\oplus P''$$
for some projective objects $P'$ and $P''$ in $\mathscr{B}$.
Note that $\mathscr{A}$ is an $(m,t)$-Igusa--Todorov category by assumption.
Then there exist an object $V\in \mathscr{A}$ and an exact sequence
\begin{equation}\label{IT.13}
0\ra V_{m}\ra \cdots\ra V_{1}\ra V_{0}\ra \s^{t+n}(A)\ra 0
\end{equation}
with each $V_{i}\in \add(V)$ for $0\leq i\leq m$.
Applying $\sf i$ to \eqref{IT.13} we obtain the following exact sequence
\begin{equation}\label{IT.18}
0\ra \mathsf{i}(V_{m})\ra \cdots\ra \mathsf{i}(V_{1})\ra \mathsf{i}(V_{0})\oplus P''\ra \s^{t+n}(\mathsf{i}(A))\oplus P'\ra 0.
\end{equation}
Note that we have the exact sequence
\begin{equation}\label{IT.15}
0\ra\s^{t+n+2}(\mathsf{iq}(B))\oplus P'\ra \s^{t+n}(\mathsf{i}(A))\oplus Q_{n+1}\oplus P'\ra C\ra 0.
\end{equation}
Applying Lemma \ref{IT-lem2} to \eqref{IT.18} and \eqref{IT.15}, we have the exact sequence
$$0\ra \s^{t+n+m+2}(\mathsf{iq}(B))\ra \mathsf{i}(V_{m})\oplus P_{m}\ra \cdots \ra \mathsf{i}(V_{0})\oplus P_{0}\ra C\ra 0$$
with each $P_{i}$ belonging to $\mathsf{Proj}\mathscr{B}.$
On the other hand, since $\sf i$ is exact and preserves projectives, it follows from Lemma \ref{lem-exact-syzygy} that
 $$\s^{t+n+m+2}(\mathsf{iq}(B))\oplus Q\cong \mathsf{i}(\s^{t+n+m+2}(\mathsf{q}(B)))\oplus Q'$$
 for some projective objects $Q$ and $Q'$ in $\mathscr{B}.$
Since $\mathscr{A}$ is an $(m,t)$-Igusa--Todorov category, it follows that there exists an exact sequence
\begin{equation}\label{IT.16}
0\ra V'_{m}\ra \cdots \ra V'_{1}\ra V'_{0}\ra \s^{t+n+m+2}(\mathsf{q}(B))\ra 0
\end{equation}
with each $V'_{i}\in \add(V)$.
Applying $\mathsf{i}$ to \eqref{IT.16} we obtain the follwing exact sequence
\begin{equation}\label{IT.19}
0\ra \mathsf{i}(V'_{m})\ra \cdots \ra \mathsf{i}(V'_{1})\ra \mathsf{i}(V'_{0})\oplus Q'\ra \s^{t+n+m+2}(\mathsf{iq}(B))\oplus Q\ra 0.
\end{equation}
Therefore we have the exact sequence
\begin{multline*}
0\ra \mathsf{i}(V'_{m})\ra \cdots\ra \mathsf{i}(V'_{1})\ra \mathsf{i}(V'_{0})\oplus Q'\\
\ra \mathsf{i}(V_{m})\oplus P_{m}\oplus Q\ra \cdots \ra \mathsf{i}(V_{0})\oplus P_{0}\ra \mathsf{l}(W_{n})\oplus Q_{n}\ra \cdots \ra \mathsf{l}(W_{0})\oplus Q_{0}\ra \s^{t}(B)\ra 0.
\end{multline*}
Note that $\mathsf{Proj}\mathscr{B}=\add(P)$ for some projective object $P$ by assumption.
Therefore we conclude that $\mathscr{B}$ is a $(2m+n+2,\max\{r,s\})$-Igusa--Todorov category.

(2) By Proposition \ref{prop1}(3), we have the exact sequence
\begin{equation}\label{IT.25}
0\ra \s^{t+n+1}(\mathsf{iq}(B))\ra\mathsf{l}(W_{n})\oplus P_{n}\ra \cdots \ra \mathsf{l}(W_{1})\oplus P_{1}\ra X\oplus P_{0}\ra \s^{t}(B)\ra 0
\end{equation}
with $X\in \add(\mathsf{l}(W_{0})\oplus P)$ and each $P_{i}\in \mathsf{Proj}\mathscr{B}$.
Since $\sf p$ is exact and $(\sf i, p)$ is an adjoint pair, it follows that $\sf i$ preserves projective objects.
Thus by Lemma \ref{lem-exact-syzygy}, we have an isomorphism $$\mathsf{i}(\s^{t+n+1}(\mathsf{q}(B)))\oplus Q'\cong \s^{t+n+1}(\mathsf{iq}(B))\oplus Q$$ for some projective objects $Q$ and $Q'$ in $\mathscr{B}$.
We mention that $\mathscr{A}$ is an $(m,t)$-Igusa--Todorov category. Thus
there exist an object $V\in \mathscr{A}$ and an exact sequence
\begin{equation}\label{IT.23}
0\ra V'_{m}\ra \cdots\ra V'_{1}\ra V'_{0}\ra \s^{t+n+1}(\mathsf{q}(B))\ra 0
\end{equation}
with each $V'_{i}\in \add(V)$ for $0\leq i\leq m$. Applying $\sf i$ to \eqref{IT.23}, we get the following exact sequence
\begin{equation}\label{IT.24}
0\ra \mathsf{i}(V'_{m})\ra \cdots\ra \mathsf{i}(V'_{1})\ra \mathsf{i}(V'_{0})\oplus Q'\ra \s^{t+n+1}(\mathsf{iq}(B))\oplus Q\ra 0.
\end{equation}
Connecting \eqref{IT.25} and \eqref{IT.24} together, we obtain the exact sequence
\begin{equation}\label{IT.26}
0\ra \mathsf{i}(V'_{m})\ra\cdots\ra\mathsf{i}(V'_{0})\oplus Q'\ra\mathsf{l}(W_{n})\oplus P_{n}\oplus Q\ra \cdots \ra \mathsf{l}(W_{1})\oplus P_{1}\ra X\oplus P_{0}\ra \s^{t}(B)\ra 0.
\end{equation}
Since $\mathsf{Proj}\mathscr{B}=\add(P)$ for some projective object $P$, it follows that each term in \eqref{IT.26}, with the exception of $\s^{t}(B)$, belongs to $\add(\mathsf{l}(W)\oplus \mathsf{i}(V)\oplus P)$. Consequently $\mathscr{B}$ is an $(n+m+1,t)$-Igusa--Todorov category.
\end{proof}

\begin{corollary}\label{main theorem1}
Let $(\mathscr{A},\mathscr{B},\mathscr{C})$ be a recollement of abelian categories with $\mathsf{Proj}\mathscr{B}=\add(P)$ for some projective object $P$.
Then the following statements hold.
\begin{enumerate}
\item Assume that both $\sf l$ and $\sf p$ are exact.
Then $$\IT(\mathscr{B})\leq2\IT(\mathscr{A})+\IT(\mathscr{C})+2.$$
\item Assume that $\sf l, q$ and $\sf p$ are exact. Then $$\IT(\mathscr{B})\leq\IT(\mathscr{A})+\IT(\mathscr{C})+1.$$
\end{enumerate}
\end{corollary}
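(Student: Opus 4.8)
The plan is to deduce both inequalities directly from Theorem \ref{main theorem} by unwinding Definition \ref{def:IT distance}, so the argument is short. First I would dispose of the trivial case: if $\IT(\mathscr{A})=\infty$ or $\IT(\mathscr{C})=\infty$, then the right-hand side of each inequality is $\infty$ and there is nothing to prove. So assume $m:=\IT(\mathscr{A})$ and $n:=\IT(\mathscr{C})$ are both finite. Since by Definition \ref{def:IT distance} the Igusa-Todorov distance is an infimum over a (nonempty) set of nonnegative integers, that infimum is attained; hence there exist nonnegative integers $r$ and $s$ such that $\mathscr{A}$ is an $(m,r)$-Igusa-Todorov category and $\mathscr{C}$ is an $(n,s)$-Igusa-Todorov category.

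For part (1), I would apply Theorem \ref{main theorem}(1) to these data: since $\sf l$ and $\sf p$ are exact and $\mathsf{Proj}\mathscr{B}=\add(P)$, the theorem yields that $\mathscr{B}$ is a $(2m+n+2,\max\{r,s\})$-Igusa-Todorov category. By Definition \ref{def:IT distance} this forces $\IT(\mathscr{B})\leq 2m+n+2=2\IT(\mathscr{A})+\IT(\mathscr{C})+2$. For part (2), I would run the identical argument using Theorem \ref{main theorem}(2) instead, invoking in addition that $\sf q$ is exact: $\mathscr{B}$ is an $(m+n+1,\max\{r,s\})$-Igusa-Todorov category, whence $\IT(\mathscr{B})\leq m+n+1=\IT(\mathscr{A})+\IT(\mathscr{C})+1$.

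There is essentially no obstacle; the corollary is a repackaging of Theorem \ref{main theorem}. The only two points deserving a word of care are (a) the reduction to the case of finite distances, and (b) the fact that the syzygy indices $r$ and $s$ produced for $\mathscr{A}$ and $\mathscr{C}$ need not coincide — this is harmless because Theorem \ref{main theorem} already absorbs them into the single index $\max\{r,s\}$, which in turn rests on Remark \ref{rem:IT}(3), namely that an $(m,n)$-Igusa-Todorov category is also an $(m,n+i)$-Igusa-Todorov category for every $i\geq 0$.
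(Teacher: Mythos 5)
Your proposal is correct and follows exactly the paper's own argument: reduce to the case where both distances are finite, pick witnessing integers $r$ and $s$, and apply Theorem \ref{main theorem}(1) and (2) respectively. The extra remarks on attainment of the infimum and on absorbing $r,s$ into $\max\{r,s\}$ via Remark \ref{rem:IT}(3) are sound and merely make explicit what the paper leaves implicit.
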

\begin{proof}
We may assume that $\IT(\mathscr{A})=m<\infty$ and $\IT(\mathscr{C})=n<\infty$.
Then $\mathscr{A}$ is an $(m,r)$-Igusa--Todorov category and $\mathscr{C}$ is an $(n,s)$-Igusa--Todorov category for some $r$ and $s.$

(1) By Proposition \ref{main theorem}(1), $\mathscr{B}$ is a $(2m+n+2,\max\{r,s\})$-Igusa--Todorov category.
Hence $\IT(\mathscr{B})\leq2\IT(\mathscr{A})+\IT(\mathscr{C})+2.$

(2) By Proposition \ref{main theorem}(2), $\mathscr{B}$ is a $(m+n+1,\max\{r,s\})$-Igusa--Todorov category.
Hence $\IT(\mathscr{B})\leq\IT(\mathscr{A})+\IT(\mathscr{C})+1.$
\end{proof}

\begin{proposition}\label{prop2}
Let $(\mathscr{A},\mathscr{B},\mathscr{C})$ be a recollement of abelian categories with $\mathsf{Proj}\mathscr{B}=\add(P)$ for some projective object. If $\mathsf{l}$ is exact,
then $\IT(\mathscr{C})\leq\IT(\mathscr{B}).$
\end{proposition}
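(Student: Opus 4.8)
The plan is to prove the sharper assertion that whenever $\mathscr{B}$ is an $(m,n)$-Igusa-Todorov category, so is $\mathscr{C}$, with the \emph{same} pair $(m,n)$; passing to the infimum over such $m$ then yields $\IT(\mathscr{C})\leq\IT(\mathscr{B})$. We may assume $\IT(\mathscr{B})=m<\infty$, so that there are a nonnegative integer $n$ and an object $U\in\mathscr{B}$ such that every $A\in\s^{n}(\mathscr{B})$ admits an exact sequence $0\to U_{m}\to\cdots\to U_{0}\to A\to 0$ with $U_{i}\in\add(U)$ for all $i$.

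First I would transport an $n$-syzygy object from $\mathscr{C}$ up to $\mathscr{B}$ along $\mathsf{l}$. Given $C\in\mathscr{C}$, choose an exact sequence $0\to\s^{n}(C)\to Q_{n-1}\to\cdots\to Q_{0}\to C\to 0$ with each $Q_{i}$ projective in $\mathscr{C}$. Since $\mathsf{l}$ is exact by hypothesis and preserves projective objects by Lemma \ref{lem of recollement}(4), applying $\mathsf{l}$ yields an exact sequence $0\to\mathsf{l}(\s^{n}(C))\to\mathsf{l}(Q_{n-1})\to\cdots\to\mathsf{l}(Q_{0})\to\mathsf{l}(C)\to 0$ in which each $\mathsf{l}(Q_{i})$ is projective; hence $\mathsf{l}(\s^{n}(C))$ is an $n$-syzygy object of $\mathsf{l}(C)$, that is, $\mathsf{l}(\s^{n}(C))\in\s^{n}(\mathscr{B})$.

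Then I would feed $\mathsf{l}(\s^{n}(C))$ into the $(m,n)$-Igusa-Todorov property of $\mathscr{B}$ to obtain an exact sequence $0\to U_{m}\to\cdots\to U_{0}\to\mathsf{l}(\s^{n}(C))\to 0$ with $U_{i}\in\add(U)$, and apply $\mathsf{e}$, which is exact by Lemma \ref{lem of recollement}(1). This produces an exact sequence $0\to\mathsf{e}(U_{m})\to\cdots\to\mathsf{e}(U_{0})\to\mathsf{el}(\s^{n}(C))\to 0$, and $\mathsf{el}(\s^{n}(C))\cong\s^{n}(C)$ since the unit $\mathrm{Id}_{\mathscr{C}}\to\mathsf{el}$ is invertible by Lemma \ref{lem of recollement}(3). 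As $\mathsf{e}$ is additive, $\mathsf{e}(U_{i})\in\add(\mathsf{e}(U))$ for every $i$, so with $V:=\mathsf{e}(U)$ we get $0\to V_{m}\to\cdots\to V_{0}\to\s^{n}(C)\to 0$ with $V_{i}\in\add(V)$, exhibiting $\mathscr{C}$ as an $(m,n)$-Igusa-Todorov category. The argument is essentially formal once the first step is in place; the one point deserving a word of care is that the witnessing object $V$ must be chosen uniformly in $C$, and this is automatic here because $U$ depends only on $\mathscr{B}$ and $V=\mathsf{e}(U)$. In particular the hypothesis $\mathsf{Proj}\mathscr{B}=\add(P)$ is not needed for this inequality.
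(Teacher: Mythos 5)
Your proof is correct and follows the same basic route as the paper's: transport the syzygy along $\mathsf{l}$, invoke the Igusa--Todorov property of $\mathscr{B}$, and push the resulting resolution back down with the exact functor $\mathsf{e}$ using $\mathsf{el}\cong\mathrm{Id}_{\mathscr{C}}$. The one genuine (and welcome) difference is in the middle step: the paper applies the IT property of $\mathscr{B}$ to $\s^{k}(\mathsf{l}(C))$ and must then reconcile this with $\mathsf{l}(\s^{k}(C))$ via Lemma \ref{lem-exact-syzygy}, which introduces projective summands that have to be stripped off by a pullback and ultimately forces the witnessing object to be enlarged to $\mathsf{e}(U\oplus P)$, whereas you observe directly that $\mathsf{l}(\s^{n}(C))$ is itself an $n$-syzygy object in $\mathscr{B}$ and feed it straight into the IT property, so no projective correction is needed and the witness is simply $\mathsf{e}(U)$. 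This streamlines the argument and, as you rightly point out, shows that the hypothesis $\mathsf{Proj}\mathscr{B}=\add(P)$ is not needed for this particular inequality.
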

\begin{proof}
We may assume that $\IT(\mathscr{B})=m<\infty$ and $\mathscr{B}$ is an $(m,k)$-Igusa--Todorov category for some $k$.
Then for any $C\in \mathscr{C}$ there exist an object $U\in\mathscr{B}$ and an exact sequence in $\mathscr{B}$
\begin{equation}
0\ra U_{m}\ra \cdots \ra U_{1}\ra U_{0}\ra \s^{k}(\mathsf{l}(C))\ra 0
\end{equation}
with $U_{i}\in \add(U)$ for $0\leq i\leq m$.
Since $\sf l$ is exact and preserves projectives, it follows from Lemma \ref{lem-exact-syzygy} that
 $\s^{k}(\mathsf{l}(C))\oplus P'\cong\mathsf{l}(\s^{k}(C))\oplus P''$ for some projective objects $P'$ and $P''$ in $\mathscr{B}$.
Thus we have the following exact sequence
\begin{equation}\label{IT.27}
0\ra U_{m}\ra \cdots \ra U_{1}\ra U_{0}\oplus P'\ra \mathsf{l}(\s^{k}(C))\oplus P''\ra 0.
\end{equation}
We denote by $K$ the kernel of $U_{0}\oplus P'\ra \mathsf{l}(\s^{k}(C))\oplus P''$.
Then we obtain exact sequences
\begin{equation}\label{IT.28}
0\ra U_{m}\ra \cdots \ra U_{1}\ra K\ra 0
\end{equation}
and
\begin{equation*}
0\ra K\ra U_{0}\oplus P'\ra \mathsf{l}(\s^{k}(C))\oplus P''\ra 0.
\end{equation*}
Consider the following pullback diagram:
$$\xymatrix{&&0\ar[d]&0\ar[d]\\
		0\ar[r]&K\ar@{=}[d]\ar[r]&X\ar[d]\ar[r]&\mathsf{l}(\s^{k}(C)\ar[d]\ar[r]&0\\
		0\ar[r]&K\ar[r]&U_{0}\oplus P' \ar[d]\ar[r]&\mathsf{l}(\s^{k}(C))\oplus P''\ar[d] \ar[r]&0\\
		 &&P''\ar[d]\ar@{=}[r]&P''\ar[d]&& \\&&0&0}$$
Thus we obtain the exact sequence
\begin{equation}\label{IT.29}
0\ra K\ra X\ra \mathsf{l}(\s^{k}(C))\ra 0
\end{equation}
such that $X$ is a direct summand of $\mathsf{l}(U_{0})\oplus P'$ as $P''$ is projective.
Splicing \eqref{IT.28} with \eqref{IT.29} together, we obtain the following exact sequence
\begin{equation}\label{IT.30}
0\ra U_{m}\ra \cdots \ra U_{1}\ra X\ra \mathsf{l}(\s^{k}(C))\ra 0.
\end{equation}
Since $\sf e$ is exact, applying $\mathsf{e}$ to \eqref{IT.30}, we get the following exact sequence in $\mathscr{C}$
\begin{equation}\label{IT.31}
0\ra \mathsf{e}(U_{m})\ra \cdots \ra \mathsf{e}(U_{1})\ra \mathsf{e}(X)\ra \s^{k}(C)\ra 0.
\end{equation}
Since $\mathsf{Proj}\mathscr{B}=\add(P)$ and $X$ is a direct summand of $\mathsf{l}(U_{0})\oplus P'$, it follows that each term in \eqref{IT.31}, with the exception of $\s^{k}(C)$, belongs to $\add(\mathsf{e}(U\oplus P))$.
Therefore, we conclude that $\IT(\mathscr{C})\leq\IT(\mathscr{B}).$
\end{proof}

As a consequence of Corollary \ref{corollary:IT}, Theorem \ref{main theorem1} and  Proposition \ref{prop2}, we have the following.
\begin{corollary}\label{corollary:main}
Let $(\mathscr{A},\mathscr{B},\mathscr{C})$ be a recollement of abelian categories with $\mathsf{Proj}\mathscr{B}=\add(P)$ for some projective object. Assume that $\mathsf{l}$ is exact.
\begin{enumerate}
\item If $\mathsf{gl.dim}_{\mathscr{A}}\mathscr{B}<\infty,$ then
$$\IT(\mathscr{C})\leq\IT(\mathscr{B})\leq\IT(\mathscr{C})+2.$$
Moreover, if $\sf q$ is exact, then $$\IT(\mathscr{C})\leq\IT(\mathscr{B})\leq\IT(\mathscr{C})+1.$$
\item If  $\mathsf{p}$ is exact, then
$$\IT(\mathscr{C})\leq\IT(\mathscr{B})\leq 2\IT(\mathscr{A})+\IT(\mathscr{C})+2.$$
Moreover, if $\mathsf{q}$ is exact, then
$$\IT(\mathscr{C})\leq\IT(\mathscr{B})\leq \IT(\mathscr{A})+\IT(\mathscr{C})+1.$$
\end{enumerate}
\end{corollary}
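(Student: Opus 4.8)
The plan is to deduce the four displayed chains of inequalities directly from the three results already established, namely Proposition~\ref{prop2} (which supplies every lower bound), Corollary~\ref{corollary:IT} (which supplies the upper bounds in part~(1)), and Corollary~\ref{main theorem1} (which supplies the upper bounds in part~(2)). No new homological construction is required; the entire argument is a matter of checking that the hypotheses of each cited statement are in force in the situation at hand.

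First I would dispose of the lower bound $\IT(\mathscr{C})\le\IT(\mathscr{B})$, which occurs in all four chains. The standing hypotheses of the corollary are exactly $\mathsf{Proj}\mathscr{B}=\add(P)$ together with $\mathsf{l}$ exact, and these are precisely the hypotheses of Proposition~\ref{prop2}; hence this inequality is immediate and need not be revisited in either part.

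For part~(1), the additional hypothesis is $\mathsf{gl.dim}_{\mathscr{A}}\mathscr{B}<\infty$, so Corollary~\ref{corollary:IT} applies: its part~(1) gives $\IT(\mathscr{B})\le\IT(\mathscr{C})+2$, and when $\mathsf{q}$ is moreover exact its part~(2) upgrades this to $\IT(\mathscr{B})\le\IT(\mathscr{C})+1$. Combining with the lower bound already obtained yields the two displayed chains of part~(1). For part~(2), the additional hypothesis is that $\mathsf{p}$ is exact, so together with the standing assumption that $\mathsf{l}$ is exact we are in the setting of Corollary~\ref{main theorem1}: its part~(1) gives $\IT(\mathscr{B})\le 2\IT(\mathscr{A})+\IT(\mathscr{C})+2$, and when $\mathsf{q}$ is also exact its part~(2) gives $\IT(\mathscr{B})\le\IT(\mathscr{A})+\IT(\mathscr{C})+1$; both inequalities are vacuous when $\IT(\mathscr{A})$ or $\IT(\mathscr{C})$ is infinite, which is harmless. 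Again combining with Proposition~\ref{prop2} produces the two displayed chains of part~(2).

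The only point that demands any attention — and it is not really an obstacle — is the bookkeeping of which exactness hypotheses ($\mathsf{l}$, $\mathsf{p}$, $\mathsf{q}$) and which finiteness hypothesis are active in each of the four cases, so that the correct clause of the correct cited result is invoked each time. Once that correspondence is laid out, the proof is a two-line concatenation of inequalities in each case.
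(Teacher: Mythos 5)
Your proposal is correct and matches the paper exactly: the paper derives this corollary precisely by combining Proposition~\ref{prop2} for the lower bounds, Corollary~\ref{corollary:IT} for the upper bounds in part~(1), and Corollary~\ref{main theorem1} for those in part~(2). Nothing further is needed.
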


\section{\bf Applications and examples}

In this section, we apply our results to ring theory, building on Examples \ref{idempotent} and \ref{ex:Morita ring}.

For an Artin algebra $\alg$, the Igusa--Todorov distance  of $\modL$ is denoted by $\IT(\alg)$.
We use $_{\alg}M$ (resp., $M_{\alg}$) to denote a left (resp., right) $\alg$-module $M,$ and denote the projective dimension of $_{\alg}M$ (resp., $M_{\alg}$) by $\mathrm{pd}_{\alg}M$ (resp., $\mathrm{pd}M_{\alg})$.
We denote by $\mathsf{gl.dim}\alg=\sup\{\mathsf{pd}X_{\alg}\mid X\in \mathrm{mod}\text{-}\alg\}.$
Following \cite[Section 8]{P}, we denote by $\mathsf{gl.dim}_{\alg/\alg e\alg}\alg=\sup\{\mathsf{pd}X_{\alg}\mid X\in \mathrm{mod}\text{-}\alg/\alg e\alg\}.$

The following corollary summarizes some results from Section 3, applied to the recollement of module categories  $(\modLe,\modL,\modeLe)$ induced by an idempotent $e=e^{2}$ in an Artin algebra $\alg$.

\begin{corollary} \label{coro:app}
Let $\alg$ be an Artin algebra and let $e=e^{2}$ be an idempotent in $\alg.$ Assume that $e\alg$ is a projective left $e\alg e$-module. Then the following statements hold.
\begin{enumerate}
\item Assume that $\alg/\alg e\alg$ is an $(m,r)$-Igusa--Todorov algebra and $e\alg e$ is an $(n,s)$-Igusa--Todorov algebra. If $\alg/\alg e\alg$ is a projective right $\alg$-module, then $\alg$ is a $(2m+n+2,\max\{r,s\})$-Igusa--Todorov algebra.
\item Assume that $\alg/\alg e\alg$ is an $(m,r)$-Igusa--Todorov algebra and $e\alg e$ is an $(n,s)$-Igusa--Todorov algebra. If $\alg/\alg e\alg$ is a projective left $\alg$-module and $\alg/\alg e\alg$ is a projective right $\alg$-module, then $\alg$ is an $(m+n+1,\max\{r,s\})$-Igusa--Todorov algebra.
\item Assume that $\alg/\alg e\alg$  and $e\alg e$ are syzygy-finite algebras.  If $\alg/\alg e\alg$ is a projective left $\alg$-module and $\alg/\alg e\alg$ is a projective right $\alg$-module, then $\alg$ is an Igusa--Todorov algebra.

\item If $\alg/\alg e\alg$ is a projective right $\alg $-module, then
$$\IT(e\alg e)\leq\IT(\alg)\leq 2\IT(\alg/\alg e\alg)+\IT(e\alg e)+2.$$
 Moreover, if $\alg/\alg e\alg$ is a projective left $\alg$-module, then
$$\IT(e\alg e)\leq\IT(\alg)\leq \IT(\alg/\alg e\alg)+\IT(e\alg e)+1.$$

\item If $\mathsf{gl.dim}_{\alg/\alg e\alg}\alg<\infty$, then
$$\IT(e\alg e)\leq\IT(\alg)\leq\IT(e\alg e)+2.$$
Moreover, if $\alg/\alg e\alg$ is a projective left $\alg$-module, then $$\IT(e\alg e)\leq\IT(\alg)\leq\IT(e\alg e)+1.$$
\end{enumerate}
\end{corollary}

\begin{remark}
 The problem of determining Igusa--Todorov properties for an Artin algebra $\alg$ is generally difficult. The preceding corollary suggests a way forward: select an idempotent $e\in \alg$ for which $e\alg e$ and $\alg/\alg e\alg$ possess ``good" Igusa--Todorov properties and then applying recollements of module categories  to deduce the Igusa--Todorov properties for $\alg$.
\end{remark}

\begin{example} \rm
We provide an instance for Corollary \ref{coro:app}(4).
For each quiver in this example, the composition of two arrows $\alpha$ and $\beta$ (the ending point $\mathfrak{t}(\alpha)$ of $\alpha$ and the starting point $\mathfrak{s}(\beta)$ of $\beta$ coincide) is written as $\alpha\beta$, c.f. \cite{ASS2006}.
Take $\alg_{\mathrm{I}}$ the bound quiver algebra given by
\begin{center}
\begin{tikzpicture}
\draw (-2,0) node{$Q_{\mathrm{I}}=$};
\draw (3,0) node{$I_{\mathrm{I}} = \langle x^2 \rangle$, };
\draw[->][line width=0.7pt][rotate = 10] (1,0) arc(0:100:1);
\draw[->][line width=0.7pt][rotate =130] (1,0) arc(0:100:1);
\draw[->][line width=0.7pt][rotate =250] (1,0) arc(0:100:1);
\foreach \x in {0,120,240}
\draw [rotate =\x] (1,0) node{$\bullet$};
\foreach \x in {0,120,240}
\draw [rotate =\x+60] (1.25,0) node{$x$};
\end{tikzpicture}
\end{center}
and $\alg_{\mathrm{II}}$ the hereditary quiver algebra given by $\xymatrix{\bullet \ar[r] & \bullet}$.
Let $\alg := \alg_{\mathrm{I}} \otimes_{K} \alg_{\mathrm{II}}$, where $K$ is an algebraically closed field.
Then the bound quiver of $\alg$ is
\begin{center}
\begin{tikzpicture}
\draw (-3,0) node[left]{$Q_{\mathrm{II}}=$};
\draw[->][line width=0.7pt][rotate = 10] (1,0) arc(0:100:1);
\draw[->][line width=0.7pt][rotate =130] (1,0) arc(0:100:1);
\draw[->][line width=0.7pt][rotate =250] (1,0) arc(0:100:1);
\draw[->][line width=0.7pt][rotate = 10] (2,0) arc(0:100:2);
\draw[->][line width=0.7pt][rotate =130] (2,0) arc(0:100:2);
\draw[->][line width=0.7pt][rotate =250] (2,0) arc(0:100:2);
\draw [rotate =  0] (1,0) node{$1$};
\draw [rotate =120] (1,0) node{$2$};
\draw [rotate =240] (1,0) node{$3$};
\draw [rotate =  0] (2,0) node{$1'$};
\draw [rotate =120] (2,0) node{$2'$};
\draw [rotate =240] (2,0) node{$3'$};
\draw [rotate =  0+60] (1.25,0) node{$a_1$};
\draw [rotate =120+60] (1.25,0) node{$a_2$};
\draw [rotate =240+60] (1.25,0) node{$a_3$};
\draw [rotate =  0+60] (2.25,0) node{$b_1$};
\draw [rotate =120+60] (2.25,0) node{$b_2$};
\draw [rotate =240+60] (2.25,0) node{$b_3$};
\draw [rotate =  0]     (1.5,.2) node{$c_1$};
\draw [rotate =120]     (1.5,.2) node{$c_2$};
\draw [rotate =240]     (1.5,.2) node{$c_3$};
\draw [rotate =  0][line width=0.7pt][->] (1.75,0) -- (1.15,0);
\draw [rotate =120][line width=0.7pt][->] (1.75,0) -- (1.15,0);
\draw [rotate =240][line width=0.7pt][->] (1.75,0) -- (1.15,0);
\draw (0,-2.5) node[below]{$I_{\mathrm{II}}=\langle a_1a_2, a_2a_3, a_3a_1, b_1b_2, b_2b_3, b_3b_1,
b_1c_2-c_1a_1, b_2c_3-c_2a_2, b_3c_1-c_3a_3 \rangle.$};
\end{tikzpicture}
\end{center}
Let $e = e_{1'}+e_{2'}+e_{3'}$ (for any vertex $v$, $e_v$ is the primitive idempotent corresponding to $v$). Then
\[ e\alg = Ke_1 + Ke_2 + Ke_3 + Ka_1 + Ka_2 + Ka_3 \]
is both a left $e\alg e$-module and a right $\alg$-module.
Notice that $e$ is the identity element of $e\alg e$. Then for any left $e\alg e$-module $M$,
the decomposition $M = eM = (e_1+e_2+e_3)M = e_1M \oplus e_2M \oplus e_3M$ provides a quiver representation of $M$ as
\begin{center}
\begin{tikzpicture}[scale=1.3]
\draw[<-][line width=0.7pt][rotate = 10] (1,0) arc(0:95:1);
\draw[<-][line width=0.7pt][rotate =130] (1,0) arc(0:95:1);
\draw[<-][line width=0.7pt][rotate =250] (1,0) arc(0:95:1);
\draw [rotate =  0] (1,0) node{\footnotesize$e_1M$};
\draw [rotate =120] (1,0) node{\footnotesize$e_2M$};
\draw [rotate =240] (1,0) node{\footnotesize$e_3M.$};
\draw [rotate =  0+60] (1.35,0) node{\footnotesize$a_1\cdot(-)$};
\draw [rotate =120+60] (1.55,0) node{\footnotesize$a_2\cdot(-)$};
\draw [rotate =240+60] (1.35,0) node{\footnotesize$a_3\cdot(-)$};
\end{tikzpicture}
\end{center}
Here, the quiver of $e\alg e$ equals to $Q_{\mathrm{I}}$, and we need to reverse the direction of the arrow in the quiver representation of left $e\alg e$-modules $M$.
Thus, one can check that the quiver representation of the left $e\alg e$-module $e\alg$ is
\begin{center}
\begin{tikzpicture}[scale=1.3]
\draw (1.5,0) node{$\cong$};
\draw[<-][line width=0.7pt][rotate = 10] (1,0) arc(0:95:1);
\draw[<-][line width=0.7pt][rotate =130] (1,0) arc(0:95:1);
\draw[<-][line width=0.7pt][rotate =250] (1,0) arc(0:95:1);
\draw [rotate =  0] (1,0) node{\footnotesize$e_{1'}\alg$};
\draw [rotate =120] (1,0) node{\footnotesize$e_{2'}\alg$};
\draw [rotate =240] (1,0) node{\footnotesize$e_{3'}\alg$};
\draw [rotate =  0+60] (1.35,0) node{\footnotesize$b_1\cdot(-)$};
\draw [rotate =120+60] (1.55,0) node{\footnotesize$b_2\cdot(-)$};
\draw [rotate =240+60] (1.35,0) node{\footnotesize$b_3\cdot(-)$};
\end{tikzpicture}
\begin{tikzpicture}[scale=1.3]
\draw[<-][line width=0.7pt][rotate = 10] (1,0) arc(0:95:1);
\draw[<-][line width=0.7pt][rotate =130] (1,0) arc(0:95:1);
\draw[<-][line width=0.7pt][rotate =250] (1,0) arc(0:95:1);
\draw [rotate =  0] (1,0) node{$K^4$,};
\draw [rotate =120] (1,0) node{$K^4$};
\draw [rotate =240] (1,0) node{$K^4$};
\draw [rotate =  0+60] (1.45,0)
  node{\footnotesize
  $\left[ \begin{smallmatrix}
    0 & 0 & 0 & 0 \\
    1 & 0 & 0 & 0 \\
    0 & 1 & 0 & 0 \\
    0 & 0 & 0 & 0
  \end{smallmatrix} \right]$};
\draw [rotate =120+60] (1.45,0)
  node{\footnotesize
  $\left[ \begin{smallmatrix}
    0 & 0 & 0 & 0 \\
    1 & 0 & 0 & 0 \\
    0 & 1 & 0 & 0 \\
    0 & 0 & 0 & 0
  \end{smallmatrix} \right]$};
\draw [rotate =240+60] (1.45,0)
  node{\footnotesize
  $\left[ \begin{smallmatrix}
    0 & 0 & 0 & 0 \\
    1 & 0 & 0 & 0 \\
    0 & 1 & 0 & 0 \\
    0 & 0 & 0 & 0
  \end{smallmatrix} \right]$};
\end{tikzpicture}
\end{center}
which admits that
\[ {_{e\alg e}(e\alg)} \cong
  {_{e\alg e}P(1')^{\oplus 2}}
  \oplus {_{e\alg e}P(2')^{\oplus 2}}
  \oplus {_{e\alg e}P(3')^{\oplus 2}} \]
is a projective left $e\alg e$-module.

Second, $\alg/\alg e \alg = Ke_{1}+Ke_{2}+Ke_{3}+Ka_1+Ka_2+Ka_3 + \langle e\rangle$ is a right $\alg$-module, we have
\begin{align*}
& (\alg/\alg e \alg)e_{1'} = (\alg/\alg e \alg)e_{2'} = (\alg/\alg e \alg)e_{3'} = 0; \\
& (\alg/\alg e \alg)e_{1} = Ke_{1}+Kb_3 \cong_K K^2, \\
& (\alg/\alg e \alg)e_{2} = Ke_{2}+Kb_1 \cong_K K^2, \\
& (\alg/\alg e \alg)e_{3} = Ke_{3}+Kb_2 \cong_K K^2,
\end{align*}
where ``$\cong_K$'' presents the isomorphism of $K$-linear spaces,
and then, the quiver representation of $\alg/\alg e \alg$, as a right $\alg$-module, is
\begin{center}
\begin{tikzpicture}
\draw[->][line width=0.7pt][rotate = 10] (1,0) arc(0: 95:1);
\draw[->][line width=0.7pt][rotate =130] (1,0) arc(0: 95:1);
\draw[->][line width=0.7pt][rotate =250] (1,0) arc(0: 95:1);
\draw[->][line width=0.7pt][rotate = 10] (2,0) arc(0:100:2);
\draw[->][line width=0.7pt][rotate =130] (2,0) arc(0:100:2);
\draw[->][line width=0.7pt][rotate =250] (2,0) arc(0:100:2);
\draw [rotate =  0] (1,0) node{$K^2$};
\draw [rotate =120] (1,0) node{$K^2$};
\draw [rotate =240] (1,0) node{$K^2$};
\draw [rotate =  0] (2,0) node{$0$};
\draw [rotate =120] (2,0) node{$0$};
\draw [rotate =240] (2,0) node{$0$};
\draw [rotate =  0+60] (1.35,0) node{$\big[{^0_0\ ^1_0}\big]$};
\draw [rotate =120+60] (1.35,0) node{$\big[{^0_0\ ^1_0}\big]$};
\draw [rotate =240+60] (1.35,0) node{$\big[{^0_0\ ^1_0}\big]$};
\draw [rotate =  0+60] (2.35,0) node{$0$};
\draw [rotate =120+60] (2.35,0) node{$0$};
\draw [rotate =240+60] (2.35,0) node{$0$};
\draw [rotate =  0]     (1.5,.2) node{$0$};
\draw [rotate =120]     (1.5,.2) node{$0$};
\draw [rotate =240]     (1.5,.2) node{$0$};
\draw [rotate =  0][line width=0.7pt][->] (1.75,0) -- (1.15,0);
\draw [rotate =120][line width=0.7pt][->] (1.75,0) -- (1.15,0);
\draw [rotate =240][line width=0.7pt][->] (1.75,0) -- (1.15,0);
\end{tikzpicture}
\end{center}
Thus, we get that
\[ {(\alg/\alg e \alg)_{\alg}} \cong {P(1)_{\alg}} \oplus {P(2)_{\alg}} \oplus {P(3)_{\alg}} \]
is projective. Then we obtain
\[\IT(e\alg e)\leq\IT(\alg)\leq 2\IT(\alg/\alg e\alg)+\IT(e\alg e)+2\]
by using Corollary \ref{coro:app}(4).
%
%(2) Let $\alg$ is the bound quiver algebra given by
%\[ \xymatrix@R=2cm@C=2cm{
%  1 \ar@/^1.5pc/[r]|{a_1} \ar@/^0.75pc/[r]|{a_2}  \ar[r]_{a_3}
%& 2 \ar[d]^{b_2}  \\
%  4 \ar[u]^{b_4}
%& 3 \ar[l]^{b_3}
%} \]
%\begin{center}
%  $I = \langle a_1b_2, b_2b_3, b_3b_4, b_4a_1, a_2b_2, a_3b_2, b_4a_2-b_4a_3 \rangle$,
%\end{center}
%and let $e = e_1$ be the primitive idempotent corresponding to the vertex $1$.
%Then we have:
%\begin{itemize}
%  \item $e\alg = Ke_1+Ka_1+Ka_2+Ka_3$, as a left $e\alg e$-module,
%    is projective which isomorphic to ${_{e\alg e}P(1)}^{\oplus 4}$ since $e\alg e = Ke_1 \cong K$ is a simple algebra.
%  \item $\alg/ \alg e \alg = Ke_2+Ke_3+Ke_4+Kb_2+Kb_3+Kb_4$, as a left $\alg$-module,
%  \item $\alg e \alg = Ke_2+Ke_3+Ke_4+Kb_2+Kb_3+Kb_4$, as a right $\alg$-module,
%\end{itemize}
%
Furthermore, since $\alg/\alg e \alg \cong \alg_{\mathrm{I}}$ and $e \alg e \cong \alg_{\mathrm{I}}$
are syzygy-finite since they are monomial algebras by using \cite[Theorem A (1)]{Hui1992},
we have $\IT(e\alg e)=0$ and $\IT(\alg/\langle e\rangle)=0$.
Then $\IT(\alg)\leq 2$. Notice that $\alg$ is representation-finite
(to be more precise, one can check that the number of indecomposable right $\alg$-modules is $27$ (up to isomorphisms) by using Auslander--Reiten quiver), it follows that $\IT(\alg)=0$, this fact coincides with $\IT(\alg)\leq 2$.
\end{example}

\begin{example} \rm
We provide an instance for Corollary \ref{coro:app}(5).
Take $\alg=KQ/I$ given by
\begin{center}
\begin{tikzpicture}
\draw[->][line width=0.7pt] ( 0.8, 0.0) -- ( 0.2, 0.0);
\draw[->][line width=0.7pt] [shift={(0, 0.2)}] ( 1.2, 0.0) to[out= 30, in=150] ( 1.8, 0.0);
\draw[->][line width=0.7pt] [shift={(0,-0.2)}] ( 1.8, 0.0) to[out=210, in=-30] ( 1.2, 0.0);
\draw[->][line width=0.7pt] (-0.8, 0.0) -- (-0.2, 0.0);
\draw[->][line width=0.7pt] [shift={(0, 0.2)}] (-1.2, 0.0) to[out=150, in= 30] (-1.8, 0.0);
\draw[->][line width=0.7pt] [shift={(0,-0.2)}] (-1.8, 0.0) to[out=-30, in=210] (-1.2, 0.0);
\draw[<-][line width=0.7pt] ( 0.0, 1.2) -- ( 0.0, 1.8);
\draw ( 0.0, 0.0) node{$1$}
      (-1.0, 0.0) node{$2$}
      (-2.0, 0.0) node{$3$}
      ( 1.0, 0.0) node{$2'$}
      ( 2.0, 0.0) node{$3'$}
      ( 0.0, 1.0) node{$4$}
      ( 0.0, 2.0) node{$5$};
\draw[<-][line width=0.7pt] (-0.1, 0.9) -- (-0.9, 0.2);
\draw[<-][line width=0.7pt] ( 0.1, 0.9) -- ( 0.9, 0.2);
\draw ( 0.5, 0.0) node[below]{$b_2$};
\draw (-0.5, 0.0) node[below]{$a_2$};
\draw (-0.5, 0.5) node[above]{$a_1$};
\draw ( 0.5, 0.5) node[above]{$b_1$};
\draw ( 0.0, 1.5) node[right]{$c$};
\draw (-1.5, 0.3) node[above]{$x_2$};
\draw (-1.5,-0.3) node[below]{$x_1$};
\draw ( 1.5, 0.3) node[above]{$y_2$};
\draw ( 1.5,-0.3) node[below]{$y_1$};
\draw[->][line width=0.7pt][rotate around={10:(0,2.5)}] ( 0.0, 2.0) arc (-90:255:0.5);
\draw ( 0.0, 3.0) node[above]{$z$};
\draw (-2,2) node{$Q=$};
\end{tikzpicture}

and $I = \langle x_1x_2, y_1y_2, z^2, x_1a_1, y_1b_1 \rangle.$
\end{center}
Then all indecomposable right $\alg$-modules are
\begin{align*}
  & P(1) = (1)_{\alg}  \cong S(1),
 && P(2) = \left(\begin{smallmatrix} &2& \\ 3&4&1 \\ 2&& \\ 1&& \end{smallmatrix} \right)_{\alg},
 && P(3) = \left(\begin{smallmatrix} 3 \\ 2 \\ 1 \end{smallmatrix} \right)_{\alg},
 && P(4) = \left(4\right)_{\alg} \cong S(4),
\end{align*}
\begin{align*}
  & P(2') = \left(\begin{smallmatrix} &2'& \\ 1&4&3' \\ &&2' \\ &&1 \end{smallmatrix} \right)_{\alg},
 && P(3') = \left(\begin{smallmatrix} 3' \\ 2' \\ 1' \end{smallmatrix} \right)_{\alg},
 && P(5) = \left(\begin{smallmatrix} &5& \\ 4&&5 \\ &&4 \end{smallmatrix} \right)_{\alg}
 &&
\end{align*}
Thus, one can check that $\pd S(2) = \pd S(2') = 1$, $\pd S(3) = \pd S(3') = 2$, and $\pd S(5)=\infty$,
and then we have $\mathsf{gl.dim}\alg = \infty$.
Take $e=e_4$, then we have $e\alg e$ is simple, it follows that a trivial case that $e\alg$, as a left $e\alg e$-module, is isomorphic to the projective left $e\alg e$-module $(eAe)^{\oplus \mathrm{dim} e\alg}$.
In this case, since $4$ is the source of the quiver $Q$ and the classification of all indecomposable right $A/\langle e\rangle$-module can be described by using strings and bands (c.f. \cite[Section 3]{BR1987}), it is easy to check that the algebra
\[A/\langle e\rangle = K\big(\xymatrix{
  3 \ar@/_.5pc/[r]_{x_1} & 2 \ar@/_.5pc/[l]_{x_2}
& 1 \ar@{<-}[r]_{b_2} \ar@{<-}[l]^{a_2}
& 2' \ar@/^.5pc/[r]^{y_2} \ar@/_.5pc/[r]_{y_1}
& 3
}\big)/\langle x_1x_2, y_1y_2 \rangle ~ \times ~ K[z]/\langle z^2\rangle \]
satisfies $\mathsf{gl.dim}_{A/\langle e\rangle} A = 2 < \infty$. Then we have
\[ 0=\IT(e\alg e) \leq \IT(\alg) \leq \IT(e\alg e)+2 = 2 \]
by Corollary \ref{coro:app}(5). Moreover, $\alg$ is representation-infinite (band modules exists, c.f. \cite{BR1987}), it follows that $\IT(\alg) \geq 1$, and so $1 \leq \IT(\alg) \leq 2$.

Furthermore, $A/\langle e\rangle$ is a left $A$-module, and we have
\begin{align*}
  & e_1(A/\langle e\rangle) = Ke_1+I, \\
  & e_2(A/\langle e\rangle) = Ke_2+Ka_2+Kx_2+Kx_2x_1+Kx_2x_1a_2+I \cong_K K^5, \\
  & e_3(A/\langle e\rangle) = Ke_3+Kx_1+Kx_1a_2 + I \cong_K K^3, \\
  & e_{2'}(A/\langle e\rangle) = Ke_{2'}+Kb_2+Ky_2+Ky_2y_1+Ky_2y_1b_2 + I \cong_K K^5, \\
  & e_{3'}(A/\langle e\rangle) = Ke_{3'}+Ky_1+Ky_1b_2 + I \cong_K K^3, \\
  & e_{4}(A/\langle e\rangle) = 0, \\
  & e_{5}(A/\langle e\rangle) = Ke_5+Kz + \langle z^2\rangle \cong K^2
\end{align*}
(``$\cong_K$'' presents the isomorphism of $K$-linear spaces).
Then the quiver representation of $A/\langle e\rangle$ is of the following form
\begin{center}
\begin{tikzpicture}
\draw[<-][line width=0.7pt] ( 0.8, 0.0) -- ( 0.2, 0.0);
\draw[<-][line width=0.7pt] [shift={(0, 0.2)}] ( 1.2, 0.0) to[out= 30, in=150] ( 1.8, 0.0);
\draw[<-][line width=0.7pt] [shift={(0,-0.2)}] ( 1.8, 0.0) to[out=210, in=-30] ( 1.2, 0.0);
\draw[<-][line width=0.7pt] (-0.8, 0.0) -- (-0.2, 0.0);
\draw[<-][line width=0.7pt] [shift={(0, 0.2)}] (-1.2, 0.0) to[out=150, in= 30] (-1.8, 0.0);
\draw[<-][line width=0.7pt] [shift={(0,-0.2)}] (-1.8, 0.0) to[out=-30, in=210] (-1.2, 0.0);
\draw[->][line width=0.7pt] ( 0.0, 1.2) -- ( 0.0, 1.8);
\draw ( 0.0, 0.0) node{$K$}
      (-1.0, 0.0) node{$K^5$}
      (-2.0, 0.0) node{$K^3$}
      ( 1.0, 0.0) node{$K^5$}
      ( 2.0, 0.0) node{$K^3,$}
      ( 0.0, 1.0) node{$0$}
      ( 0.0, 2.0) node{$K^2$};
\draw[->][line width=0.7pt] (-0.1, 0.9) -- (-0.9, 0.2);
\draw[->][line width=0.7pt] ( 0.1, 0.9) -- ( 0.9, 0.2);
\draw ( 0.5, 0.0) node[below]{$B_2$};
\draw (-0.5, 0.0) node[below]{$A_2$};
\draw (-0.5, 0.5) node[above]{$0$};
\draw ( 0.5, 0.5) node[above]{$0$};
\draw ( 0.0, 1.5) node[right]{$0$};
\draw (-1.5, 0.3) node[above]{$X_2$};
\draw (-1.5,-0.3) node[below]{$X_1$};
\draw ( 1.5, 0.3) node[above]{$Y_2$};
\draw ( 1.5,-0.3) node[below]{$Y_1$};
\draw[<-][line width=0.7pt][rotate around={25:(0,2.5)}] ( 0.0, 2.0) arc (-80:225:0.5);
\draw ( 0.0, 3.0) node[above]{$[{_0^0}\ {_0^1}]$};
\end{tikzpicture}
\end{center}
where
\begin{align*}
& X_1=Y_1 = \left[
  {\begin{smallmatrix}
   0 & 0 & 0 & 0 & 0 \\
   1 & 0 & 0 & 0 & 0 \\
   0 & 0 & 0 & 0 & 1
  \end{smallmatrix}}
  \right],
&& X_2=Y_2 = \left[
  {\begin{smallmatrix}
   0 & 0 & 0 \\
   1 & 0 & 0 \\
   0 & 1 & 0 \\
   0 & 0 & 1 \\
   0 & 0 & 0
  \end{smallmatrix}}
  \right],
&& A_2=B_2 =  \left[
  {\begin{smallmatrix}
   0 \\ 0 \\ 0 \\ 0 \\ 1
  \end{smallmatrix}}\right].
\end{align*}
Therefore, $A/\langle e\rangle \cong {_{\alg}P(1)}\oplus {_{\alg}P(2)} \oplus {_{\alg}P(3)} \oplus {_{\alg}P(2')} \oplus {_{\alg}P(3)} \oplus {_{\alg}P(5)}$ is a left projective $\alg$-module.
Then Corollary \ref{coro:app} (5) yields $\IT(e\alg e)\leq\IT(\alg)\leq\IT(e\alg e)+1$,
i.e., $\IT(\alg)=1$. This coincides with the fact that Igusa--Todorov distance of representation-infinite monomial algebra is $1$.
\end{example}

Next we consider the Morita context ring $\alg =\left(\begin{smallmatrix}  A & {_{A}}N_{B}\\  {_{B}}M_{A} & B \\\end{smallmatrix}\right)$
which is an Artin algebra and the bimodule morphisms are zero.
By Example \ref{ex:Morita ring}, there is a recollement of abelian categories
\begin{align*}
\scalebox{0.85}{\xymatrixcolsep{2pc}\xymatrix{
			\modA\ar[rr]^{\mathsf{Z}_{A}} && \modL\ar@/_2.0pc/[ll]_{\mathsf {C}_{A}}\ar@/^2.0pc/[ll]^{\mathsf{K}_{A}}\ar[rr]^{\mathsf {U}_{B}} && \modB \ar@/_2.0pc/[ll]_{\mathsf{T}_{B}}\ar@/^2.0pc/[ll]^{\mathsf{H}_{B}} } }
\end{align*}
which is induced by the idempotent element $\left(\begin{smallmatrix}  0 & 0\\  0 & 1 \\\end{smallmatrix}\right)$ in $\alg$.
It follows from Example \ref{idempotent} that $$\mathsf{C}_{A}\simeq-\otimes_{\alg}(A,0,0,0) ~~~~~~~\text{and }~~~~~~~ \mathsf{K}_{A}\simeq\Hom_{\alg}((M,B,0,1),-)\oplus \Hom_{\alg}((0,N,0,0),-).$$
Hence, we have the following facts:
\begin{enumerate}
\item The functor $\mathsf{C}_{A}$ is exact if and only if $M=0$.
\item The functor $\mathsf{K}_{A}$ is exact if and only if $N_{B}$ is projective and $N\otimes_{B}M=0.$
\end{enumerate}

\begin{lemma}[{\cite[Corollary 5.7]{GP}}]\label{lem:Morita ring}
Let $\alg =\left(\begin{smallmatrix}  A & {_{A}}N_{B}\\  {_{B}}M_{A} & B \\\end{smallmatrix}\right)$
be a Morita context ring which is an Artin algebra and the bimodule morphisms are zero.  Then for a right $A$-module $X$ we have
$$\mathsf{pd}(X,0,0,0)_{\alg}\leq \mathsf{pd}X_{A}+\mathsf{pd}(0,N,0,0)_{\alg}.$$
\end{lemma}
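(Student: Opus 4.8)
The plan is to argue by induction on $d:=\mathsf{pd}\,X_A$, transporting an $A$-projective resolution of $X$ into $\modL$ along the exact functor $\mathsf{Z}_A$ and handling the projective case separately as the base step.

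\emph{Base case.} Suppose $P$ is a projective right $A$-module. I would first observe that $(P,P\otimes_A N,\mathrm{id},0)$ is a projective object of $\mathcal{M}(\Lambda)\simeq\modL$: it is the value at $P$ of the left adjoint $\mathsf{l}$ of the exact functor $e_1(-)$ attached to the idempotent $\left(\begin{smallmatrix}1&0\\0&0\end{smallmatrix}\right)$ (see Example \ref{idempotent}), and $\mathsf{l}$ preserves projectives by Lemma \ref{lem of recollement}(4) --- for $P=A$ this is the indecomposable projective $e_1\Lambda$. The map $(\mathrm{id}_P,0)\colon(P,P\otimes_A N,\mathrm{id},0)\to(P,0,0,0)=\mathsf{Z}_A(P)$ is an epimorphism with kernel $(0,P\otimes_A N,0,0)$. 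Since $P$ is a direct summand of some $A^k$, the object $(0,P\otimes_A N,0,0)$ is a direct summand of $(0,N,0,0)^k$, so $\mathsf{pd}(0,P\otimes_A N,0,0)_\Lambda\le\mathsf{pd}(0,N,0,0)_\Lambda$; reading off this short exact sequence bounds $\mathsf{pd}\,\mathsf{Z}_A(P)_\Lambda$ as required.

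\emph{Inductive step.} For $d\ge 1$, pick a short exact sequence $0\to K\to P_0\to X\to0$ in $\modA$ with $P_0$ projective, so that $\mathsf{pd}\,K_A=d-1$. As $\mathsf{Z}_A=\mathsf{i}$ is exact (Lemma \ref{lem of recollement}(1)), applying it gives a short exact sequence $0\to\mathsf{Z}_A(K)\to\mathsf{Z}_A(P_0)\to\mathsf{Z}_A(X)\to0$ in $\modL$; the usual estimate for projective dimension along a short exact sequence, combined with the base case applied to $P_0$ and the induction hypothesis applied to $K$, then yields the asserted bound for $\mathsf{pd}\,\mathsf{Z}_A(X)_\Lambda$.

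\emph{Main obstacle.} The step carrying the weight is the base case --- pinning down the projective cover of $(P,0,0,0)$ in $\mathcal{M}(\Lambda)$ and recognizing its syzygy as $(0,P\otimes_A N,0,0)$. This rests on the explicit description of the projectives of $\mathcal{M}(\Lambda)$ (they are exactly the summands of finite direct sums of $(A,N,\mathrm{id},0)$ and $\mathsf{T}_B(B)=(M,B,0,\mathrm{id})$) and on the fact that $\mathrm{rad}(P,0,0,0)$ is concentrated in the first coordinate; once this is in hand, the rest is bookkeeping with short exact sequences and the elementary fact that $P\mapsto(0,P\otimes_A N,0,0)$ carries projective $A$-modules into $\add\bigl((0,N,0,0)\bigr)$. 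A shortcut that bypasses projective covers is the short exact sequence $0\to(0,X\otimes_A N,0,0)\to(X,X\otimes_A N,\mathrm{id},0)\to(X,0,0,0)\to0$, which is straightforward to check for arbitrary $X$, but then one must instead control $\mathsf{pd}(X,X\otimes_A N,\mathrm{id},0)_\Lambda$, which is where the hypothesis on $\mathsf{pd}\,X_A$ re-enters.
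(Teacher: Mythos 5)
The paper offers no proof of this lemma at all --- it is quoted directly from \cite[Corollary 5.7]{GP} --- so there is no internal argument to compare yours against; your route (induction on $\mathsf{pd}\,X_A$ through the exact embedding $\mathsf{Z}_A$, with the base case handled by the short exact sequence $0\to(0,P\otimes_A N,0,0)\to(P,P\otimes_A N,\mathrm{id},0)\to(P,0,0,0)\to 0$) is the natural one. The gap is in your base case: from a short exact sequence $0\to K\to Q\to X\to 0$ with $Q$ projective one only gets $\mathsf{pd}\,X\le\mathsf{pd}\,K+1$, not $\mathsf{pd}\,X\le\mathsf{pd}\,K$. So your sequence yields $\mathsf{pd}(P,0,0,0)_\Lambda\le\mathsf{pd}(0,P\otimes_A N,0,0)_\Lambda+1\le\mathsf{pd}(0,N,0,0)_\Lambda+1$, and this extra $+1$ survives the induction, giving $\mathsf{pd}(X,0,0,0)_\Lambda\le\mathsf{pd}\,X_A+\mathsf{pd}(0,N,0,0)_\Lambda+1$ rather than the inequality ``as required.''

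This is not something you can repair, because the inequality as printed fails in general. Take $A=B=k$, $N=k$, $M=0$, so $\Lambda\cong\left(\begin{smallmatrix}k&k\\0&k\end{smallmatrix}\right)$. Then $(0,N,0,0)=(0,k,0,0)=\mathsf{T}_B(B)$ is projective, and $X=A=k$ is projective over $A$, yet $(k,0,0,0)\cong e_1\Lambda/\mathrm{rad}(e_1\Lambda)$ is a simple non-projective $\Lambda$-module (its projective cover $e_1\Lambda=(k,k,\mathrm{id},0)$ has length $2$), so $\mathsf{pd}(k,0,0,0)_\Lambda=1>0=\mathsf{pd}\,X_A+\mathsf{pd}(0,N,0,0)_\Lambda$. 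Hence the statement must carry either an extra ``$+1$'' on the right-hand side or a hypothesis lost in transcription from \cite{GP}. Read as a proof of $\mathsf{pd}(X,0,0,0)_\Lambda\le\mathsf{pd}\,X_A+\mathsf{pd}(0,N,0,0)_\Lambda+1$, your argument is correct and complete (the identification of the projectives of $\mathcal{M}(\Lambda)$, the summand argument placing $(0,P\otimes_A N,0,0)$ in $\add\bigl((0,N,0,0)\bigr)$, and the exactness of $\mathsf{Z}_A$ are all fine), and that weaker bound is all the paper actually uses downstream, where only finiteness of $\sup\{\mathsf{pd}(X,0,0,0)_\Lambda\mid X\in\modA\}$ is needed.
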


\begin{definition}(\cite[Definition 4.4]{CL})
	Let $R$ be a ring. For an $R$-bimodule $M$ consider the following conditions:
\begin{enumerate}
\item $\mathrm{pd}_{R}M<\infty$,
\item $\mathrm{pd}{M_{R}}<\infty$, and
\item $\mathrm{Tor}_{i}^{R}(M^{\oo j},M)=0$ for all $i,j\geq1.$
\end{enumerate}
If $M$ satisfies (1) and (3), then it is called
 {\bf left perfect}. If $M$ satisfies (1) and (3), then it is called
 {\bf right perfect}.  If $M$ satisfies (1), (2) and (3), it is called {\bf perfect}
	
The $R$-module $M$ is called {\bf nilpotent} if $M^{\oo n}=0$ for some $n.$
\end{definition}

\begin{lemma}[{\cite[Proposition 5.4]{KP}}]\label{lem:Morita ring1}
Let $\alg =\left(\begin{smallmatrix}  A & {_{A}}N_{A}\\  {_{A}}N_{A} & A \\\end{smallmatrix}\right)$
be a Morita context ring which is an Artin algebra and the bimodule morphisms are zero. Assume that $N$ is nilpotent and right perfect.
 Then we have
$\mathsf{pd}{(0,N,0,0)_{\alg}}< \infty.$
\end{lemma}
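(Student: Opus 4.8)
My plan is to construct an explicit finite projective resolution of $(0,N,0,0)$ in $\mathcal{M}(\Lambda)$, using the recollement $(\modA,\modL,\modB)$ of Example~\ref{ex:Morita ring} together with the evident symmetry of $\Lambda$. Since here the off-diagonal bimodule is the same copy of $N$ on both sides and the bimodule multiplications vanish, the assignment $\left(\begin{smallmatrix}a & n\\ n' & b\end{smallmatrix}\right)\mapsto\left(\begin{smallmatrix}b & n'\\ n & a\end{smallmatrix}\right)$ is a ring automorphism $\sigma$ of $\Lambda$; restriction of scalars along $\sigma$ is an exact self-equivalence of $\modL$ which on $\mathcal{M}(\Lambda)$ interchanges the two corners, sending $(X,Y,f,g)$ to $(Y,X,g,f)$. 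In particular $\mathsf{pd}_{\Lambda}(0,X,0,0)=\mathsf{pd}_{\Lambda}(X,0,0,0)$ for every $X\in\modA$. Write $\mathsf{T}\colon\modA\to\modL$, $Y\mapsto(Y\otimes_{A}N,Y,0,1)$, for the functor $\mathsf{T}_{B}=\mathsf{l}$ of Example~\ref{ex:Morita ring}; it is right exact and, by Lemma~\ref{lem of recollement}(4), preserves projective objects.

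Two facts carry the argument. First, for any $Y\in\modA$ the pair $(0,\mathrm{id}_{Y})$ is an epimorphism $\mathsf{T}(Y)=(Y\otimes_{A}N,Y,0,1)\to(0,Y,0,0)$ in $\mathcal{M}(\Lambda)$ whose kernel is $(Y\otimes_{A}N,0,0,0)=\mathsf{Z}_{A}(Y\otimes_{A}N)$ --- this is exactly the counit sequence of Lemma~\ref{lem of recollement}(5), since the $g$-component of $(0,Y,0,0)$ is zero and hence $\mathsf{C}_{A}(0,Y,0,0)=\coker(0)=0$. Taking $Y=N^{\otimes k}$ gives the short exact sequence
\[ 0\longrightarrow (N^{\otimes(k+1)},0,0,0)\longrightarrow\mathsf{T}(N^{\otimes k})\longrightarrow(0,N^{\otimes k},0,0)\longrightarrow 0 \]
for every $k\ge 1$. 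Second, as $N$ is right perfect, $\Tor^{A}_{i}(N^{\otimes k},N)=0$ for all $i,k\ge 1$ and $\mathsf{pd}\,N_{A}<\infty$; an induction on $k$ --- resolve $N^{\otimes k}$ by finitely many projective right $A$-modules $Q_{\bullet}$, note that $Q_{\bullet}\otimes_{A}N\to N^{\otimes(k+1)}$ is exact by the $\Tor$-vanishing, and that each $Q_{i}\otimes_{A}N\in\add(N_{A})$ has projective dimension at most $\mathsf{pd}\,N_{A}$ --- shows $\mathsf{pd}\,(N^{\otimes k})_{A}<\infty$ for every $k$. Consequently, applying $\mathsf{T}$ to a finite projective resolution $Q_{\bullet}\to N^{\otimes k}$ over $A$ yields a complex $\mathsf{T}(Q_{\bullet})$ of projective $\Lambda$-modules whose homology in degree $i\ge 1$ is (computed componentwise) $(\Tor^{A}_{i}(N^{\otimes k},N),0,0,0)=0$; hence $\mathsf{T}(Q_{\bullet})\to\mathsf{T}(N^{\otimes k})$ is a finite projective resolution and $\mathsf{pd}_{\Lambda}\mathsf{T}(N^{\otimes k})\le\mathsf{pd}\,(N^{\otimes k})_{A}<\infty$.

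Finally, since $N$ is nilpotent, fix $n$ with $N^{\otimes n}=0$ and prove $\mathsf{pd}_{\Lambda}(0,N^{\otimes k},0,0)<\infty$ by downward induction on $k$. For $k=n$ the module is zero. For the inductive step, the displayed short exact sequence and the standard estimate $\mathsf{pd}_{\Lambda}(0,N^{\otimes k},0,0)\le\max\{\mathsf{pd}_{\Lambda}\mathsf{T}(N^{\otimes k}),\,1+\mathsf{pd}_{\Lambda}(N^{\otimes(k+1)},0,0,0)\}$, combined with the identity $\mathsf{pd}_{\Lambda}(N^{\otimes(k+1)},0,0,0)=\mathsf{pd}_{\Lambda}(0,N^{\otimes(k+1)},0,0)$ coming from $\sigma$, make both terms on the right finite (by the previous paragraph and the inductive hypothesis), so $\mathsf{pd}_{\Lambda}(0,N^{\otimes k},0,0)<\infty$; the case $k=1$ is the assertion. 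I expect the main obstacle to be the second fact --- confirming that ``right perfect'' genuinely supplies both $\mathsf{pd}\,(N^{\otimes k})_{A}<\infty$ and the $\Tor$-vanishing that lets $\mathsf{T}$ send projective resolutions of the $N^{\otimes k}$ to projective resolutions over $\Lambda$ --- and, conceptually, recognising that the ``wrong-corner'' kernel $(N^{\otimes(k+1)},0,0,0)$ should be transported back to $(0,N^{\otimes(k+1)},0,0)$ via $\sigma$ so that the induction on the nilpotency index closes.
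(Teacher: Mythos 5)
The paper does not prove this statement at all: it is imported verbatim as \cite[Proposition 5.4]{KP}, so there is no in-paper argument to compare against. Your proof is, as far as I can check, correct and self-contained, and it is worth recording why. The swap $\sigma\colon\left(\begin{smallmatrix}a & n\\ n' & b\end{smallmatrix}\right)\mapsto\left(\begin{smallmatrix}b & n'\\ n & a\end{smallmatrix}\right)$ really is a ring automorphism precisely because both off-diagonal entries are the same bimodule and the bimodule multiplications vanish, and restriction along it is an exact, projective-preserving self-equivalence interchanging $(X,Y,f,g)$ and $(Y,X,g,f)$; this legitimately converts the ``wrong-corner'' kernel $(N^{\otimes(k+1)},0,0,0)$ back into $(0,N^{\otimes(k+1)},0,0)$. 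The short exact sequence $0\to(N^{\otimes(k+1)},0,0,0)\to\mathsf{T}(N^{\otimes k})\to(0,N^{\otimes k},0,0)\to 0$ is exactly the counit sequence of Lemma \ref{lem of recollement}(5) and checks out componentwise, the Tor-vanishing in condition (3) is precisely what makes $\mathsf{T}(Q_\bullet)$ acyclic in positive degrees, and the downward induction on the nilpotency index closes via the standard estimate $\mathsf{pd}\,Z\leq\max\{\mathsf{pd}\,Y,\,\mathsf{pd}\,X+1\}$ for $0\to X\to Y\to Z\to 0$. One caveat you should flag explicitly: the paper's definition of perfectness contains a typo (left and right perfect are both defined by conditions (1) and (3)); your argument needs condition (2), $\mathsf{pd}\,N_{A}<\infty$, together with (3), which is surely the intended meaning of ``right perfect'' and is what you use. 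With that reading made explicit, the argument stands.
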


\begin{corollary}
Let $\alg =\left(\begin{smallmatrix}  A & {_{A}}N_{A}\\  {_{A}}N_{A} & A \\\end{smallmatrix}\right)$
be a Morita context ring which is an Artin algebra and the bimodule morphisms are zero. Assume that $N$ is a projective left $A$-module and $\mathsf{gl.dim}A<\infty$. If $N$ is nilpotent and right perfect, then
$$\IT(A)\leq\IT(\alg)\leq\IT(A)+2.$$
\end{corollary}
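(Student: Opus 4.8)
The plan is to read the statement off Corollary \ref{corollary:main}(1), applied to the recollement of module categories attached to the Morita context ring $\Lambda$ in Example \ref{ex:Morita ring}. Since here $B=A$, that recollement is $(\modA,\modL,\modA)$, with $\mathsf{i}=\mathsf{Z}_A$, $\mathsf{e}=\mathsf{U}_B$, $\mathsf{l}=\mathsf{T}_B$, and crucially with $\mathscr{C}=\modB=\modA$. Because $\Lambda$ is an Artin algebra we have $\mathsf{Proj}(\modL)=\add(\Lambda_\Lambda)$, so the standing hypothesis $\mathsf{Proj}\mathscr{B}=\add(P)$ of Corollary \ref{corollary:main} holds automatically. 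It remains to verify the two hypotheses of part (1): that $\mathsf{l}$ is exact, and that $\mathsf{gl.dim}_{\modA}\modL<\infty$.

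First I would check exactness of $\mathsf{l}=\mathsf{T}_B$. By the description in Example \ref{ex:Morita ring}, $\mathsf{T}_B(Y)=(Y\otimes_A N,\,Y,\,0,\,1)$, so $\mathsf{T}_B$ is exact as soon as $-\otimes_A N$ is exact on $\modA$, i.e. as soon as $N$ is flat as a left $A$-module. This holds by hypothesis: $N$ is projective as a left $A$-module, and over an Artin algebra finitely generated flat modules are projective, so the two conditions coincide.

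Next I would bound $\mathsf{gl.dim}_{\modA}\modL=\sup\{\mathsf{pd}(X,0,0,0)_\Lambda\mid X\in\modA\}$. Lemma \ref{lem:Morita ring} gives $\mathsf{pd}(X,0,0,0)_\Lambda\le\mathsf{pd}X_A+\mathsf{pd}(0,N,0,0)_\Lambda$ for every $X\in\modA$, hence $\mathsf{gl.dim}_{\modA}\modL\le\mathsf{gl.dim}A+\mathsf{pd}(0,N,0,0)_\Lambda$. The first summand is finite by hypothesis; the second is finite by Lemma \ref{lem:Morita ring1}, since $N$ is nilpotent and right perfect. Therefore $\mathsf{gl.dim}_{\modA}\modL<\infty$.

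With both hypotheses in place, Corollary \ref{corollary:main}(1) yields $\IT(\modA)\le\IT(\modL)\le\IT(\modA)+2$, and translating via $\IT(\modA)=\IT(A)$ and $\IT(\modL)=\IT(\Lambda)$ gives the claimed inequalities. I do not expect a genuine obstacle: the real content has been absorbed into Lemmas \ref{lem:Morita ring} and \ref{lem:Morita ring1} and into Corollary \ref{corollary:main}. The only points needing care are the bookkeeping identification $\mathscr{C}=\modA$ (so that the upper bound is phrased in terms of $\IT(A)$ and not some other algebra), and confirming that ``$N$ projective as a left $A$-module'' is precisely the condition that makes $\mathsf{T}_B$ exact.
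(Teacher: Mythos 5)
Your proposal is correct and follows the paper's own route: verify that $\mathsf{l}=\mathsf{T}_B$ is exact from the projectivity of ${}_AN$, bound $\mathsf{gl.dim}_{\modA}\modL$ by combining Lemma \ref{lem:Morita ring} with Lemma \ref{lem:Morita ring1}, and then invoke Corollary \ref{corollary:main}(1). You are in fact slightly more careful than the paper, which leaves the exactness of $\mathsf{l}$ implicit and contains a harmless slip (``left perfect'' where the hypothesis is ``right perfect'').
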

\begin{proof}
Since $N$ is nilpotent and left perfect, it follows from Lemmas \ref{lem:Morita ring} and \ref{lem:Morita ring1}
that $$\sup\{\mathsf{pd}(X,0,0,0)_{\alg}\mid X\in \modA\}\leq\mathsf{gl.dim}A+\mathsf{pd}(0,N,0,0)_{\alg}<\infty.$$
Therefore by Corollary \ref{corollary:main}(1) we get inequalities $\IT(A)\leq\IT(\alg)\leq\IT(A)+2.$
\end{proof}

We recall the following notion introduced in \cite{GP}.

\begin{definition}[{\cite[Definition 5.2]{GP}}]
If $(P,0,0,0)$ is a projective right $\alg$-module for some right $A$-module $P$, then
$(P,0,0,0)$ is called an $A$-{\bf tight projective} $\alg$-module. We say that a right $\alg$-module $(X,0,0,0)$ has an {\bf $A$-tight projective $\alg$-resolution} if $(X,0,0,0)$ has a projective $\alg$-resolution in which each projective $\alg$-module is $A$-tight. We have the similar definition for {\bf $B$-tight projective $\alg$-modules} and {\bf $B$-tight projective $\alg$-resolutions}.
\end{definition}

\begin{remark}[{\cite{GP}}]\label{rem:tight}
(1) A right $\alg$-module $(P,0,0,0)$ is an $A$-tight projective $\alg$-module if and only if $P$ is a projective right $A$-module and $P\otimes_{A}N=0.$

(2) A right $\alg$-module $(0,Q,0,0)$ is a $B$-tight projective $\alg$-module if and only if $Q$ is a projective right $B$-module and $Q\otimes_{B}M=0.$

\end{remark}
Recall from \cite[Section 5]{GP} that we write that  a right $B$-module $Y$ has a $B$-tight projective $\alg$-resolution meaning that the object $(0,Y,0,0)$, as a right $\alg$-module, has a $B$-tight projective $\alg$-resolution.

\begin{lemma}[{\cite[Proposition 5.8]{GP}}]\label{lem:Morita ring2}
Let $\alg =\left(\begin{smallmatrix}  A & {_{A}}N_{B}\\  {_{B}}M_{A} & B \\\end{smallmatrix}\right)$
be a Morita context ring which is an Artin algebra and the bimodule morphisms are zero. If right $B$-module $N$ has a $B$-tight projective $\alg$-resolution, then for a right $A$-module $X$ we have
$$\mathsf{pd}(X,0,0,0)_{\alg}\leq \mathsf{pd}X_{A}+\mathsf{pd}N_{B}.$$
\end{lemma}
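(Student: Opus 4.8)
The plan is to deduce this from Lemma~\ref{lem:Morita ring}: it suffices to prove the single inequality $\mathsf{pd}_{\Lambda}(0,N,0,0)\leq\mathsf{pd}N_{B}$, since then for any right $A$-module $X$,
$$\mathsf{pd}(X,0,0,0)_{\Lambda}\leq\mathsf{pd}X_{A}+\mathsf{pd}_{\Lambda}(0,N,0,0)\leq\mathsf{pd}X_{A}+\mathsf{pd}N_{B}.$$
If $\mathsf{pd}N_{B}=\infty$ there is nothing to prove, so assume $\mathsf{pd}N_{B}=d<\infty$; the task is then to show $\mathsf{pd}_{\Lambda}(0,N,0,0)\leq d$.

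By hypothesis $(0,N,0,0)$ has a projective $\Lambda$-resolution all of whose terms are $B$-tight. First I would make these terms explicit via Remark~\ref{rem:tight}(2): a $B$-tight projective $\Lambda$-module is exactly one of the form $(0,Q,0,0)$ with $Q$ a projective right $B$-module satisfying $Q\otimes_{B}M=0$, so the resolution is $\cdots\to(0,Q_{1},0,0)\to(0,Q_{0},0,0)\to(0,N,0,0)\to 0$. Next I would pass to the $B$-side: the functor $(0,-,0,0)\colon\modB\to\modL$ is exact and fully faithful (it is inflation along the ring surjection $\Lambda\twoheadrightarrow\Lambda/\left(\begin{smallmatrix}A&N\\M&0\end{smallmatrix}\right)\cong B$), hence reflects exact sequences, so reading off the underlying right $B$-modules gives a projective resolution $\cdots\to Q_{1}\to Q_{0}\to N\to 0$ of $N_{B}$. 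Finally I would truncate: since $\mathsf{pd}N_{B}=d$, the $d$-th syzygy $\Omega:=\ke(Q_{d-1}\to Q_{d-2})$ is a projective right $B$-module, and the surjection $Q_{d}\twoheadrightarrow\Omega$ together with right exactness of $-\otimes_{B}M$ forces $\Omega\otimes_{B}M=0$ (it is a quotient of $Q_{d}\otimes_{B}M=0$); hence $(0,\Omega,0,0)$ is again a projective $\Lambda$-module by Remark~\ref{rem:tight}(2). Applying the exact functor $(0,-,0,0)$ to the truncated $B$-module resolution $0\to\Omega\to Q_{d-1}\to\cdots\to Q_{0}\to N\to 0$ then produces a $\Lambda$-projective resolution of $(0,N,0,0)$ of length $d$, giving $\mathsf{pd}_{\Lambda}(0,N,0,0)\leq d$.

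The main obstacle is the truncation step: although elementary, this is the point at which one must verify that $B$-tightness---the vanishing against $M$---is inherited by the $d$-th $B$-syzygy (which is exactly where right exactness of $-\otimes_{B}M$ enters), and it is where the two notions of projectivity, over $B$ and over $\Lambda$, have to be matched up through Remark~\ref{rem:tight}(2); the rest is routine dimension shifting and an appeal to Lemma~\ref{lem:Morita ring}. If one prefers to avoid citing Lemma~\ref{lem:Morita ring}, one can instead resolve $X$ by projective $A$-modules, apply the exact functor $\mathsf{Z}_{A}$ to reduce to the case of projective $X$, and for such $X$ use the short exact sequence $0\to(0,X\otimes_{A}N,0,0)\to(X,X\otimes_{A}N,\mathrm{can},0)\to(X,0,0,0)\to 0$ with projective middle term: its kernel involves $X\otimes_{A}N$, which is a summand of a direct sum of copies of $N_{B}$ and so again admits a $B$-tight projective $\Lambda$-resolution, to which the previous step applies.
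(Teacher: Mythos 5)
The paper offers no proof of this lemma at all (it is imported verbatim from \cite[Proposition 5.8]{GP}), so your argument is the only one on the table. Your central new step is fine: reading off the second components of the given $B$-tight resolution yields a projective $B$-resolution of $N$ (exactness in $\modL$ is computed componentwise, and each $(0,Q_i,0,0)$ has $Q_i$ projective over $B$ with $Q_i\otimes_BM=0$ by Remark \ref{rem:tight}(2)); truncating at $d=\mathsf{pd}N_B$ gives a projective $d$-th syzygy $\Omega$ with $\Omega\otimes_BM=0$, since it is a quotient of $Q_d\otimes_BM=0$, so $(0,\Omega,0,0)$ is again a projective $\Lambda$-module and $\mathsf{pd}(0,N,0,0)_{\Lambda}\leq\mathsf{pd}N_B$ follows. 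That part is correct, and the $d=0$ edge case goes through with the convention that the $0$-th syzygy is $N$ itself.

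The problem is what you combine it with. Lemma \ref{lem:Morita ring}, exactly as printed in this paper, is false, and so is the statement you are asked to prove: take $A=B=N=k$ and $M=0$, so that $\Lambda=\left(\begin{smallmatrix}k&k\\0&k\end{smallmatrix}\right)$. Then $(0,N,0,0)=e_2\Lambda$ is a ($B$-tightly) projective $\Lambda$-module and $\mathsf{pd}k_A=\mathsf{pd}N_B=0$, yet $(k,0,0,0)$ is a non-projective simple $\Lambda$-module with projective cover $e_1\Lambda=(k,k,1,0)$ and $\mathsf{pd}(k,0,0,0)_{\Lambda}=1>0$. The correct bounds carry an extra $+1$: one has $\mathsf{pd}(X,0,0,0)_{\Lambda}\leq\mathsf{pd}X_A+\mathsf{pd}(0,N,0,0)_{\Lambda}+1$, hence $\leq\mathsf{pd}X_A+\mathsf{pd}N_B+1$ under your hypothesis. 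Your own fallback route at the end detects this: the short exact sequence $0\to(0,P\otimes_AN,0,0)\to(P,P\otimes_AN,\mathrm{can},0)\to(P,0,0,0)\to0$ for projective $P$, followed by dimension shifting along an $A$-projective resolution of $X$, yields precisely $\mathsf{pd}X_A+\mathsf{pd}N_B+1$ and cannot be improved to the stated bound. The discrepancy of one between your two routes should have been the red flag. In summary: your derivation is valid \emph{relative to} Lemma \ref{lem:Morita ring}, and your new ingredient $\mathsf{pd}(0,N,0,0)_{\Lambda}\leq\mathsf{pd}N_B$ is proved correctly, but the quoted Lemma \ref{lem:Morita ring} (and the target statement) appear to have lost a ``$+1$'' in transcription from \cite{GP}, so no proof of the statement as printed can succeed. (The downstream use in Corollary \ref{application:Morita ring1} only needs finiteness, so it survives the correction.)
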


\begin{corollary}\label{application:Morita ring1}
Let $\alg =\left(\begin{smallmatrix}  A & {_{A}}N_{B}\\  {_{B}}M_{A} & B \\\end{smallmatrix}\right)$
be a Morita context ring which is an Artin algebra and the bimodule morphisms are zero. Assume that $M$ is a projective left $B$-module and $\mathsf{gl.dim}A<\infty$. If right $B$-module $N$ has a $B$-tight projective $\alg$-resolution and $\mathsf{pd}N_{B}<\infty$
, then
$$\IT(A)\leq\IT(\alg)\leq\IT(A)+2.$$
\end{corollary}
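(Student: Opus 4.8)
The plan is to derive both inequalities from Corollary~\ref{corollary:main}(1), applied to the recollement of abelian categories $(\modA,\modL,\modB)$ of Example~\ref{ex:Morita ring}, namely the one induced by the idempotent $e=\left(\begin{smallmatrix}0&0\\0&1\end{smallmatrix}\right)\in\Lambda$, for which $e\Lambda e=B$ and $\Lambda/\Lambda e\Lambda\cong A$. Invoking that corollary requires checking its three hypotheses: that $\mathsf{Proj}\,\modL=\add(P)$ for some projective object $P$; that the left adjoint $\mathsf{l}$ of the quotient functor $\mathsf{U}_{B}$ is exact; and that the relative global dimension $\mathsf{gl.dim}_{\modA}\modL$ is finite.

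The first is immediate, with $P=\Lambda$. For the second, comparing Examples~\ref{idempotent} and~\ref{ex:Morita ring} identifies the left adjoint $\mathsf{l}$ with $\mathsf{T}_{B}\simeq-\otimes_{B}e\Lambda$; as a left $B=e\Lambda e$-module one has $e\Lambda\cong{}_{B}B\oplus{}_{B}M$, so the assumption that ${}_{B}M$ is projective makes $e\Lambda$ a projective, hence flat, left $B$-module, whence $\mathsf{T}_{B}$ is exact. (Alternatively, in the tuple description $\mathsf{T}_{B}(Y)=(Y\otimes_{B}M,Y,0,1)$ only the first coordinate can obstruct left exactness, and $-\otimes_{B}M$ is exact as soon as ${}_{B}M$ is flat.) For the third, since $\mathsf{i}=\mathsf{Z}_{A}$ sends $X\in\modA$ to $(X,0,0,0)$, one has $\mathsf{gl.dim}_{\modA}\modL=\sup\{\mathsf{pd}(X,0,0,0)_{\Lambda}\mid X\in\modA\}$; and because $N$, regarded as a right $B$-module, has a $B$-tight projective $\Lambda$-resolution, Lemma~\ref{lem:Morita ring2} bounds $\mathsf{pd}(X,0,0,0)_{\Lambda}\le\mathsf{pd}X_{A}+\mathsf{pd}N_{B}\le\mathsf{gl.dim}A+\mathsf{pd}N_{B}$ for every $X$, so the two finiteness hypotheses give $\mathsf{gl.dim}_{\modA}\modL\le\mathsf{gl.dim}A+\mathsf{pd}N_{B}<\infty$.

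With all three hypotheses verified, Corollary~\ref{corollary:main}(1) applies and yields $\IT(A)\le\IT(\Lambda)\le\IT(A)+2$, as asserted. The only step carrying genuine content is the finiteness of $\mathsf{gl.dim}_{\modA}\modL$, which is precisely the point that Lemma~\ref{lem:Morita ring2} is tailored to deliver under the stated hypotheses on $N$; the projectivity of ${}_{B}M$ is used only to secure the exactness of $\mathsf{l}$, the remaining standing hypothesis of Corollary~\ref{corollary:main}, and the rest is bookkeeping with the functors of Example~\ref{ex:Morita ring}. In outline the argument is the same as for the symmetric Morita context ring treated in the preceding corollary, with Lemma~\ref{lem:Morita ring2} playing the role that Lemmas~\ref{lem:Morita ring} and~\ref{lem:Morita ring1} played there.
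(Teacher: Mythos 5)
Your proposal is correct and follows essentially the same route as the paper: verify the hypotheses of Corollary~\ref{corollary:main}(1) for the recollement of Example~\ref{ex:Morita ring}, using Lemma~\ref{lem:Morita ring2} together with $\mathsf{gl.dim}A<\infty$ and $\mathsf{pd}N_{B}<\infty$ to get $\mathsf{gl.dim}_{\modA}\modL<\infty$. You are in fact slightly more careful than the paper, which leaves the exactness of $\mathsf{l}=\mathsf{T}_{B}$ (coming from the projectivity of ${}_{B}M$) implicit.
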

\begin{proof}
Since  $N$ has a $B$-tight projective $\alg$-resolution, it follows from Lemma \ref{lem:Morita ring2}
that $$\sup\{\mathsf{pd}(X,0,0,0)_{\alg}\mid X\in \modA\}\leq\mathsf{gl.dim}A+\mathsf{pd}N_{B}<\infty.$$
Therefore by Corollary \ref{corollary:main}(1) we get inequalities $\IT(A)\leq\IT(\alg)\leq\IT(A)+2.$
\end{proof}

\begin{corollary}\label{application:Morita ring}
Let $\alg =\left(\begin{smallmatrix}  A & {_{A}}N_{B}\\  {_{B}}M_{A} & B \\\end{smallmatrix}\right)$
	be a Morita context ring which is an Artin algebra and the bimodule morphisms are zero. Assume that $M$ is a projective left $B$-module. Then the following statements hold:
\begin{enumerate}
\item Assume that $A$ is an $(m,r)$-Igusa--Todorov algebra algebra and $B$ is an $(n,s)$-Igusa--Todorov algebra. If $N$ is a projective right $B$-module and $N\otimes_{B}M=0$, then $\alg$ is an $(2m+n+2,\max\{r,s\})$-Igusa--Todorov algebra.

\item If $N$ is a projective right $B$-module and $N\otimes_{B}M=0$, then
$$\IT(B)\leq\IT(\alg)\leq 2\IT(A)+\IT(B)+2.$$
\end{enumerate}
\end{corollary}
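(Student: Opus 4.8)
The plan is to realize this as a direct application of the recollement results from Section 3 to the recollement
$$\scalebox{0.85}{\xymatrix{ \modA\ar[rr]^{\mathsf{Z}_{A}} && \modL\ar@/_2.0pc/[ll]_{\mathsf{C}_{A}}\ar@/^2.0pc/[ll]^{\mathsf{K}_{A}}\ar[rr]^{\mathsf{U}_{B}} && \modB \ar@/_2.0pc/[ll]_{\mathsf{T}_{B}}\ar@/^2.0pc/[ll]^{\mathsf{H}_{B}} }}$$
associated to the Morita context ring $\Lambda$. In the notation of Theorems \ref{main theorem} and \ref{main theorem1}, we have $\mathscr{A}=\modA$, $\mathscr{B}=\modL$, $\mathscr{C}=\modB$, with $\mathsf{l}=\mathsf{T}_{B}$, $\mathsf{p}=\mathsf{K}_{A}$, and $\mathsf{q}=\mathsf{C}_{A}$. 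First I would verify the two exactness hypotheses needed. The functor $\mathsf{T}_{B}(Y)=(Y\otimes_{B}M,Y,0,1)$ is exact precisely when $-\otimes_{B}M$ is exact, i.e.\ when $M$ is a projective (equivalently flat) left $B$-module; this is exactly the standing hypothesis of the corollary, so $\mathsf{l}$ is exact. Next, by the two ``facts'' recorded just before Lemma \ref{lem:Morita ring} in the paper, $\mathsf{K}_{A}=\mathsf{p}$ is exact if and only if $N_{B}$ is projective and $N\otimes_{B}M=0$; this is precisely the hypothesis imposed in both parts (1) and (2), so $\mathsf{p}$ is exact as well. I should also check that $\mathsf{Proj}\modL=\add(P)$ for a single projective object $P$, which holds because $\Lambda$ is an Artin algebra, so $P=\Lambda_{\Lambda}$ works; this is the blanket hypothesis of the Section 3 results and is automatic here.

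With $\mathsf{l}$ and $\mathsf{p}$ exact, part (1) follows immediately: assuming $A$ is an $(m,r)$-Igusa-Todorov algebra (so $\modA$ is an $(m,r)$-Igusa-Todorov category) and $B$ is an $(n,s)$-Igusa-Todorov algebra (so $\modB$ is an $(n,s)$-Igusa-Todorov category), Theorem \ref{main theorem}(1) gives that $\modL$ is a $(2m+n+2,\max\{r,s\})$-Igusa-Todorov category, which by the remark identifying $(m,n)$-Igusa-Todorov categories with $(m,n)$-Igusa-Todorov algebras means $\Lambda$ is a $(2m+n+2,\max\{r,s\})$-Igusa-Todorov algebra. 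For part (2), the upper bound $\IT(\Lambda)\leq 2\IT(A)+\IT(B)+2$ is exactly Corollary \ref{main theorem1}(1) applied to this recollement (again using $\mathsf{l}$, $\mathsf{p}$ exact), translated via $\IT(\modA)=\IT(A)$, $\IT(\modB)=\IT(B)$, $\IT(\modL)=\IT(\Lambda)$. The lower bound $\IT(B)\leq\IT(\Lambda)$ is Proposition \ref{prop2}, which needs only $\mathsf{l}$ exact and $\mathsf{Proj}\modL=\add(P)$, both already established.

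The bulk of the work is essentially bookkeeping: unwinding the functor definitions of Example \ref{ex:Morita ring} to confirm which adjoint functor in the recollement plays the role of $\mathsf{l}$, $\mathsf{p}$, $\mathsf{q}$, and then matching the exactness conditions on $M$ and $N$ to the hypotheses of Theorems \ref{main theorem}, \ref{main theorem1} and Proposition \ref{prop2}. There is no real obstacle here; the only point requiring a small amount of care is the criterion for exactness of $\mathsf{K}_{A}$, but this is already recorded in the text as one of the two ``facts'' (derived from the decomposition $\mathsf{K}_{A}\simeq\Hom_{\Lambda}((M,B,0,1),-)\oplus\Hom_{\Lambda}((0,N,0,0),-)$), so I would simply cite it. Thus the corollary is a routine specialization, and I would present it concisely by (i) identifying the recollement and its functors, (ii) checking $\mathsf{l}$ and $\mathsf{p}$ are exact under the stated hypotheses, and (iii) invoking Theorem \ref{main theorem}(1) for (1) and Corollary \ref{main theorem1}(1) together with Proposition \ref{prop2} for (2).
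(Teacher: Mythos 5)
Your proposal is correct and follows essentially the same route as the paper: verify that $\mathsf{l}=\mathsf{T}_{B}$ and $\mathsf{p}=\mathsf{K}_{A}$ are exact under the stated hypotheses on $M$ and $N$, then apply Theorem \ref{main theorem}(1) for part (1) and the combination of Corollary \ref{main theorem1}(1) with Proposition \ref{prop2} (which is exactly Corollary \ref{corollary:main}(2)) for part (2). The only cosmetic difference is that the paper deduces the exactness of $\mathsf{K}_{A}$ via Remark \ref{rem:tight} on $B$-tight projectives rather than by directly citing the recorded exactness criterion, but these are the same fact.
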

\begin{proof}
Since $N$ is a projective right $B$-module and $N\otimes_{B}M=0$, it follows from Remark \ref{rem:tight} that $(0,N,0,0)$ is a $B$-tight projective $\alg$-module and so $\mathsf{K}_{A}$ is exact.
Hence this corollary follows from Example \ref{ex:Morita ring}, Theorem \ref{main theorem}(1) and Corollary \ref{corollary:main}(2).
\end{proof}

By Corollary \ref{corollary:main} and Example \ref{ex:Morita ring}, we get the following result.
\begin{corollary}\label{corollary:triangular matrix ring}
Let $\alg =\left(\begin{smallmatrix}  A & {_{A}}N_{B}\\  0 & B \\\end{smallmatrix}\right)$
	be a triangular matrix Artin algebra. Assume that $N$ is a projective right  $B$-module. Then the following statements hold:
\begin{enumerate}
\item If $A$ is an $(m,r)$-Igusa--Todorov algebra algebra and $B$ is an $(n,s)$-Igusa--Todorov algebra, then $\alg$ is a $(m+n+1,\max\{r,s\})$-Igusa--Todorov algebra.

\item  If $\mathsf{gl.dim}A<\infty$, then
$$\IT(B)\leq\IT(\alg)\leq\IT(B)+1.$$
\item There are inequalities
$$\IT(B)\leq\IT(\alg)\leq \IT(A)+\IT(B)+1.$$
In particular, if both $A$ and $B$ are syzygy-finite algebras, then so is $\alg.$

\end{enumerate}
\end{corollary}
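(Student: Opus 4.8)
The plan is to view $\Lambda=\left(\begin{smallmatrix} A & {_{A}}N_{B}\\ 0 & B\end{smallmatrix}\right)$ as the Morita context ring of Example~\ref{ex:Morita ring} with $M=0$ and to feed the recollement $(\modA,\modL,\modB)$ produced there into the results of Section~3. First I would record the exactness of the relevant functors. In the notation of \S\ref{Section2.2} one has $\mathsf{i}=\mathsf{Z}_{A}$, $\mathsf{e}=\mathsf{U}_{B}$, $\mathsf{q}=\mathsf{C}_{A}$, $\mathsf{p}=\mathsf{K}_{A}$ and $\mathsf{l}=\mathsf{T}_{B}$; since $M=0$ the functor $\mathsf{C}_{A}=\mathsf{q}$ is exact, and $\mathsf{T}_{B}=\mathsf{l}$ (which sends $Y$ to $(0,Y,0,0)$) is exact because kernels and cokernels in $\modL$ are computed componentwise; since moreover $N_{B}$ is projective and $N\otimes_{B}M=0$, the functor $\mathsf{K}_{A}=\mathsf{p}$ is exact by the facts recorded just before Lemma~\ref{lem:Morita ring}. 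Also $\mathsf{Proj}\,\modL=\add(\Lambda)$, so the standing hypotheses of Section~3 are met.

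With $\mathscr{A}=\modA$, $\mathscr{C}=\modB$ and $\mathscr{B}=\modL$, item~(1) is immediate from Theorem~\ref{main theorem}(2) ($\mathsf{l},\mathsf{p},\mathsf{q}$ exact, $\mathscr{A}$ an $(m,r)$- and $\mathscr{C}$ an $(n,s)$-Igusa-Todorov category): $\modL$ is an $(m+n+1,\max\{r,s\})$-Igusa-Todorov category, i.e.\ $\Lambda$ is an $(m+n+1,\max\{r,s\})$-Igusa-Todorov algebra. The main inequality of item~(3) is the ``moreover'' part of Corollary~\ref{corollary:main}(2) (using $\mathsf{p},\mathsf{q}$ exact), and the uniform lower bound $\IT(B)\le\IT(\Lambda)$ is Proposition~\ref{prop2} (using $\mathsf{l}$ exact). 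For item~(2) the only extra input is $\mathsf{gl.dim}_{\modA}\modL<\infty$: since $N_{B}$ is projective and $N\otimes_{B}M=0$, Remark~\ref{rem:tight}(2) shows $(0,N,0,0)$ is already a $B$-tight projective $\Lambda$-module, so $N_{B}$ has a $B$-tight projective $\Lambda$-resolution of length $0$; Lemma~\ref{lem:Morita ring2} then gives $\mathsf{pd}(X,0,0,0)_{\Lambda}\le\mathsf{pd}X_{A}+\mathsf{pd}N_{B}=\mathsf{pd}X_{A}\le\mathsf{gl.dim}A$ for every $X\in\modA$, whence $\mathsf{gl.dim}_{\modA}\modL\le\mathsf{gl.dim}A<\infty$, and Corollary~\ref{corollary:main}(1) (``moreover'' part, $\mathsf{q}$ exact) yields $\IT(B)\le\IT(\Lambda)\le\IT(B)+1$.

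It remains to treat the last sentence of item~(3), which is the step I expect to require real care. If $A$ and $B$ are syzygy-finite then $\IT(A)=\IT(B)=0$, so the inequality just proved gives $\IT(\Lambda)\le1$; to upgrade this to syzygy-finiteness of $\Lambda$ I would argue at the level of syzygies. Fix $t=\max\{r,s\}$ with $\Omega^{t}_{A}(\modA)\subseteq\add(V_{A})$ and $\Omega^{t}_{B}(\modB)\subseteq\add(V_{B})$. For $L\in\modL$, Lemma~\ref{lem of recollement}(6) (using $\mathsf{q}$ exact) gives $0\to\mathsf{le}(L)\to L\to\mathsf{iq}(L)\to0$, and the horseshoe lemma then places $\Omega^{t}(L)$, up to a projective summand, in a short exact sequence $0\to\Omega^{t}(\mathsf{le}(L))\to M\to\Omega^{t}(\mathsf{iq}(L))\to0$. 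Now $\mathsf{T}_{B}$ carries projectives to projectives and radical morphisms to radical morphisms (since $\mathrm{rad}_{\Lambda}\mathsf{T}_{B}(Q)=\mathsf{T}_{B}(\mathrm{rad}_{B}Q)$ for $Q$ projective), so $\mathsf{T}_{B}$ of a minimal projective resolution of $\mathsf{U}_{B}L$ is a minimal projective resolution of $\mathsf{le}(L)=\mathsf{T}_{B}(\mathsf{U}_{B}L)$, whence $\Omega^{t}(\mathsf{le}(L))=\mathsf{T}_{B}(\Omega^{t}_{B}(\mathsf{U}_{B}L))\in\add(\mathsf{T}_{B}(V_{B}))$; and Lemma~\ref{lem-exact-syzygy} applied to the exact, projective-preserving $\mathsf{i}=\mathsf{Z}_{A}$ gives $\Omega^{t}(\mathsf{iq}(L))\in\add(\mathsf{Z}_{A}(V_{A})\oplus\Lambda)$. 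Finally $\mathrm{Ext}^{1}_{\Lambda}(\mathsf{Z}_{A}(V_{A}),\mathsf{T}_{B}(V_{B}))\cong\mathrm{Ext}^{1}_{\modA}(V_{A},\mathsf{K}_{A}\mathsf{T}_{B}(V_{B}))=0$ (because $\mathsf{i}$ is exact with exact right adjoint $\mathsf{p}=\mathsf{K}_{A}$ and $\mathsf{K}_{A}\mathsf{T}_{B}=0$) while $\mathrm{Ext}^{1}_{\Lambda}(\Lambda,-)=0$, so the above short exact sequence splits; hence $\Omega^{t}(L)\in\add(\mathsf{Z}_{A}(V_{A})\oplus\mathsf{T}_{B}(V_{B})\oplus\Lambda)$ for every $L$, i.e.\ $\Lambda$ is $t$-syzygy-finite. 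Controlling this splitting — equivalently, showing that the iterated syzygy genuinely decomposes along the recollement rather than merely that $\IT(\Lambda)\le1$ — is the only point not handed to us directly by the bounds of Section~3.
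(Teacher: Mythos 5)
Your route for parts (1), (2) and the displayed inequality in (3) is exactly the one the paper intends (its proof is the single sentence invoking Corollary \ref{corollary:main} and Example \ref{ex:Morita ring}), and your treatment of part (2) --- deducing $\mathsf{gl.dim}_{\modA}\modL<\infty$ from Lemma \ref{lem:Morita ring2} and then applying the ``moreover'' clause of Corollary \ref{corollary:main}(1) with $\mathsf{l}=\mathsf{T}_B$ and $\mathsf{q}=\mathsf{C}_{A}$ exact --- is correct, as is the lower bound $\IT(B)\leq\IT(\Lambda)$ via Proposition \ref{prop2}. The gap is the step ``$\mathsf{p}=\mathsf{K}_{A}$ is exact because $N_{B}$ is projective and $N\otimes_{B}M=0$''. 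That assertion is false (the ``fact'' recorded before Lemma \ref{lem:Morita ring} characterizes exactness of $\Hom_{\Lambda}(\Lambda e\Lambda,-)$, whereas the right adjoint of the inclusion is $\Hom_{\Lambda}(\Lambda/\Lambda e\Lambda,-)$). Exactness of the right adjoint $\mathsf{p}$ of $\mathsf{i}=\mathsf{Z}_{A}$ is equivalent to $\mathsf{Z}_{A}$ preserving projectives, and $\mathsf{Z}_{A}(A)=(A,0,0,0)$ is never projective when $N\neq 0$: its projective cover is $e_{1}\Lambda=(A,N,1,0)$ with kernel $(0,N,0,0)$. Concretely, for $\Lambda=\left(\begin{smallmatrix}k&k\\0&k\end{smallmatrix}\right)$ the functor $\mathsf{K}_{A}=\ker\widetilde{f}$ sends the epimorphism $(k,k,1)\twoheadrightarrow(k,0,0)$ to $0\to k$. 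Hence Theorem \ref{main theorem}(2) and Corollary \ref{corollary:main}(2) cannot be invoked here, and your derivations of part (1) and of the upper bound in part (3) do not go through. (This defect is inherited from the paper itself; note \cite[Corollary 6.18]{ZZ1} reaches the triangular-matrix bound by a different argument.)

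You are right that the final sentence of (3) does not follow from $\IT(\Lambda)\leq\IT(A)+\IT(B)+1$, which only yields $\IT(\Lambda)\leq 1$, i.e.\ that $\Lambda$ is an Igusa-Todorov algebra; identifying this is to your credit. But your supplementary argument breaks at the same point. You again assert that $\mathsf{i}=\mathsf{Z}_{A}$ preserves projectives in order to apply Lemma \ref{lem-exact-syzygy} to $\Omega^{t}(\mathsf{iq}(L))$, which is false as above (indeed $\Omega_{\Lambda}(X,0,0)=(\Omega_{A}X,P\otimes_{A}N,\iota)$ is not $\mathsf{Z}_{A}(\Omega_{A}X)$ up to projectives). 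And the claimed vanishing $\mathrm{Ext}^{1}_{\Lambda}(\mathsf{Z}_{A}(V_{A}),\mathsf{T}_{B}(V_{B}))=0$ is also false: extensions of $(X,0,0)$ by $(0,Y,0)$ are precisely the modules $(X,Y,f)$, and for instance $\mathrm{Ext}^{1}_{\Lambda}(S_{1},S_{2})\neq 0$ when $\Lambda=\left(\begin{smallmatrix}k&k\\0&k\end{smallmatrix}\right)$. So the short exact sequence you construct has no reason to split, and the syzygy-finiteness of $\Lambda$ is not established by this argument.
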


\begin{remark}
One can compare Corollary \ref{corollary:triangular matrix ring} with \cite[Corollary 6.18]{ZZ1}. However, our methods used here are different from those in \cite{ZZ1}.
\end{remark}

\section*{Acknowledgments}
\noindent
The authors would like to thank Yongyun Qin for questions and corrections that improved the paper.

\def\Up{\mathrm{U}}
\def\Down{\mathrm{D}}
\def\e{\varepsilon}
\def\modcat{\mathrm{mod}\text{-}}
\def\I{\mathrm{I}}
\def\II{\mathrm{II}}
\def\rad{\mathrm{rad}}
\def\soc{\mathrm{soc}}
\def\lineardim{\mathrm{dim}_{k}}

\bigskip

% 1
\noindent\textbf{Peiru Yang} \\
School of Mathematics and Statistics, Northeast Normal University, Changchun, 730070, Jilin Province, P. R. China. \\
E-mail: \textsf{yangprlife@163.com}

\noindent\textbf{Yajun Ma} \\
School of Mathematics and Physics, Lanzhou Jiaotong University, Lanzhou, 730070, Gansu Province, P. R. China. \\
E-mail: \textsf{yjma@mail.lzjtu.cn}

\noindent\textbf{Yu-Zhe Liu} \\
School of Mathematics and statistics, Guizhou University, 550025 Guiyang, Guizhou, P. R. China. \\
E-mail: \textsf{liuyz@gzu.edu.cn / yzliu3@163.com}

\end{document}